 \numberwithin{equation}{section}
\theoremstyle{plain}
   \newtheorem{theorem}{Theorem}[section]
   \newtheorem{proposition}[theorem]{Proposition}
   \newtheorem{lemma}[theorem]{Lemma}
   \newtheorem{corollary}[theorem]{Corollary}
   \newtheorem{problem}{Problem}
   \newtheorem{conjecture}[theorem]{Conjecture}
\theoremstyle{definition}
   \newtheorem{definition}{Definition}[section]
   \newtheorem{example}[theorem]{Example}
\theoremstyle{remark}
   \newtheorem{remark}[theorem]{Remark}
\author[P.~Br\"and\'en]{Petter Br\"and\'en}
\address{Department of Mathematics, Royal Institute of Technology, SE-100 44 Stockholm, Sweden}
\email{pbranden@kth.se}
\thanks{The first author is a Royal Swedish Academy of Sciences Research Fellow
  supported by a grant from the Knut and Alice Wallenberg
  Foundation. The research is also supported by the G\"oran Gustafsson Foundation.}
\author[M.~Chasse]{Matthew Chasse}
\address{Department of Mathematics, Royal Institute of Technology, SE-100 44 Stockholm, Sweden}
\email{chasse@kth.se}
\keywords{location of zeros, strip, linear operators, stable polynomials, P\'olya-Schur theory}
\subjclass[2010]{
 Primary: 47B38; Secondary: 30C15, 26C10, 42A38, 32A60}
\def\kkk{\kern.2ex\mbox{\raise.5ex\hbox{{\rule{.35em}{.12ex}}}}\kern.2ex}
\newcommand{\LP}{\mathscr{L{\kkk}P}}
\newcommand{\NN}{\mathbb{N}}
\newcommand{\PP}{\mathcal{P}}
\newcommand{\PPP}{\overline{\mathcal{P}}}
\renewcommand{\SS}{\mathcal{S}}
\newcommand{\SSS}{\overline{\mathcal{S}}}
\newcommand{\FF}{\mathscr{F}}
\newcommand{\RR}{\mathbb{R}}
\newcommand{\CC}{\mathbb{C}}
\newcommand{\KK}{\mathbb{K}}
\def\HD{\mathscr{D}_\mu}
\def\HDD{\overline{\mathscr{D}}_\mu}
\def\PPD{\mathcal{P}_\mu}
\renewcommand{\Im}{{\rm Im}}
\renewcommand{\Re}{{\rm Re}}
\def\newop#1{\expandafter\def\csname #1\endcsname{\mathop{\rm
#1}\nolimits}}
\newcommand{\beq}{\begin{equation}}  
\newcommand{\eeq}{\end{equation}}
\begin{document}

\title[Strip preservers]{Classification theorems for operators preserving zeros in a strip}

\begin{abstract}
We characterize all linear operators which preserve certain spaces of entire functions whose zeros lie in a closed strip.  Necessary and sufficient conditions are obtained for the related problem with real entire functions, and some classical theorems of de Bruijn and P\'olya are extended.  Specifically, we reveal new differential operators which map real entire functions whose zeros lie in a strip into real entire functions whose zeros lie in a narrower strip; this is one of the properties that characterize a ``strong universal factor'' as defined by de Bruijn.  Using elementary methods, we prove a theorem of de Bruijn and extend a theorem of de Bruijn and Ilieff which states a sufficient condition for a function to have a Fourier transform with only real zeros.    
\end{abstract}

\maketitle

\tableofcontents

\newpage

\section{Introduction and main results}
In 1950, a paper of N.~G.~de~Bruijn \cite{D} connected operators which contract the zeros of real entire functions towards the real axis and Fourier transforms which have only real zeros.  Heuristically, an eigenfunction of an operator which contracts zeros towards the real axis will have only real zeros, provided its zeros can be shown to lie in some strip.  Using this principle, de~Bruijn found sufficient conditions for a Fourier transform to have only real zeros, extending work by P\'olya \cite{P2}.  We address two connected questions: (1) Which linear operators preserve the set of entire functions whose zeros lie in a strip? and (2) Which linear operators contract the zeros of a real entire function closer to the real axis?

A fundamental notion in complex analysis is that of linear transformations which preserve spaces of functions whose zeros lie in a given subset of the complex plane.  Stability and zero localization techniques have been applied, for example, to establish log-concavity in combinatorics \cite{brenti}, total positivity in matrix theory \cite{schoenberg4}, and to describe phase transitions in statistical mechanics \cite{BB2,LS}.  Recently, the Kadison--Singer problem, which was open for more than fifty years, was resolved using zero localization methods \cite{MSS}.  It is well-known that the Riemann hypothesis is equivalent to the statement that the Riemann $\xi$-function, as originally defined \cite{P1}, has only real zeros.  Results concerning the zero loci of the Riemann $\xi$-function have therefore been pursued with sustained interest \cite{CC3,CSV,D,KK,KKL,LM,N,SGD}.  
A theme present in the work E. Laguerre, G. P\'olya \cite{P}, J. Schur \cite{PS}, and others, was stated in a succinct form by T. Craven and G. Csordas as an open problem \cite{CC3} (modified below for clarity).

\begin{problem}\label{gen-stability-problem}
Let $\SS\subset\CC[z]$ \textup{(}or $\SS\subset\RR[z]$\textup{)} be a set of polynomials which have zeros only in the set $\Omega\subset\CC$. 
Characterize all linear operators $T:\SS\to \SS\cup\{0\}$.
\end{problem}

For $\Omega=\RR$, P\'olya and Schur characterized the operators $T:\SS\to \SS\cup\{0\}$ of the form $T[x^k]=\gamma_k x^k$, $\gamma_k\in\RR$, $k=0,1,2,\dots$ \cite{PS}.  Craven and Csordas \cite{CCFW, CC3, CCGL} extended the theory of P\'olya--Schur, while posing a number of formidable open problems, including Problem \ref{gen-stability-problem}.   J. Borcea and the first author solved Problem \ref{gen-stability-problem} in the case that $\Omega$ is either a closed circular domain, a circle, or a line \cite{BB1}.  This result was subsequently extended to multivariate polynomials \cite{BB2}, and then to transcendental entire functions \cite{B2}.  For open circular domains, a characterization has recently been given by E. Melamud \cite{M}.  Problem \ref{gen-stability-problem} has remained open in the important cases that $\Omega$ is a (closed or open) strip, convex sector, or double sector.  Here we continue the classification program for the strip case, and extend work of de Bruijn \cite{D} on the zeros of trigonometric integrals. 
We completely solve Problem \ref{gen-stability-problem} for complex polynomials when $\Omega$ is a prescribed closed strip in the complex plane (Theorems \ref{strip} and \ref{trans-strip-char}).  This is arguably the most important case of Problem \ref{gen-stability-problem} which goes beyond circular domains.  A characterization for the open strip follows from the observations of Melamud.  Problem \ref{gen-stability-problem} is consequently solved when $\Omega$ is any image of the open or closed strip under a M\"obius transformation (Corollary \ref{cor-mobius-n}).

For questions pertaining to the Riemann hypothesis, solutions to Problem \ref{gen-stability-problem}, where the set $\Omega$ is a strip, are especially relevant.   
We investigate linear operators $T$ which preserve the set of real polynomials whose zeros lie in a strip, and give an equivalent condition that $T$ preserves a larger set of polynomials (Theorem \ref{real-strip-h-equiv}).  These results are applied to give sufficient conditions for differential operators to contract the zeros of real entire functions towards the real axis, as we describe below.

Suppose a linear operator $T$ is defined by 
\[
T\left(\int_{-\infty}^\infty F(t)e^{izt}dt\right)=\int_{-\infty}^\infty S(t)F(t)e^{izt}dt,
\]
where $S$ is a real entire function of order at most two. 
In \cite{D}, de~Bruijn applies the term \emph{strong universal factor} to a function $S$ if the associated operator $T$ has two properties when acting on the space of entire functions which possess Fourier transforms:

Let $f$ be an entire function which possesses a Fourier transform.
\renewcommand\theenumi{\Roman{enumi}} 
\begin{enumerate}\itemsep 2pt
\item \label{su1}
If the zeros of $f(z)$ lie in the strip $|\Im~z|\le\mu$, then the zeros of 
$T(f(z))$ lie in a strip $|\Im~z|<\mu'$, where $\mu'<\mu$ is independent of $f(z)$.
\item \label{su2}
If for any $\varepsilon>0$, all but a finite number of the zeros of $f(z)$ lie in the strip $\{z:|\Im~z|\le\varepsilon\}$, then the function $T(f(z))$ has only a finite number of non-real zeros.
\end{enumerate} 
\renewcommand\theenumi{\arabic{enumi}}
It was proved by de~Bruijn that the factor $S(t)=\exp(\lambda t^2)$, $\lambda>0$, corresponds to an operator which satisfies property \eqref{su1}, and more than fifty years later, Ki and Kim proved condition \eqref{su2} holds \cite{KK}.  We prove sharp sufficient conditions for large classes of differential operators which have property \eqref{su1} of strong universal factors (Theorems \ref{stab-strip-entire} and \ref{integral-shrink-entire}); it remains to be determined whether these operators also have property \eqref{su2}.  Afterwards, we return to the original motivation in de~Bruijn's paper \cite{D} and investigate kernels which have Fourier transforms with only real zeros.

We now state the main results along with the required notation.
Let $\Omega \subseteq \CC^m$. A polynomial $p(z_1,\ldots, z_m)\in \CC[z_1,\ldots, z_m]$ is \emph{$\Omega$-stable} if 
$$
z \in \Omega \mbox{ implies } p(z) \neq 0.
$$
\noindent
Let $\CC_n[z_1,z_2,\ldots,z_m]$ be the space of all polynomials in $\CC[z_1,z_2,\ldots,z_m]$ of degree at most $n$, let $\PP_n(\Omega)$ be the set of $\Omega$-stable polynomials in $\CC_n[z_1,z_2,\ldots,z_m]$, and let $\PP(\Omega) = \bigcup_{n=0}^\infty \PP_n(\Omega)$.  A linear operator $T$ is said to \emph{preserve $\Omega$-stability} if  $T(\PP(\Omega))\subseteq \PP(\Omega)\cup\{0\}$.  For notational convenience the symbols  $\PP_n(\Omega)$ and $\PP(\Omega)$ are used with the number of variables implicitly defined by the dimension of $\Omega$, and the space of univariate complex polynomials is denoted $\CC[z]$.
The next theorem reduces Problem \ref{gen-stability-problem} when $\mathcal{S}$ is a closed strip to the case when $\mathcal{S}$ is a half--plane. This yields algebraic and transcendental characterizations of strip preservers for polynomials (Theorem \ref{trans-strip-char}), and for transcendental entire functions (Theorem~\ref{entire-strip}).

\begin{theorem}\label{strip}
Let $H_1$ and $H_2$ be two parallel open half-planes in $\CC$ such that $\CC \setminus (H_1\cup H_2)$ is a closed strip with non-empty interior, and let $T : \CC_n[z] \rightarrow \CC[z]$. Then $T$ preserves $H_1 \cup H_2$-stability if and only if either
\renewcommand\theenumi{\roman{enumi}}
\begin{enumerate}
\item
$T$ has range of dimension at most one and is of the form
\[
T[f]=\alpha(f)p \text{ for } f\in\CC_n[z]
\]
where $\alpha:\CC_n[z]\to\CC$ is a linear functional and $p\in\PP(H_1\cup H_2)$; or

\item
$T$ satisfies \textup{(}a\textup{)} and \textup{(}b\textup{)} below. 
\begin{enumerate}
\item $T : \PP_n(H_1) \rightarrow \PP(H_1) \cup\{0\}$ or $T : \PP_n(H_2) \rightarrow \PP(H_1) \cup\{0\}$; 
\label{H1-stability}
\item $T : \PP_n(H_1) \rightarrow \PP(H_2) \cup\{0\}$ or $T : \PP_n(H_2) \rightarrow \PP(H_2) \cup\{0\}$.
\label{H2-stability}
\end{enumerate}
\end{enumerate}
\end{theorem}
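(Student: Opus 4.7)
Both cases rely on the identity $\PP(H_1\cup H_2)=\PP(H_1)\cap\PP(H_2)$, which is immediate from the definition of $\Omega$-stability. In case (i), every nonzero image of $T$ is a scalar multiple of the strip-stable polynomial $p$, hence strip-stable. In case (ii), if $f\in\PP_n(H_1\cup H_2)=\PP_n(H_1)\cap\PP_n(H_2)$, then whichever disjunct of (a) holds places $T(f)\in\PP(H_1)\cup\{0\}$, whichever disjunct of (b) holds places $T(f)\in\PP(H_2)\cup\{0\}$, and intersecting yields $T(f)\in\PP(H_1\cup H_2)\cup\{0\}$.

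\textbf{Forward direction.} Suppose $T$ preserves $H_1\cup H_2$-stability. If $\dim\operatorname{range}(T)\le 1$, write $T[f]=\alpha(f)p$. The polynomials $(z-z_0)^k$ with $z_0$ in the open strip and $0\le k\le n$ form a basis of $\CC_n[z]$ and are strip-stable, so some strip-stable $f_0$ has $\alpha(f_0)\ne 0$; then $T(f_0)$ is a nonzero scalar multiple of $p$ lying in $\PP(H_1\cup H_2)$ by strip-preservation, forcing $p\in\PP(H_1\cup H_2)$ and placing us in (i). Henceforth assume $\dim\operatorname{range}(T)\ge 2$ and aim to establish (ii). By the symmetry $H_1\leftrightarrow H_2$ (which permutes the two disjuncts of (a) and the two disjuncts of (b) simultaneously), it suffices to prove (a). Suppose for contradiction that both disjuncts of (a) fail, producing $p_1\in\PP_n(H_1)$ and $p_2\in\PP_n(H_2)$ such that $T(p_1)$ and $T(p_2)$ each vanish somewhere in $H_1$. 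The goal is to manufacture a strip-stable $q\in\PP_n(H_1\cup H_2)$ for which $T(q)$ still vanishes in $H_1$, contradicting strip-preservation.

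\textbf{Main obstacle.} Constructing $q$ is the technical heart of the argument: neither $p_1$ nor $p_2$ is strip-stable, and their linear combinations are generally not strip-stable either. My plan is to combine a continuous root-deformation with Hurwitz's theorem. Slide the roots of $p_1$ lying below the strip (in $\{\Im z<-\mu\}$ after normalizing) vertically upward into the closed strip, tracing a continuous path in $\PP_n(H_1)$ from $p_1$ to a strip-stable endpoint; simultaneously exploit the rank-at-least-two hypothesis together with the witness $p_2$ to set up a two-parameter family of candidates in which the image under $T$ can be forced to vanish at a prescribed point $z^{*}\in H_1$; then apply Hurwitz to a suitable one-parameter slice of this family to locate a strip-stable member whose image still vanishes at a point of $H_1$. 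The delicate step is synchronizing the two motions so that the tracked zero of $T(\cdot)$ does not escape $H_1$ before the polynomial itself enters $\PP_n(H_1\cup H_2)$; I expect this is where the argument requires the finest analysis, possibly appealing to the Borcea--Br\"and\'en half-plane classification as a subsidiary tool to control the boundary behavior.
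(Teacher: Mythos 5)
Your backward direction and your handling of the rank-at-most-one case in the forward direction are both fine and match the paper. The problem is the forward direction for $\dim\operatorname{range}(T)\ge 2$, which is where all the content of the theorem lives, and your proposal leaves it as a sketch rather than a proof.

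You correctly reduce to proving (a), and correctly identify the target: if both disjuncts of (a) fail, exhibit a strip-stable $q$ with $T(q)$ vanishing in $H_1$. But the construction you outline --- deforming $p_1$'s roots upward while ``exploiting'' $p_2$ through a two-parameter family and ``synchronizing the two motions'' so the tracked zero doesn't escape $H_1$ --- is exactly the part you flag as the delicate step, and you don't carry it out. There is no argument given that such a synchronization is possible, and a naive deformation of $p_1$ alone plainly can fail (the zero of $T(p_1(t))$ can exit $H_1$ before $p_1(t)$ enters the strip). Acknowledging the obstacle is not the same as overcoming it, and ``possibly appealing to the Borcea--Br\"and\'en classification'' is not a step. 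As written, the proposal has a genuine gap.

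For comparison, the paper's proof does not try to assemble $q$ from counterexample witnesses at all. Its engine is Lemma~\ref{line}: if a rank-$\ge 2$ operator maps $\PP_n(S)$ (for a strip $S$ with complementary half-planes $H_1,H_2$) into $\PP(\Omega)\cup\{0\}$, then already $T:\PP_n(\overline{H_1})\to\PP(\Omega)\cup\{0\}$ or $T:\PP_n(\overline{H_2})\to\PP(\Omega)\cup\{0\}$. That lemma is proved via the Hermite--Biehler/Hermite--Kakeya theorems, Lemma~\ref{one} (a connectedness dichotomy), and a homotopy-plus-continuity argument. The proof of Theorem~\ref{strip} then introduces the continuous family of half-planes $K_t$ interpolating between $H_1^r$ and $H_2$, applies Lemma~\ref{line} at each $t$, and uses Hurwitz's theorem with a minimality argument to rule out the ``bad'' case (which is shown to force $\operatorname{rank}(T)\le 1$). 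That intermediate dichotomy lemma, or an equivalent tool, is precisely what your proposal is missing; without it the ``manufacture $q$'' strategy has no mechanism to control where the zeros of $T$ applied to the deforming family go.
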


\noindent
Note that by the characterization of half-plane stability preservers \cite[Theorem 4]{BB1}, Theorem \ref{strip} characterizes preservers of $H_1 \cup H_2$-stability in terms of zero-restrictions on the symbols of the operators (to be defined below).  When the strip in Theorem \ref{strip} has empty interior, and thus is a line, all $H_1 \cup H_2$-stability preservers are characterized in \cite[Theorem 3]{BB1}.  

An operator which preserves stability on a region $H_1\cup H_2$, as described in the statement of Theorem \ref{strip}, is said to be a \emph{strip preserver}.  More generally, an operator $T:\CC[z_1,z_2,\ldots,z_m]\to \CC[z_1,z_2,\ldots,z_m]$ is said to \emph{preserve} a set of polynomials $\mathcal{S}$ if $T(\mathcal{S})\subseteq \mathcal{S}\cup\{0\}$.  An $\Omega$-stable polynomial is simply called \emph{stable} in the case where $\Omega = H^n$, $n\in\NN$, and 
$$
H= \{z \in \CC : \Im(z)>0\}.
$$

For $\Omega\subset\CC^n$, let $\PPP(\Omega)$ be the closure of the set of polynomials $\PP(\Omega)$ under uniform limits on compact sets in $\CC^n$.
An operator $T$ which acts on $\CC[z]$ is extended to multivariate polynomials by declaring the other variables constant with respect to $T$, and also to formal power series by acting on each term of the series. The formal power series
$$ 
\overline{G}_T(z,w) = T(e^{-zw}) = \sum_{k=0}^\infty \frac{(-1)^kw^k}{k!}T(z^k) \in\CC[z][[w]] 
$$ 
\noindent
is said to be the \emph{symbol of T}.   When $T$ acts on a space of polynomials of bounded degree $n$, the \emph{algebraic symbol} $G_T=T((z+w)^n)$ is used.  In the proofs of Theorems \ref{P-suff-alg} and \ref{P-suff-tran}, a version of the symbol for operators acting on multivariate polynomial spaces appears, and the reader is referred to \cite[Section 1.1]{BB2} for its definition and applications.  
Let $\bar{\zeta}$ be the complex conjugate of $\zeta\in\CC$, and for a set $\Omega\subset\CC$, let the set of conjugates in $\Omega$ be denoted by
\[
\Omega^*=\{\bar{\zeta}\,:\,\zeta \in \Omega\}.
\]
Fix the notation 
\[
\SS_\mu\;:=\;\PP(\{z\,:\,|\Im~z|>\mu\}), 
\]
and $\SS_\mu(\RR):= \SS_\mu\cap\RR[z]$, for the sets of complex (resp. real) polynomials whose zeros lie in a closed strip of width $\mu$.  We state most results relative to the strip $|\Im~z|\le \mu$, since it has special relevance for real entire functions, and can be translated to any other strip by a linear transformation of the variable $z$.  

Note that the algebraic characterization for strip preservers on $\CC[z]$ is given by requiring the conditions in Theorem \ref{strip} to hold for all degrees $n$, and the criteria in \cite[Theorem 4]{BB1}.  The following transcendental characterization of strip preserving operators is also obtained. 

\begin{theorem}\label{trans-strip-char}
Let $T : \CC[z] \rightarrow \CC[z]$ be a linear operator and let $H_1=\{z\,:\,\Im~z>\mu\}$, where $\mu>0$.  Then $T$ preserves $S_\mu$ if and only if 
\renewcommand\theenumi{\roman{enumi}} 
\begin{enumerate}
\item
$T$ has range of dimension at most one and is of the form
\[
T[f]=\alpha(f)p \text{ for } f\in\CC_n[z]
\]
where $\alpha:\CC[z]\to\CC$ is a linear functional and $p\in\PP(H_1\cup H^*_1)$; or
\item
$T$ satisfies \textup{(}a\textup{)} and \textup{(}b\textup{)} below.
\begin{enumerate}\itemsep 2pt
\item\label{upper} $e^{i\mu w}\overline{G}_T(z,w)\in\PPP(H_1\times H)$~~or~~$e^{-i\mu w}\overline{G}_T(z,w)\in\PPP(H_1\times H^*)$; 
\item\label{lower} $e^{i\mu w}\overline{G}_T(z,w)\in\PPP(H^*_1\times H)$~~or~~$e^{-i\mu w}\overline{G}_T(z,w)\in\PPP(H^*_1\times H^*)$. 
\end{enumerate}
\end{enumerate}
\end{theorem}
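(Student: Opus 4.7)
The plan is to reduce Theorem \ref{trans-strip-char} to the half-plane case by applying Theorem \ref{strip} to each finite-degree restriction $T|_{\CC_n[z]}$, and then to translate the resulting half-plane preservation properties into symbol conditions via a shift of variable. If case (i) of Theorem \ref{strip} holds for every $n$, then $T$ itself has rank at most one on $\CC[z]=\bigcup_n\CC_n[z]$; writing $T[f]=\alpha(f)\,p$ and using that $\SS_\mu$ contains a basis of $\CC[z]$ (for instance the monomials $z^k$) forces $p\in\PP(H_1\cup H_1^*)$ whenever $\alpha\not\equiv 0$, giving item (i) of the theorem. Otherwise case (ii) of Theorem \ref{strip} holds for infinitely many $n$; a pigeonhole argument over the four combinations of disjuncts in (a), (b) of Theorem \ref{strip} picks out one combination that occurs for infinitely many $n$, and since $\PP_m(\Omega)\subseteq\PP_n(\Omega)$ for $m\le n$, this combination lifts to a global statement: $T$ maps one of $\PP(H_1),\PP(H_1^*)$ into $\PP(H_1)$ and one of $\PP(H_1),\PP(H_1^*)$ into $\PP(H_1^*)$.

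Each of the four possible global half-plane properties translates into a symbol condition by conjugating $T$ with a shift of $\pm i\mu$. For the representative case $T:\PP(H_1)\to\PP(H_1)$, introduce
\[
\tilde T[g](w) \;:=\; T[g(\cdot-i\mu)](w+i\mu),
\]
so that $\tilde T:\PP(H)\to\PP(H)$. Applying $\tilde T$ to $e^{-wv}$ and using that the scalar $e^{i\mu v}$ commutes with $T$ gives
\[
\overline{G}_{\tilde T}(w,v) \;=\; e^{i\mu v}\,\overline{G}_T(w+i\mu,\,v).
\]
The half-plane stability characterization of Borcea--Br\"and\'en \cite[Theorem 4]{BB1} yields $\overline{G}_{\tilde T}\in\PPP(H\times H)$, and the substitution $z=w+i\mu$ produces $e^{i\mu v}\overline{G}_T(z,v)\in\PPP(H_1\times H)$, the first disjunct of (a). The remaining three cases are handled analogously: input in $H_1^*$ uses the conjugating shift $g(\cdot+i\mu)$, flipping the sign in the exponential to $e^{-i\mu v}$ and turning $H$ into $H^*$ in the second slot; output in $H_1^*$ uses an output shift of $-i\mu$, turning $H_1$ into $H_1^*$ in the first slot. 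Together these give the four disjuncts split between (a) and (b). The reverse implication in each case is obtained by running the argument backwards, using the converse direction of \cite[Theorem 4]{BB1} together with the reverse direction of Theorem \ref{strip}.

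The main obstacle is that $\overline{G}_T(z,w)$ is a formal power series in $w$ with polynomial coefficients in $z$, not a bivariate polynomial, so stability of $\overline{G}_T$ in $\PPP(\cdot\times\cdot)$ must be interpreted via truncation in $w$ and Hurwitz-type passage to the limit; one must verify that multiplication by the entire function $e^{\pm i\mu w}$ is compatible with this closure operation. A secondary subtlety is that the finite-degree symbol conditions extracted from Theorem \ref{strip} at each $n$, once the pigeonhole above has fixed a consistent choice of disjuncts, must be glued into a single statement on the infinite-degree symbol; here one leans on the standard fact that uniform convergence on compacta preserves non-vanishing in any fixed open set, which is precisely what allows $\PPP(\cdot\times\cdot)$ to capture the limiting behavior of the truncated symbols.
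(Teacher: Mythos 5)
Your proposal follows essentially the same route as the paper's proof: restrict to $\CC_n[z]$, apply Theorem~\ref{strip} for each $n$, use the nesting $\PP_m(\Omega)\subseteq\PP_n(\Omega)$ to promote a consistent choice of disjuncts to a global half-plane statement, then conjugate by affine maps and invoke the Borcea--Br\"and\'en half-plane characterization to read off the symbol conditions. The only slips are cosmetic: the step $\overline{G}_{\tilde T}\in\PPP(H\times H)$ requires the \emph{transcendental} characterization \cite[Theorem~6]{BB1} rather than \cite[Theorem~4]{BB1}, and for the $H_1^*$-input cases one must also precompose with the sign flip $z\mapsto -z$ (the paper packages this into its $L_2$) so that BB1's theorem on $\PP(H)\to\PP(H)$ operators actually applies before you substitute $v\mapsto -v$ to land in $\PPP(\cdot\times H^*)$.
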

\noindent 

 In the following examples, let $H_1$ be as in Theorem \ref{trans-strip-char} $T$ be a linear operator and let $T:\CC[z]\to\CC[z]$ unless otherwise stated.

\begin{example}\label{ex-finite-diff}
Let $T=\sum_{k=0}^N Q_k(z)D^k$ be a finite order differential operator with polynomial coefficients $\{Q_k(z)\}_{k=0}^N$, and let $F_T(z,w)=\sum_{k=0}^N Q_k(z)w^k$.  By Theorem \ref{trans-strip-char}, $T$ preserves $\SS_\mu$ if and only if
 \begin{align*}
e^{i\mu w}\overline{G}_T(z,w)=&\;F_T(z,w)e^{-(z-i\mu)w}\in\PPP(H_1\times H),  \text{~and~}\\
 e^{-i\mu w}\overline{G}_T(z,w)=&\;F_T(z,w)e^{-(z+i\mu)w}\in\PPP(H_1^*\times H^*).
\end{align*}
Since $e^{-(z-i\mu)w}\in\PPP(H_1\times H)$ and $e^{-(z+i\mu)w}\in\PPP(H_1^*\times H^*)$, we conclude that
\begin{center}
{\em $T$ preserves $\SS_\mu$ if and only if $F_T(z,w)\in\PP(H_1\times H)\cap\PP(H_1^*\times H^*)$.} 
\end{center}
Thus the criteria of Theorem \ref{trans-strip-char} simplify to just one criterion on the symbol.  
For an operator satisfying this criterion, $T : \PP_n(H_1) \rightarrow \PP(H_1) \cup\{0\}$  and $T : \PP_n(H_1^*) \rightarrow \PP(H_1^*) \cup\{0\}$. 
\end{example}

\begin{example}
The operator $T(p(z)) = p(-z)$ clearly preserves $\SS_\mu$ for any $\mu\ge 0$, and satisfies $T : \PP_n(H_1^*) \rightarrow \PP(H_1) \cup\{0\}$  and $T : \PP_n(H_1) \rightarrow \PP(H_1^*) \cup\{0\}$.
\end{example}

\begin{example}
The operator $T:\CC_n[z]\to\CC[z]$, given by $T(p(z)) = (z-1)^np(2iz/(z-1)),$  preserves $\SS_{\mu=1}$.  $T$ has been constructed using a M\"obius transformation which takes the open unit disk $\{z : |z|<1\}$ to the open half-plane $\{z : \Im~z<1\}$, and therefore $T : \PP_n(H_1) \rightarrow \PP(H_1) \cup\{0\}$  and $T : \PP_n(H_1) \rightarrow \PP(H_1^*) \cup\{0\}$.
\end{example} 

  The results in \cite{D} are dependent on the following theorem, which extends Jensen's theorem on critical points.  The notation $D:=d/dz$ is used throughout the sequel to simplify expressions involving derivatives.
\begin{theorem}[\cite{D}]\label{debruijn-jensen}
If $p(z)\in\SS_\mu(\RR)$, then for $\lambda \ge 0$, and $\xi\in\CC\setminus\{0\}$, 
$$\xi p(z+i\lambda) + \bar{\xi} p(z-i\lambda)\in\SS_\delta,$$  
where $\delta = \sqrt{\max\{\mu^2-\lambda^2, 0\}}.$
\end{theorem}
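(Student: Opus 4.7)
My plan is to prove the contrapositive: if $z_0=x+iy\in\CC$ satisfies $|y| > \delta$, then $q(z_0) := \xi p(z_0+i\lambda) + \bar\xi p(z_0-i\lambda)\ne 0$. A direct check shows $\overline{q(\bar z)}=q(z)$ (using $p\in\RR[z]$), so zeros of $q$ come in complex-conjugate pairs and it suffices to handle the case $y > \delta$.

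I split the argument according to whether $p(z_0\pm i\lambda)$ vanishes. If both vanish, then $z_0+i\lambda$ and $z_0-i\lambda$ are both zeros of $p$, so $y+\lambda\le\mu$ and $-(y-\lambda)\le\mu$, forcing $|y|\le\mu-\lambda$. For $\lambda>\mu$ this already contradicts $y>\delta=0$; for $\lambda\le\mu$ we use $(\mu-\lambda)^2\le\mu^2-\lambda^2=\delta^2$ to contradict $y>\delta$. If exactly one of $p(z_0\pm i\lambda)$ vanishes, then $q(z_0)$ reduces to a single nonzero term and is nonzero since $\xi,\bar\xi\ne 0$. Thus the remaining case is $p(z_0\pm i\lambda)\ne 0$, and $q(z_0)=0$ would force
\[
|p(z_0+i\lambda)|=|p(z_0-i\lambda)|
\]
because $\xi p(z_0+i\lambda)=-\bar\xi p(z_0-i\lambda)$ and $|\xi|=|\bar\xi|$.

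The core step is to rule out this modulus equality. I factor $p$ over $\CC$ and group factors into real zeros and conjugate pairs $a\pm ib$ with $0<b\le\mu$. A real zero $a$ contributes $\bigl((x-a)^2+(y+\lambda)^2\bigr)/\bigl((x-a)^2+(y-\lambda)^2\bigr)$ to $|p(z_0+i\lambda)/p(z_0-i\lambda)|^2$, which exceeds $1$ whenever $y>0$. A routine expansion shows that a conjugate pair contributes $g(y+\lambda)/g(y-\lambda)$ with $g(s):=((x-a)^2+s^2+b^2)^2-4b^2s^2$, and
\[
g(y+\lambda)-g(y-\lambda)=8\lambda y\bigl[(x-a)^2+y^2+\lambda^2-b^2\bigr].
\]
For $y>\delta$ and $b\le\mu$ one has $y^2+\lambda^2>\delta^2+\lambda^2=\mu^2\ge b^2$, so this bracket is strictly positive and each pair's contribution exceeds $1$. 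Multiplying all factors gives $|p(z_0+i\lambda)/p(z_0-i\lambda)|^2>1$, the desired contradiction.

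The main obstacle is really bookkeeping rather than a conceptual difficulty: one must verify that the pairing of conjugate zeros (which uses $p\in\RR[z]$) yields the clean comparison above, and that the uniform bound $y^2+\lambda^2>b^2$ survives in both regimes $\lambda\le\mu$ and $\lambda>\mu$. Once that is in place, the theorem follows by what is essentially a potential-theoretic comparison, asserting that $\log|p(z+i\lambda)/p(z-i\lambda)|$ is positive on $\Im z>\delta$ and negative on $\Im z<-\delta$.
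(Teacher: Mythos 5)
The paper states Theorem \ref{debruijn-jensen} with a citation to de Bruijn's paper \cite{D} and gives no proof of its own, so there is nothing in-paper to compare against; I have reviewed your argument on its own merits. The core of your proof is correct: the algebra in the factor-by-factor comparison checks out, including the identity $|((x-a)+is)^2+b^2|^2=((x-a)^2+s^2+b^2)^2-4b^2s^2$ and the difference formula $g(y+\lambda)-g(y-\lambda)=8\lambda y\bigl[(x-a)^2+y^2+\lambda^2-b^2\bigr]$, and the key inequality $y^2+\lambda^2>\delta^2+\lambda^2\ge\mu^2\ge b^2$ does hold uniformly in the two regimes $\lambda\le\mu$ and $\lambda>\mu$. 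This is the standard Jensen-type monotonicity argument, and it establishes the desired strict modulus inequality $|p(z_0+i\lambda)|>|p(z_0-i\lambda)|$ for $\Im z_0>\delta$ whenever $\lambda>0$ and $p$ is nonconstant.

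You should, however, close two small gaps explicitly. First, your factor inequalities are strict only because $8\lambda y>0$, so the argument tacitly assumes $\lambda>0$; when $\lambda=0$ one has $\delta=\mu$ and $q=2\Re(\xi)\,p$, which is trivially in $\SS_\mu$ provided $\Re\xi\ne 0$. Second, in the degenerate case where $p$ is a nonzero constant (there are no factors to multiply) and, more generally, when $\Re\xi=0$ and $\lambda=0$, one gets $q\equiv 0$, which by the paper's convention is excluded from $\PP(\Omega)$; that degeneracy should be ruled out or acknowledged as the one exceptional case. Also, in your Case 1 the line ``forcing $|y|\le\mu-\lambda$'' is not quite what the two constraints give; the operative bound comes from $y+\lambda\le\mu$ alone, namely $0<y\le\mu-\lambda$, after which your comparison $(\mu-\lambda)^2\le\mu^2-\lambda^2=\delta^2$ (for $\lambda\le\mu$) and the observation that $\lambda>\mu$ is impossible finish the case correctly. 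With these points patched, the proof is complete and self-contained.
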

\noindent
(The original statement of Theorem \ref{debruijn-jensen} constrains the zeros of $\xi p(z+i\lambda) + \bar{\xi} p(z-i\lambda)$ to lie in a union of ellipses contained in the strip.) With $\xi=1/2$, the operator in Theorem \ref{debruijn-jensen} is $\cos(\lambda D)$, $\lambda\ge0$.  Using de Bruijn's theorem, D. Bleecker and  G. Csordas proved the following theorem, which describes precisely the action of $e^{-\lambda D^2}$ on real polynomials whose zeros lie in a horizontal strip.  

\begin{theorem}[{\cite[Theorem 3.2]{BC2001}}] \label{gaussian-diff}
If $p(z)\in\RR_n[z]\cap\SS_\mu$, $n\ge 1$, then for $\lambda\ge0$,
$$q(z) = e^{-\lambda D^2}p(z) = \sum_{k=0}^{\lfloor n/2 \rfloor} (-1)^k\frac{\lambda^k}{k!}D^{2k}p(z)\in\RR_n[z]\cap\SS_\delta,$$
where $\delta = \sqrt{\max\{\mu^2-2\lambda, 0\}}$.
\end{theorem}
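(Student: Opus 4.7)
The plan is to obtain $e^{-\lambda D^2}$ as a Lie--Trotter-type limit of iterated applications of the cosine operator $\cos(\epsilon D) = \tfrac{1}{2}(e^{i\epsilon D}+e^{-i\epsilon D})$, which is exactly the operator produced by Theorem \ref{debruijn-jensen} with $\xi=1/2$. Each application of $\cos(\epsilon D)$ shrinks the strip by Theorem \ref{debruijn-jensen}, and choosing $\epsilon$ appropriately as a function of the number of iterations $N$ makes the total shrinkage equal to $\mu^2-2\lambda$ in the limit.

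First, apply Theorem \ref{debruijn-jensen} with $\xi=1/2$ and shift $\epsilon\ge 0$: for $p\in\RR_n[z]\cap \SS_\mu$,
\[
\cos(\epsilon D)\,p(z)=\tfrac{1}{2}\bigl(p(z+i\epsilon)+p(z-i\epsilon)\bigr)\in\SS_{\sqrt{\max\{\mu^2-\epsilon^2,\,0\}}}.
\]
Since $\cos(\epsilon D)=\sum_{k\ge 0}\frac{(-1)^k\epsilon^{2k}}{(2k)!}D^{2k}$ has real coefficients and preserves the leading term of $p$, the image lies in $\RR_n[z]$ with the same leading coefficient. Iterating this step $N$ times yields
\[
\cos(\epsilon D)^N p\;\in\;\RR_n[z]\cap\SS_{\sqrt{\max\{\mu^2-N\epsilon^2,\,0\}}}.
\]
I would then choose $\epsilon=\epsilon_N:=\sqrt{2\lambda/N}$, so that $N\epsilon_N^2=2\lambda$ and the strip width is exactly $\delta$ independent of $N$.

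The last step is to pass to the limit $N\to\infty$ and identify the limit as $e^{-\lambda D^2}p$. On the finite-dimensional space $\RR_n[z]$ the operator $D$ is nilpotent ($D^{n+1}=0$), so everything reduces to polynomial manipulations: the Taylor expansion
\[
\cos(\epsilon_N D)=I-\tfrac{\lambda}{N}D^2+\tfrac{1}{N^2}R_N,
\]
where $R_N$ is a polynomial in $D$ whose coefficients are bounded uniformly in $N$, yields $\cos(\epsilon_N D)^N\to e^{-\lambda D^2}$ as operators on $\RR_n[z]$ by the standard identity $\lim_N(I+B/N+O(1/N^2))^N=e^B$. Hence $q_N(z):=\cos(\epsilon_N D)^N p(z)\to q(z)$ coefficient-wise, and therefore uniformly on compact subsets of $\CC$.

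Finally I would invoke Hurwitz's theorem on the open set $\Omega:=\{z:|\Im z|>\delta\}$: each $q_N$ is zero-free on $\Omega$, the limit $q$ has the same leading coefficient as $p$ (so is not identically zero and in particular $\deg q=n$), and $q$ has real coefficients as a limit of real polynomials. Hurwitz then forces $q\in\RR_n[z]\cap\SS_\delta$, completing the proof. The only genuine technical point is the Trotter-type convergence in the last step, but restriction to the finite-dimensional space $\RR_n[z]$ reduces it to elementary analysis of polynomials in the nilpotent operator $D$.
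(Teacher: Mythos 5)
Your proof is correct and follows exactly the route the paper sketches immediately after the theorem statement: apply de Bruijn's Theorem \ref{debruijn-jensen} with $\xi=1/2$ to shrink the strip at each step, iterate, and pass to the limit $\cos(\epsilon_N D)^N\to e^{-\lambda D^2}$ using the nilpotence of $D$ on $\RR_n[z]$ together with Hurwitz's theorem. Note that you have tacitly corrected a small typo in the paper's equation \eqref{cos-limit}: since $\cos(\epsilon D)=I-\tfrac{\epsilon^2}{2}D^2+\cdots$ and each application decreases $\mu^2$ by $\epsilon^2$, the scale must be $\epsilon_N=\sqrt{2\lambda/N}$ (as you wrote) rather than $\sqrt{\lambda/N}$ in order that the limiting operator be $e^{-\lambda D^2}$ and the accumulated shrinkage be exactly $2\lambda$.
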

\noindent
Note that Theorem \ref{gaussian-diff} follows quickly from Theorem \ref{debruijn-jensen} by observing the limit (uniform on compact sets)
\begin{equation}\label{cos-limit} 
\left(\cos\left(\sqrt{\frac{\lambda}{n}}D\right)\right)^n\to~ \exp(-\lambda D^2)\qquad (n\to\infty).
\end{equation}
H. Ki, Y. Kim, and J. Lee, proved generalizations of Theorem \ref{gaussian-diff} in the case where the polynomial $p$ is replaced by a transcendental entire function with a finite number of zeros outside of a prescribed strip \cite{KKL}.  
The differential operators furnished by Theorem \ref{debruijn-jensen} are limited to those of the form $A\cos(\lambda D + r)$, $a,\lambda,r\in\RR$. Prior to this work, the operators given by Theorem \ref{debruijn-jensen} together with $e^{-\lambda D^2}$, $\lambda>0$ were, to our knowledge, the only known operators that narrow the strip (up to composition).  The next theorem, proved  in Section \ref{real-case}, provides a large class of new operators which map $\SS_\mu(\RR)$ to $S_\delta(\RR)$, with $\delta<\mu$ for $\mu>0$.

\begin{theorem}\label{stab-strip}
Let $T=\sum_{k=0}^\infty a_k D^k = h(D)e^{i\lambda D}$ be a differential operator with constant coefficients $a_k\in\CC$, $k=0,1,2\ldots,$  $T^*:=\sum_{k=0}^\infty \bar{a}_k D^k$, and suppose that $h(w)\in\PPP(H^*)$.  If $p\in\SS_\mu(\RR)$, then 
$$(T+T^*)(p(z)) = h(D)p(z+i\lambda) + h^*(D)p(z-i\lambda) \in\SS_\delta(\RR),$$
where $\delta=\sqrt{\max\{\mu^2-\lambda^2, 0\}}$, and $h^*(z)=\overline{h(\bar{z})}.$
\end{theorem}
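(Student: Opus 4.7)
The plan is to first handle the case when $h$ is a polynomial with all zeros in the closed upper half-plane, and then pass to general $h\in\PPP(H^*)$ by a Hurwitz-type limit argument. Since $h\in\PPP(H^*)$, there exist polynomials $h_N\in\PP(H^*)$ with $h_N\to h$ locally uniformly on $\CC$; set $T_N=h_N(D)e^{i\lambda D}$. For each polynomial $p\in\SS_\mu(\RR)$ of fixed degree $n$, the image $(T_N+T_N^*)(p)$ is a polynomial of degree at most $n+\deg h_N$ whose coefficients depend continuously on those of $h_N$, so $(T_N+T_N^*)(p)\to(T+T^*)(p)$ coefficient-wise. Assuming each $(T_N+T_N^*)(p)\in\SS_\delta(\RR)$, Hurwitz's theorem yields the same for the limit, or else the limit is identically zero.

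For the polynomial case, I would decompose $h=A+iB$ with $A,B\in\RR[w]$. The hypothesis that all zeros of $h$ lie in the closed upper half-plane is exactly the classical Hermite-Biehler condition on the pair $(A,B)$. Using $h+h^*=2A$ and $h-h^*=2iB$, a direct calculation gives
\[
(T+T^*)(p)(z) \;=\; 2\bigl[A(D)\cos(\lambda D)p(z)\;-\;B(D)\sin(\lambda D)p(z)\bigr].
\]
Applying Theorem~\ref{debruijn-jensen} with $\xi=a+ib$ for any $(a,b)\in\RR^2\setminus\{0\}$ shows that every nonzero real linear combination $aC-bS$ of $C:=\cos(\lambda D)p$ and $S:=\sin(\lambda D)p$ lies in $\SS_\delta(\RR)$. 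This is a strip analogue of the Hermite-Kakeya-Obreschkoff property for the pair $(C,S)$.

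The crux of the argument is then to show that this strip-HKO property of $(C,S)$ is preserved under the simultaneous action of the classical Hermite-Biehler pair $(A(D),B(D))$, yielding $A(D)C-B(D)S\in\SS_\delta(\RR)$. I would establish this via Theorem~\ref{real-strip-h-equiv}, identifying its larger set of complex polynomials with the strip-HKO class and invoking the half-plane stability preserver characterization of \cite[Theorem~4]{BB1} for $h\in\PP(H^*)$. The main obstacle is that the naive expansion
\[
A(D)C-B(D)S \;=\; \sum_{k}\bigl(a_k\,C^{(k)}\;-\;b_k\,S^{(k)}\bigr)
\]
gives each summand in $\SS_\delta(\RR)$ (by Gauss--Lucas together with Theorem~\ref{debruijn-jensen} applied to $p^{(k)}$), but $\SS_\delta(\RR)$ is not closed under addition, so summand-wise control is insufficient. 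A finer argument---likely via the bivariate polynomial $wC(z)+S(z)\in\CC[z,w]$ and its stability on a suitable product region, combined with the half-plane preserver property of $h$---is needed to transfer the HKO-type hypothesis on the pair $(C,S)$ into a single zero-location conclusion for $A(D)C-B(D)S$.
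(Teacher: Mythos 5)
Your setup is on the right track --- the decomposition $(T+T^*)p = 2\bigl[A(D)\cos(\lambda D)p - B(D)\sin(\lambda D)p\bigr]$ is correct, the Hurwitz reduction to polynomial $h$ is fine, and Theorem~\ref{debruijn-jensen} with $\xi=a+ib$ does show that every nontrivial real combination $aC-bS$ lies in $\SS_\delta(\RR)$. That observation is, via Lemma~\ref{real-strip-lemma} and Lemma~\ref{h-star-bar-conditions}, precisely the statement $R(z):=p(z+i\lambda)=C+iS\in\HDD$ with parameter $\delta$ --- which in fact follows directly (and more simply) from Lemma~\ref{debruijn-lemma}, without detouring through Theorem~\ref{debruijn-jensen}.

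The gap is the "crux" step you flag but do not prove: that $h(D)$ with $h\in\PPP(H^*)$ carries the class $\PPD$ (with parameter $\delta$) into itself, so that $f:=h(D)R\in\PPD$ and hence $f+f^*=2(A(D)C-B(D)S)\in\SS_\delta(\RR)$. You propose to obtain this "via Theorem~\ref{real-strip-h-equiv}", but that result is a characterization going in the wrong direction (it converts preservation of $\SS_\mu(\RR)$ into preservation of $\PPD$ or $\PPD^*$; it does not tell you that a given $h(D)$ preserves $\PPD$). What you actually need is Theorem~\ref{constant-coefficients}: $f(D)$ preserves $\PPD$ if and only if $f\in\PPP(H^*)$. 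That is a nontrivial theorem whose proof runs through the bivariate sufficient condition of Theorem~\ref{P-suff-tran} (ultimately \cite[Theorem 2.3]{BB2}) applied to the operator $\widetilde{T}(f+wg)=R(f)-J(g)+w(J(f)+R(g))$. Your remark that "a finer argument---likely via the bivariate polynomial $wC(z)+S(z)$ and its stability on a suitable product region---is needed" is pointing at the right machinery, but you have not carried it out; until Theorem~\ref{constant-coefficients} (or an equivalent argument) is in hand, the main implication of your proof is unestablished. Note also that once you have Theorem~\ref{constant-coefficients} and Lemma~\ref{debruijn-lemma}, the whole argument collapses to three lines: $R\in\HD\cap\CC[z]$, $h(D)R\in\PPD$, and $h(D)R+(h(D)R)^*\in\SS_\delta(\RR)$ by Lemma~\ref{h-star-bar-conditions}, with no need to decompose into $A,B,C,S$ at all.
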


\begin{remark}
If $T=\sum_{k=0}^\infty a_k D^k$ is a differential operator as in Theorem \ref{stab-strip}, then $T:\PP(H)\to\PP(\{\Im~z>-\lambda\})$, $\lambda\ge0$, if and only if $T$ has the form $T=h(D)e^{i\lambda D}$, where $h(D)$ is a stability preserving differential operator. The corresponding symbol of such an operator is $\overline{G}_T(z,w)=h(-w)e^{-i\lambda w}$, where $h(-w)\in\PPP(H)$ (by \cite[Theorem 6]{BB1}) or equivalently $h(w)\in\PPP(H^*)$.  
\end{remark}

\begin{remark}
 Let $S(t)=h(it)e^{-\lambda t} + h^*(it)e^{\lambda t}$, where $h(w)\in\PPP(H^*)$. Theorem \ref{stab-strip} implies that functions with the form of $S$ satisfy property \eqref{su1} in the definition of strong universal factors (see Theorem \ref{stab-strip-entire}).  Recently, a preprint of D.~Cardon appeared  where he shows that an operator with property \eqref{su1} of strong universal factors must have order at least one and either mean or maximal type \cite{cardon}.
\end{remark}

Note that Theorem \ref{debruijn-jensen} is a special case of Theorem \ref{stab-strip}, where the operator $h$ is multiplication by $\xi$. 
In Section \ref{real-case}, Theorem \ref{stab-strip} is used to prove the following generalization of a theorem of P\'olya, who showed that the finite cosine transforms of positive increasing functions have only real zeros \cite[Problem 205]{PSz1}. 

\begin{theorem}\label{integral-shrink}
Let $f(z)$ be an entire function with the representation
$$
f(z) = \int_0^1 \cos(\lambda zt) g(t) \;dt, 
$$
where the function $g$ is non-negative and non-decreasing, and $\lambda\in\RR$.  The differential operator $f(D)$ maps  
$\SS_\mu(\RR)$ to $S_\delta(\RR)$, where $\delta = \sqrt{\max\{\mu^2-\lambda^2/4, 0\}}$.
\end{theorem}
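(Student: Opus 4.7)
\emph{Proof proposal.} The plan is to cast $f(D)$ in the form $h(D)e^{i\sigma D}+h^{*}(D)e^{-i\sigma D}$ demanded by Theorem~\ref{stab-strip} with shift parameter $\sigma=\lambda/2$, and then to verify the hypothesis $h\in\PPP(H^{*})$. Replacing $\lambda$ by $|\lambda|$ (using the evenness of cosine) and handling $\lambda=0$ trivially, I assume $\lambda>0$ and $g\not\equiv 0$.

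\emph{Step 1 (Decomposition).} Write $\cos(\lambda zt)=\tfrac12(e^{i\lambda zt}+e^{-i\lambda zt})$ and substitute $s=t-\tfrac12$ in each half-integral to obtain
\[
f(z)=\tfrac12\bigl(e^{i\lambda z/2}\tilde h(z)+e^{-i\lambda z/2}\tilde h^{*}(z)\bigr),\qquad \tilde h(w):=\int_{-1/2}^{1/2} g\!\left(s+\tfrac12\right) e^{i\lambda sw}\,ds.
\]
Consequently $f(D)=\tfrac12\tilde h(D)e^{i\lambda D/2}+\tfrac12\tilde h^{*}(D)e^{-i\lambda D/2}$, which has exactly the shape of $T+T^{*}$ in Theorem~\ref{stab-strip} with $h=\tilde h/2$ and shift $\lambda/2$.

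\emph{Step 2 (The symbol belongs to $\PPP(H^{*})$).} Approximate the non-decreasing, non-negative function $G(s):=g(s+\tfrac12)$ on $[-\tfrac12,\tfrac12]$ by monotone step functions $G_{N}=\sum_{k=1}^{N}c_{k}\mathbf{1}_{[s_{k},\,1/2]}$ with $c_{k}\ge 0$ and $s_{k}<\tfrac12$. A direct computation gives
\[
\tilde h_{N}(w)=\frac{e^{i\lambda w/2}}{i\lambda w}\sum_{k=1}^{N}c_{k}\bigl(1-e^{-i\lambda w\beta_{k}}\bigr),\qquad \beta_{k}:=\tfrac12-s_{k}>0.
\]
For $\Im w<0$ one has $|e^{-i\lambda w\beta_{k}}|=e^{\lambda\beta_{k}\Im w}<1$, so the strict triangle inequality yields $|\sum_{k}c_{k}e^{-i\lambda w\beta_{k}}|<\sum_{k}c_{k}$ whenever not all $c_{k}$ vanish; the bracketed sum is therefore nonzero and $\tilde h_{N}(w)\neq 0$ throughout $H^{*}$. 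Since $G_{N}\to G$ in $L^{1}$, $\tilde h_{N}\to\tilde h$ uniformly on compacta in $\CC$; because $\tilde h(0)=\int_{0}^{1} g(t)\,dt>0$, Hurwitz's theorem gives $\tilde h\neq 0$ in $H^{*}$. Paired with the Paley--Wiener bound that $\tilde h$ has exponential type $\le\lambda/2$, the transcendental characterisation of $\PPP(H^{*})$ from \cite{B2} then places $\tilde h\in\PPP(H^{*})$.

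\emph{Step 3 (Conclusion).} Applying Theorem~\ref{stab-strip} with $h=\tilde h/2$ and shift $\lambda/2$, for every $p\in\SS_{\mu}(\RR)$ we obtain
\[
f(D)p(z)=\tfrac12\tilde h(D)\,p(z+i\lambda/2)+\tfrac12\tilde h^{*}(D)\,p(z-i\lambda/2)\in\SS_{\delta}(\RR),
\]
where $\delta=\sqrt{\max\{\mu^{2}-(\lambda/2)^{2},\,0\}}=\sqrt{\max\{\mu^{2}-\lambda^{2}/4,\,0\}}$, as claimed. The main obstacle is Step~2: the elementary triangle-inequality argument shows the absence of zeros of $\tilde h$ in the open lower half-plane without difficulty, but upgrading this to bona fide membership in the class $\PPP(H^{*})$ requires the intrinsic (Laguerre--P\'olya-type) description of $\PPP(H^{*})$ for transcendental entire functions of appropriate growth, which is the reason for the appeal to \cite{B2}.
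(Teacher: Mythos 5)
Your overall strategy is exactly the paper's: after decomposing $f(D)=\tfrac12\tilde h(D)e^{i\lambda D/2}+\tfrac12\tilde h^{*}(D)e^{-i\lambda D/2}$, the whole theorem reduces to showing $\tilde h\in\PPP(H^{*})$ and invoking Theorem~\ref{stab-strip} with shift $\lambda/2$. Step~1 and Step~3 are fine.

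Step~2, however, has a genuine gap. You correctly prove that $\tilde h$ is non-vanishing in the open lower half-plane $H^{*}$ (the triangle-inequality computation on the step-function approximants $\tilde h_{N}$ is correct, and Hurwitz passes this to $\tilde h$). But ``entire, exponential type $\le\lambda/2$, non-vanishing in $H^{*}$'' does \emph{not} imply membership in $\PPP(H^{*})$, and there is no result in \cite{B2} that performs this lift. The class $\PPP(H^{*})$ is the locally-uniform closure of polynomials non-vanishing in $H^{*}$; its transcendental members obey a Hadamard-type factorization (cf.\ \cite[Ch.~VIII]{Levin}) in which the exponential factor $e^{\beta w}$ must satisfy $\Im\beta\ge 0$, a constraint not controlled by type alone. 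A concrete counterexample: $e^{-iw}$ is of exponential type $1$, bounded on $\RR$, and vanishes nowhere (hence nowhere in $H^{*}$), yet $e^{-iw}\notin\PPP(H^{*})$ (its only polynomial approximants $(1-iw/n)^n$ have zeros in $H^{*}$, and the Hadamard condition $\Im\beta\ge 0$ fails with $\beta=-i$). Likewise each of your $\tilde h_{N}$ is a nonnegative combination of terms $\int_{s_k}^{1/2}e^{i\lambda sw}\,ds$, and while each of these lies individually in $\PPP(H^{*})$, $\PPP(H^{*})$ is not closed under addition, so you cannot conclude $\tilde h_{N}\in\PPP(H^{*})$ from that either. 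In short: you need to produce \emph{actual polynomial approximants in $\PP(H^{*})$}, not merely show zero-freeness in $H^{*}$.

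This is precisely what the paper's proof supplies and what is missing here. The paper discretizes $\tilde h$ (equivalently, after a rotation, the Hurwitz-stability of $F(z)=\int_{-1/2}^{1/2}e^{zt}g(t+1/2)\,dt$) by Riemann sums, obtaining $H_M$ as a genuine \emph{polynomial} in $\zeta=e^{z/2M}$ with nonnegative, nondecreasing coefficients (here is where the monotonicity of $g$ is used in an essential, sign-structural way rather than just to guarantee $c_k\ge0$). The Enestr\"om--Kakeya theorem then places the zeros of that polynomial in $\overline{\DD}$, after which each real/conjugate-pair factor of $H_M$ is checked to be Hurwitz stable by an explicit $\cosh/\sinh$ computation; Hurwitz's theorem finishes. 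That chain of steps is what actually certifies $\tilde h\in\PPP(H^{*})$, and nothing in your Step~2 replaces it. To repair your argument you would need either to carry out the Hadamard-factorization analysis and verify $\Im\beta\ge 0$ for $\tilde h$ (which requires comparing the indicator function of $\tilde h$ in the upper and lower half-planes, using the one-sided nature of $G$), or to switch to an approximation scheme that lands inside $\PP(H^{*})$ on the nose, as the paper does.
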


Theorems \ref{stab-strip} and \ref{integral-shrink} provide a means of constructing a substantially more diverse collection of differential operators which map polynomials in $\SS_\mu(\RR)$ to polynomials whose zeros lie in a narrower strip.  For example, Theorem \ref{integral-shrink} implies that the zeroth Bessel function, $J_0(z)$ (see \eqref{bessel}), will shrink the strip containing the zeros of a real polynomial when applied as a differential operator $J_0(D)$.  We will frequently reference the following class of entire funtions.

\begin{definition}\label{lp}
A real entire function $\varphi(x)=\sum_{k=0}^{\infty}\gamma_kx^k/k!$ is in the 
\emph{Laguerre-P\'olya class}, written $\varphi\in \LP$, if it can be expressed in the form
$$\varphi(x) = cx^me^{-ax^2 + bx}\prod_{k=1}^\omega\left(1+\frac{x}{x_k}\right)e^{-x/x_k} \text{\hspace{5 mm}} (0\le\omega\le\infty),$$
where $b,c,x_k \in \mathbb{R}$, $m$ is a non-negative integer, $a\ge 0$, $x_k\neq 0$, and 
$\sum_{k=1}^\omega 1/x_k^2<\infty$. 
\label{LP}
\end{definition}
\noindent

Laguerre and P\'olya proved that $\LP=\PPP(H\cup H^*)$; that is, the Laguerre-P\'olya class consists precisely of functions which are locally uniform limits of polynomials which have only real zeros.
An immediate consequence of Theorem \ref{trans-strip-char} is that differential operators with constant coefficients, which preserve the set of polynomials whose zeros lie in a given strip, must be of the form $\varphi(D)$, where $\varphi$ is an entire function in the Laguerre-P\'olya class (Corollary \ref{strip-diff-op}).   

We continue with a series of lemmas and the proof of Theorems \ref{strip} and \ref{trans-strip-char} in Section \ref{strip-proof}.  M\"obius images of the strip are addressed in Appendix \ref{appendix1}.  In Section \ref{real-case}, necessary and sufficient conditions are given for a linear operator $T:\RR[z] \rightarrow \RR[z]$ to preserve $\SS_\mu$, $\mu \ge 0$ for real polynomials (Propositions \ref{real-strip-h-equiv}, \ref{real-strip}), although a characterization involving the operator symbol is not obtained.  These results are then applied to prove Theorems \ref{stab-strip} and \ref{integral-shrink}.  The main results are extended to transcendental entire functions in Bargmann-Fock spaces in Section \ref{section:b-f}  (see Theorems \ref{entire-strip}, \ref{stab-strip-entire} and \ref{integral-shrink-entire}).  In Section \ref{fourier-kernel}, we extend a theorem of N. G. de~Bruijn and L. Ilieff which states a sufficient condition for a Fourier transform to have only real zeros (Theorem \ref{fourier-polynomial}, Corollary \ref{fourier-cor}).  
\\[1ex]
\noindent
{\bf Ackowledgements.} Both authors would like to thank the American Institute of Mathematics for their hospitality during December of 2011 when this project was initiated.  We also thank Professor George Csordas for providing thorough feedback on an earlier version of the manuscript.

\section{Strip preservers for complex polynomials}
\label{strip-proof}

In this section, Theorem \ref{strip} is proved using a property of linear transformations which map complex polynomials stable on half-planes separated by a line, to polynomials stable on another region (Lemma \ref{line}).  At the end of the section, Theorem \ref{trans-strip-char} is proved, and a characterization for open strip preservers is given (Corollary \ref{open-strip}).  The classical result known as \emph{Hurwitz's Theorem} (see \cite[footnote 3, p. 22]{COSW} for a multivariate version) will be invoked frequently by name throughout the paper.  The proof of Theorem \ref{strip} requires several lemmas, including the following generalization of the classical Hermite-Biehler theorem.
 
\begin{lemma}[{\cite[Lemma 2.8]{BB2}}]\label{addz}
Let $p=q+ir \in \CC[z_1,\ldots, z_n]$, where $q$ and $r$ are real polynomials. The following are equivalent: 
\begin{enumerate}\renewcommand\theenumi{\roman{enumi}}
\item $p$ is stable; 
\item $q+wr \in \RR[z_1,\ldots, z_n,w]$ is stable. 
\end{enumerate}
\end{lemma}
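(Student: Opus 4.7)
The plan is to rewrite $q+wr$ in terms of $p$ and the conjugate polynomial $\bar p = q-ir$, reducing the lemma to a Schur-type comparison between $p$ and $\bar p$. Substituting $q=(p+\bar p)/2$ and $r=(p-\bar p)/(2i)$ gives the identity
\[
2i\bigl(q(z)+wr(z)\bigr) \;=\; (w+i)\,p(z)-(w-i)\,\bar p(z).
\]
The M\"obius transformation $\phi(w) = (w-i)/(w+i)$ maps the open upper half-plane $H$ bijectively onto the open unit disc $\DD$, so $|(w+i)/(w-i)| > 1$ on $H$. Thus stability of $q+wr$ on $H^n \times H$ is equivalent to the assertion that $\bar p(z)/p(z) \neq (w+i)/(w-i)$ whenever $(z,w)\in H^n\times H$ and $p(z)\neq 0$, and in particular follows from the Schur-type bound $|\bar p(z)|\leq|p(z)|$ on $H^n$.

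For (ii) $\Rightarrow$ (i), I would take $w\to i$ along a path in $H$ and apply the multivariate Hurwitz theorem to the family $\{q+wr\}_{w\in H}$ of polynomials in $z$, which are uniformly non-vanishing on $H^n$ by hypothesis. The limit $p = q+ir$ must then be either identically zero or non-vanishing on $H^n$; excluding the trivial case, $p$ is stable.

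For (i) $\Rightarrow$ (ii), the task reduces to verifying the inequality $|\bar p(z)|\leq|p(z)|$ on $H^n$ for stable $p$. In one variable this is the classical factor-by-factor computation: writing $p(z)=c\prod_j(z-\alpha_j)$ with $\Im\alpha_j\leq 0$ and $z=x+iy$ with $y>0$, the estimate $(y-\Im\alpha_j)^2\geq(y+\Im\alpha_j)^2$ gives $|z-\alpha_j|\geq|z-\bar\alpha_j|$ for each $j$, and the full inequality follows. In several variables, coordinate-by-coordinate reflection is not available because specializing one variable to $H^*$ can destroy stability in the remaining variables; the reduction must be performed globally, for instance by restricting $p$ to complex lines $z(\tau) = a + \tau b$ with $a\in H^n$ and a suitable real direction $b\in\RR^n_{>0}$, along which simultaneous reflection of all $z_j$ is captured by a single complex conjugation $\tau\mapsto\bar\tau$, and combining the resulting univariate estimates.

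The main obstacle is precisely this multivariate Schur-type comparison $|\bar p(z)|\leq|p(z)|$ on $H^n$. Once it is in place, the algebraic identity together with $|(w+i)/(w-i)|>1$ on $H$ forces $q+wr$ to be non-vanishing on $H^n\times H$, completing the harder direction; the reverse direction is then a routine Hurwitz limit.
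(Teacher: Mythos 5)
Your strategy is sound and essentially recovers the standard argument. The algebraic identity $2i(q+wr)=(w+i)p-(w-i)\bar p$, the Cayley-transform observation that $|(w+i)/(w-i)|>1$ on $H$, the Hurwitz-limit argument for (ii)$\Rightarrow$(i), and the reduction of (i)$\Rightarrow$(ii) to the Schur-type estimate $|\bar p(z)|\leq |p(z)|$ on $H^n$ are all correct and in the spirit of the cited source; the paper itself does not reprove the lemma but refers to \cite[Lemma 2.8]{BB2}, whose proof likewise reduces matters to the one-variable picture by restricting to real lines.

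There is, however, one concrete slip in your proof of the Schur estimate. You restrict $p$ to lines $z(\tau)=a+\tau b$ with $a\in H^n$ and $b\in\RR^n_{>0}$. For the claimed ``single complex conjugation'' to work --- that is, for the restriction of $\bar p$ to the line to coincide with $\overline{g(\bar\tau)}$ where $g(\tau)=p(a+\tau b)$ --- one needs $\overline{a+\tau b}=a+\bar\tau b$, and this forces $a\in\RR^n$ (as well as $b\in\RR^n$). With $a\in H^n$ one instead gets $\bar p(a+\tau b)=\overline{p(\bar a+\bar\tau b)}$ with $\bar a\ne a$, so the restriction of $\bar p$ is \emph{not} $\bar g$, and the factor-by-factor one-variable estimate does not apply to the pair you have. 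The fix is simply to take $a\in\RR^n$ and $b\in\RR^n_{>0}$: every point $z^0\in H^n$ lies on such a line (take $b_j=\Im z_j^0>0$, $a_j=\Re z_j^0$, and $\tau=i$), the line maps $H$ into $H^n$ since each $b_j>0$, the univariate restriction $g(\tau)=p(a+\tau b)$ then has no zeros in $H$, and $\bar p(a+\tau b)=\bar g(\tau)$ holds identically, so the estimate $(y-\Im\alpha_j)^2\geq(y+\Im\alpha_j)^2$ (valid because $y>0$ and $\Im\alpha_j\le0$) gives $|\bar g(\tau)|\le|g(\tau)|$ and hence $|\bar p(z^0)|\le|p(z^0)|$. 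With that one-word correction the argument is complete.
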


\begin{remark}\label{trans-addz}
For later use we note that Lemma \ref{addz} extends readily to transcendental entire functions by Hurwitz's theorem.  The resulting statement is that $p=q+wr\in\PPP(H^n)$ if and only if $q+wr$ is real stable; that is, $q+wr\in\PPP(H^n\times H)\cap\RR[[z_1,\ldots,z_n,w]]$.
\end{remark}

\begin{lemma}\label{one}
Let $f(z_1,\ldots,z_n)$ and $g(z_1,\ldots,z_n)$ be multivariate entire functions, let $\Omega \subset \CC^n$ be a connected set, and let $C \subset \CC$ be a closed set such that $\CC \setminus C$ has exactly two connected components $A$ and $B$. 
If $h=f+z_{n+1} g$ is non-zero on $\Omega \times C$ and $g$ is non-zero on $\Omega$ (or $g \equiv 0$), then $h$ is either non-zero on $\Omega \times (A \cup C)$ or $\Omega \times (B \cup C)$. 
\end{lemma}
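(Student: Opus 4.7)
The plan is to reduce the lemma to a statement about the unique zero of $h$ in the variable $z_{n+1}$. First I would dispose of the degenerate case: if $g \equiv 0$, then $h = f$ is independent of $z_{n+1}$, so the hypothesis that $h$ is non-zero on $\Omega \times C$ forces $f$ to be non-zero on $\Omega$, and hence $h$ is non-zero on all of $\Omega \times \CC$, which trivially gives the conclusion.

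Now assume $g$ is non-zero on $\Omega$. The key observation is that for each fixed point $\zeta = (z_1,\ldots,z_n) \in \Omega$, the function $z_{n+1} \mapsto h(\zeta,z_{n+1}) = f(\zeta) + z_{n+1}g(\zeta)$ is an affine function of $z_{n+1}$ with non-zero leading coefficient $g(\zeta)$, so it has exactly one zero, namely
\[
\varphi(\zeta) := -\frac{f(\zeta)}{g(\zeta)}.
\]
By the hypothesis that $h \neq 0$ on $\Omega \times C$, this unique zero satisfies $\varphi(\zeta) \in \CC \setminus C = A \cup B$ for every $\zeta \in \Omega$.

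Next I would invoke a connectedness argument. Since $g$ is non-vanishing on $\Omega$, the map $\varphi$ is holomorphic, and in particular continuous, on the connected set $\Omega$. Therefore $\varphi(\Omega)$ is a connected subset of $A \cup B$. Because $A$ and $B$ are the two connected components of $\CC \setminus C$, and hence are disjoint open sets whose union is $A \cup B$, the connected image $\varphi(\Omega)$ must lie entirely in one of them, say $\varphi(\Omega) \subseteq A$ (the other case is symmetric).

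Finally, I would translate this back into a statement about $h$. If $\varphi(\Omega) \subseteq A$, then for every $\zeta \in \Omega$ the only zero of $h(\zeta,\cdot)$ lies in $A$, so $h$ is non-zero on $\Omega \times (B \cup C)$; similarly, $\varphi(\Omega) \subseteq B$ yields non-vanishing on $\Omega \times (A \cup C)$. There is no real obstacle here: the only thing to take care of is the two degenerate situations (the case $g \equiv 0$ already handled, and verifying that one does not need to worry about $f/g$ being undefined on $\Omega$, which is covered precisely by the hypothesis that $g$ is non-zero on $\Omega$). The heart of the argument is the elementary but crucial fact that a continuous image of a connected set cannot meet two disjoint open components.
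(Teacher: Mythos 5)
Your proof is correct and takes essentially the same approach as the paper: both identify the unique $z_{n+1}$-root $\varphi(\zeta) = -f(\zeta)/g(\zeta)$, observe it avoids $C$, and use connectedness of $\Omega$ and continuity of $\varphi$ to conclude the image lies entirely in $A$ or entirely in $B$. Your write-up is a bit more explicit about the degenerate case $g\equiv 0$ and about why $\varphi$ is continuous, but there is no substantive difference.
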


\begin{proof}
Clearly we may assume that $g \not \equiv 0$. By assumption 
$$
\zeta \in \Omega \mbox{ implies } - \frac{f(\zeta)}{g(\zeta)} \notin C.
$$
Connectivity prevents the event that $-f(\zeta_1)/g(\zeta_1)\in A$ and $-f(\zeta_2)/g(\zeta_2)\in B$ for some $\zeta_1, \zeta_2 \in \Omega$. Hence $-f(\zeta)/g(\zeta)\in A$ for all $\zeta \in \Omega$ or  $-f(\zeta)/g(\zeta)\in B$ for all $\zeta \in \Omega$, and the lemma follows.
\end{proof}

Lemma \ref{spaces} below describes linear spaces of multivariate entire functions which are non-zero on a given region.  A case of special relevance in Lemma \ref{spaces} is that of a linear space of stable polynomials, which is proved in \cite[Lemma 3.2]{BB2} in a less general form.

\begin{lemma}\label{spaces}
Let $V$ be a $\KK$-linear space of multivariate entire functions which are non-zero on $\Omega$, 
where $\KK=\RR$ or $\CC$ and $\Omega \subseteq \CC^n$ is a non-empty set.
\begin{enumerate}\renewcommand\theenumi{\roman{enumi}}
\item If $\KK=\RR$, $\Omega$ has non-empty interior and each non-zero element of $V$ is non-zero on $\Omega$,
then $\dim_\RR V \leq 2$.
\item
If $\KK=\CC$ and each non-zero element of $V$ is non-zero on $\Omega$,  
then $\dim_\CC V \leq 1$.
\end{enumerate}
\end{lemma}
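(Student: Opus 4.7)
The plan is to deduce both parts of the lemma from a single observation: evaluation at any point of $\Omega$ embeds $V$ as a $\KK$-subspace of $\CC$, so the dimension bounds $1$ and $2$ transfer directly to $V$.

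To carry this out, I would first pick any $\zeta_0 \in \Omega$, which is possible since $\Omega$ is nonempty, and consider the $\KK$-linear evaluation map
\[
\mathrm{ev}_{\zeta_0}: V \longrightarrow \CC, \qquad f \longmapsto f(\zeta_0),
\]
with $\CC$ viewed as a $\KK$-vector space. The hypothesis that every non-zero element of $V$ is non-zero on $\Omega$---read as \emph{nowhere vanishing on $\Omega$}, the natural interpretation in the surrounding context where $\PP(\Omega)$ consists of $\Omega$-stable polynomials---is precisely the statement that $\ker \mathrm{ev}_{\zeta_0} = \{0\}$. Hence $\mathrm{ev}_{\zeta_0}$ is injective and $\dim_\KK V \leq \dim_\KK \CC$. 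For $\KK = \CC$ the right-hand side equals $1$, giving~(ii); for $\KK = \RR$ it equals $2$, giving~(i).

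The main (and essentially only) obstacle is semantic: one must commit to the interpretation of ``non-zero on $\Omega$'' as nowhere vanishing. Under this reading the proof is immediate and the non-empty interior hypothesis in (i) plays no role, so it appears either to match the open regions arising in subsequent applications or to accommodate an alternative argument. Such an alternative---appropriate for the weaker reading $f|_\Omega \not\equiv 0$---would proceed by taking three $\RR$-linearly independent $f, g, h \in V$ and showing that if the map $\zeta \mapsto (g(\zeta), h(\zeta)) \in \CC \cong \RR^2$ fails to be $\RR$-linearly independent at every $\zeta$ in the interior of $\Omega$, then the holomorphic ratio $h/g$ is real-valued on a nonempty open subset of $\Omega$, hence locally constant by the open mapping theorem and therefore $h = cg$ globally by the identity theorem, contradicting $\RR$-linear independence; otherwise one solves $f(\zeta_0) = \alpha g(\zeta_0) + \beta h(\zeta_0)$ for real $\alpha,\beta$ at a good point $\zeta_0$ and obtains a nonzero element of $V$ vanishing at $\zeta_0$. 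This is where the non-empty interior hypothesis would become essential.
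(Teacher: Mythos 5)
Your proof is correct, and for part (i) it follows a genuinely simpler route than the paper's. For part (ii) the two approaches coincide: the paper's argument (take $f,h$ $\CC$-independent, note $f - (f(\omega)/h(\omega))h$ vanishes at $\omega\in\Omega$ yet is a non-zero element of $V$, contradiction) is exactly the observation that $\mathrm{ev}_\omega$ is injective. For part (i), however, the paper reduces to $\Omega$ open and connected, takes three $\RR$-independent elements $f_1,f_2,f_3$, applies Lemma \ref{one} and Hurwitz's theorem to promote non-vanishing from $\Omega\times\RR^2$ to $\Omega\times H^2$, deduces that $f_2+\zeta f_3$ is nowhere zero on $\Omega$ for every $\zeta\in\CC$, and only then invokes (ii). You observe instead that the same $\RR$-linear evaluation map $\mathrm{ev}_{\zeta_0}$ is injective into $\CC\cong\RR^2$, which gives $\dim_\RR V\le 2$ immediately, with no stability machinery and, as you correctly note, without the non-empty interior hypothesis; that hypothesis is used in the paper only because its own route passes through Lemma \ref{one}. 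One small caveat: the closing aside about an alternative argument ``appropriate for the weaker reading $f|_\Omega\not\equiv 0$'' is off the mark. For $\Omega$ with non-empty interior, that hypothesis is vacuous by the identity theorem (any non-zero entire function satisfies $f|_\Omega\not\equiv 0$), so the lemma is simply false under that reading (e.g. $V=\RR_{10}[z]$), and no argument can rescue it; what you sketch there is in fact a longer proof under the correct nowhere-vanishing interpretation, not a fix for a weaker one.
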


\begin{proof}
We first prove (ii). Suppose that there are two linearly independent entire functions $f$ and $h$ in $V$. Then the function $G(\xi,\zeta)=f(\xi)+\zeta h(\xi)$ is non-zero on $\Omega$ for all $\zeta \in \CC$. Given any $\omega \in \Omega$, $h(\omega)\neq 0$ by definition, and there is a zero of $G(\omega,\zeta)$ at $\zeta_0 =- f(\omega)/h(\omega)$. Therefore, $f+\zeta_0 h$ has a zero in $\Omega$, and this is a contradiction. 

To prove (i) we may assume that $\Omega$ is open and connected, since otherwise we may replace $\Omega$ by an open and connected subset of $\Omega$. Suppose that there are three linearly 
independent functions $f_1,f_2$  and $f_3$ 
in $V$. Then $f_1+vf_2+wf_3$ is non-zero on $\Omega \times \RR^2$. By possibly multiplying $f_2$ or $f_3$ by $-1$, Lemma \ref{one} implies that 
$f_1+vf_2+wf_3$ is non-zero on $\Omega \times H^2$. The entire function $\lambda^{-1}( f_1+\lambda vf_2+\lambda wf_3)$ is non-zero on $\Omega \times H^2$ for all $\lambda>0$, so that by Hurwitz's Theorem $vf_2+wf_3$ is non-zero on $\Omega \times H^2$.  We claim that $f_2+\zeta f_3$ is non-zero on $\Omega$  for all $\zeta \in \CC$. Indeed, $w/v$ attains all values in $\CC$ except non-negative real values for $w,v\in H$. However, $f_2+\zeta f_3$ is assumed to be non-zero on $\Omega$ for real values of $\zeta$ also, so the claim follows.  This gives a contradiction by what has already been established in (ii). 
\end{proof}

\begin{lemma}\label{line}
Let $T : \CC_n[z] \rightarrow \CC[z]$ be a linear operator of rank greater than one.  Let $S\subset\CC$ be an open non-empty strip, let $L\subset S$ be any line in $S$, let the half-planes $H_1$ and $H_2$ be the connected components of the complement of $L$, and let $\Omega \subseteq \CC$ be a connected set with non-empty interior.  

If $T : \PP_n(S') \rightarrow \PP(\Omega)\cup\{0\}$, then $T : \PP_n(\overline{H_1}) \rightarrow \PP(\Omega)\cup\{0\}$ or $T : \PP_n(\overline{H_2}) \rightarrow \PP(\Omega)\cup\{0\}$. 
\end{lemma}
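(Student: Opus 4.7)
My plan is to pass to the multi-affine symmetric polarization of the symbol and reduce the claim, one variable at a time, to a connectivity statement handled by Lemma \ref{one}, with the rank $>1$ hypothesis entering through Lemma \ref{spaces}.

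I will introduce $H(x;w_1,\ldots,w_n) = T\bigl(\prod_{j=1}^n(z-w_j)\bigr)$, the multi-affine symmetric polarization of the algebraic symbol $G(x,w) = T((z-w)^n)$. Since $\prod_j(z-w_j) \in \PP_n(S)$ whenever every $w_j$ lies in the closed set $\CC\setminus S$, the hypothesis forces $H$ to be non-zero on $\Omega \times (\CC\setminus S)^n$. To prove the lemma it suffices to establish the stronger statement that $H(\cdot;w_1,\ldots,w_n)$ lies in $\PP(\Omega)\cup\{0\}$ for every $(w_1,\ldots,w_n) \in \overline{H_j}^n$ and some single $j\in\{1,2\}$; indeed, any $p\in \PP_n(\overline{H_i})$ factors as $c\prod_\ell(z-w_\ell)$ with $w_\ell \in H_{3-i} \subset \overline{H_{3-i}}$, so $T(p) = c\,H(x;w_1,\ldots,w_n)$ would inherit the desired non-vanishing on taking $j = 3-i$.

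Next I fix $(w_2,\ldots,w_n) \in (\CC\setminus S)^{n-1}$ and view $H$ as the affine polynomial $\alpha(x;w_{-1}) + \beta(x;w_{-1})\,w_1$ in the remaining variable. The non-vanishing property translates to the meromorphic map $-\alpha/\beta \colon \Omega \to \PO^1$ avoiding $\CC\setminus S$, hence landing in $S \cup \{\infty\}$. Because $\Omega$ has non-empty interior and the map is holomorphic where defined, the open mapping theorem confines the image to the open set $S$. A careful application of Lemma \ref{one} with $C = L$, together with the connectivity of $\Omega$, then traps the image inside a single closed half-plane $\overline{H_{j(w_{-1})}} \cap S$ of the line $L$.

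The delicate final step is to show that $j(w_{-1})$ does not depend on the auxiliary parameters $(w_2,\ldots,w_n)$, so that iterating the one-variable argument across all coordinates produces a single $j$ with $H \neq 0$ on $\Omega \times \overline{H_j}^n$. The rank $>1$ hypothesis is essential here and is invoked through Lemma \ref{spaces}(ii): if two configurations $(w_2,\ldots,w_n)$ and $(w_2',\ldots,w_n')$ produced opposite chiralities, a suitable linear combination of the corresponding slices of $H$ would assemble a two-dimensional $\CC$-linear space of polynomials all non-vanishing on $\Omega$ (the non-empty interior of $\Omega$ is used here), contradicting Lemma \ref{spaces}(ii). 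I expect the principal obstacle to be precisely this consistency step---translating the rank condition into uniform behaviour of $j$ across parameter space---while the remaining pieces are natural applications of polarization and Lemma \ref{one}.
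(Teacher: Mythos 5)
Your plan takes a genuinely different route from the paper: instead of the Hermite--Biehler decomposition $p=q+ir$ and the homotopy argument of the paper, you polarize the symbol and slice one coordinate at a time. Unfortunately, the crucial middle step does not go through, so there is a real gap.

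The problem is the ``trapping'' step. Fixing $(w_2,\ldots,w_n)$, you correctly reduce to understanding the image of the meromorphic function $-\alpha/\beta$ on $\Omega$, and you observe it avoids $\CC\setminus S$ so lands in $S\cup\{\infty\}$ (and in $S$ after the open-mapping argument). But $S$ is \emph{connected}: a connected subset of $S$ need not lie in $\overline{H_1}\cap S$ or $\overline{H_2}\cap S$; it may straddle $L$. Nothing in the hypothesis forces $-\alpha/\beta$ to miss the line $L$, because $L\subset S$ and the non-vanishing of $H$ is only known for $w_1\in\CC\setminus S$, which is disjoint from $L$. Consequently, Lemma~\ref{one} with $C=L$ cannot be applied --- its hypothesis, non-vanishing on $\Omega\times L$, is unavailable --- and the announced conclusion ``the image lies inside a single closed half-plane of $L$'' simply does not follow. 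This is not a technical repair-job; it is the step where the dichotomy in the lemma is supposed to be produced, so the argument stalls exactly where it matters. (For a quick sanity check, try $T=\mathrm{id}$ on $\CC_1[z]$: there $-\alpha/\beta(x)=x$ restricted to $\Omega$, and the image is $\Omega$ itself, which has no reason to avoid $L$.) Relatedly, Lemma~\ref{one} is designed to be used with $C$ a closed set whose complement has two components; if one wants a half-plane dichotomy out of it, the relevant $C$ is a closed strip, not the line $L$, and one first needs the non-vanishing of $H$ on $\Omega\times\overline{S}$, again not directly provided by the hypothesis.

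The paper sidesteps all of this. It decomposes a polynomial with zeros in one open half-plane as $p=q+ir$, uses Hermite--Biehler/Hermite--Kakeya to see that $q+\alpha r$ has only real simple zeros for every $\alpha\in\RR$, and forms the \emph{two-variable} polynomial $F_p(z,w)=T(q)(z)+wT(r)(z)$. The rank hypothesis (via Lemma~\ref{spaces}) guarantees that $F_p$ is non-vanishing on $\Omega\times\RR$, and then Lemma~\ref{one} applies cleanly with $C=\RR$, whose complement has the two half-planes as components. That is where the half-plane dichotomy comes from. The consistency of the choice across all $p$ is obtained not through a second use of Lemma~\ref{spaces} but through a continuity/homotopy argument between two polynomials with allegedly opposite chirality, which forces a $p_t$ whose $F_{p_t}$ is stable on $\Omega\times\CC$, contradicting the reasoning already used in the proof of Lemma~\ref{spaces}~(ii). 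Your consistency heuristic via Lemma~\ref{spaces}~(ii) is in the right spirit, but as written it is a sketch: it does not explain how two slices with opposite chirality produce a two-dimensional $\CC$-linear space \emph{all of whose non-zero elements} are non-vanishing on $\Omega$, which is what that lemma requires. I would also note that your announced target --- showing $H\neq 0$ on $\Omega\times\overline{H_j}^n$ --- handles only degree-$n$ inputs via factorization; lower-degree elements of $\PP_n(\overline{H_i})$ need a limiting argument that is not mentioned. In short, the polarization idea is an interesting alternative, but the argument as presented has a genuine hole precisely at the dichotomy step; revisiting it with $F_p(z,w)$ (as in the paper) rather than the full $n$-variable polarization is the cleaner and correct route.
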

\begin{proof}
By a change of variables we may assume that $L$ is the real line and $H_1=H$, the open upper half-plane. Assume first that there is a polynomial $p(z)$ of degree $n$ with only real and simple zeros for which 
$T(p)(z) \equiv 0$. Then, by Hurwitz's theorem,  for each polynomial $q(z) \in \CC_n[z]$ there is a number $\epsilon>0$ for which $p(z)+\epsilon q(z)\in\PP(S')$. Since 
$\epsilon T(q) = T(p+\epsilon q)$, we have $T(q) \in \PP(\Omega)\cup\{0\}$. We conclude that $T(\CC_n[z]) \subseteq \PP(\Omega)\cup\{0\}$ and by Lemma \ref{spaces} that 
$\dim_\CC(T(\CC_n[z])) \leq 1$, which contradicts the hypothesis. Hence the image of each polynomial of degree $n$ with only real and simple zeros is $\Omega$-stable. 

Suppose that $p(z)=q(z)+ir(z)$ has all its zeros in $H_2$. By the Hermite--Biehler Theorem \cite[p.~197]{RS} and the Hermite-Kakeya Theorem \cite[p.~198]{RS}, $q(z)+\alpha r(z)$ has only real and simple zeros for all $\alpha \in \RR$. By what we observed above, $F_p(z,w) := T(q)(z)+wT(r)(z)$ is $\Omega \times \RR$-stable. Lemma \ref{one} now implies that $F_p(z,w)$ is either $\Omega \times \overline{H_1}$-stable or 
$\Omega \times \overline{H_2}$-stable. 

Now assume that there are two polynomials $p_j(z)=q_j(z)+ir_j(z)$, $j=1,2$, that have all their zeros in $H_2$ and such that $F_{p_j}(z,w)$ is $\Omega \times \overline{H_j}$-stable. Let 
$p_t$, $t \in [1,2]$, be a homotopy between $p_1$ and $p_2$ such that $F_{p_t}$ is $\Omega\times\RR$-stable for $1\le t\le 2$. It follows that there is a $t \in [1,2]$ for which $F(p_t)=T(q_t)+wT(r_t)$ is $\Omega \times \overline{H_1}$-stable and  
$\Omega \times \overline{H_2}$-stable. Thus, $F(p_t)$ is $\Omega \times \CC$ stable, which cannot happen as observed in the proof of Lemma \ref{spaces} (ii). 

We have proved that either $F_p$ is $\Omega \times H_1$-stable for all $p$ that have all zeros in $H_2$, or $F_p$ is $\Omega \times H_2$-stable for all $p$ that have all zeros in $H_2$. Setting $w=i$ in $F_p$, the Hermite--Biehler theorem now implies the conclusion of the lemma. 
\end{proof}

\begin{remark}\label{line-lemma-remark}
If $S$ is an open strip or a closed strip with non-zero interior and  $T : \PP_n(S') \rightarrow \PP(\Omega)\cup\{0\}$ is a linear operator with rank greater than one, then for any line in $S$ with complement $H_1\cup H_2$, 
\begin{equation}\label{eq-flip}
T : \PP_n(H_1) \rightarrow \PP(\Omega)\cup\{0\} \text{ or } T : \PP_n(H_2) \rightarrow \PP(\Omega)\cup\{0\}.  
\end{equation}
This follows immediately from Lemma \ref{line} and a limiting argument, by first showing \eqref{eq-flip} for the open strip, and then letting the lines approach the edge from the interior of a closed strip.
\end{remark}

We now give the proofs of Theorems \ref{strip} and \ref{trans-strip-char}, which are stated in the introduction.  

\begin{proof}[Proof of Theorem \ref{strip}]
If $T$ has rank one or less then it is clear that it must have form (i), whose sufficiency is clear also, so assume otherwise.
If $T$ satisfies conditions (a) and (b) of (ii), then it follows immediately that $T$ satisfies one of four conditions:
\begin{enumerate}
\item $T:\PP(H_1)\to\PP(H_1)\cup\{0\}$ and $T:\PP(H_1)\to\PP(H_2)\cup\{0\}$,
\item $T:\PP(H_1)\to\PP(H_1)\cup\{0\}$ and $T:\PP(H_2)\to\PP(H_2)\cup\{0\}$,
\item $T:\PP(H_1)\to\PP(H_2)\cup\{0\}$ and $T:\PP(H_2)\to\PP(H_1)\cup\{0\}$,
\item $T:\PP(H_2)\to\PP(H_1)\cup\{0\}$ and $T:\PP(H_2)\to\PP(H_2)\cup\{0\}$.
\end{enumerate} 
In any case, $T:\PP(H_1\cup H_2)\to\PP(H_1\cup H_2)\cup\{0\}$, and thus the sufficiency direction is clear.

To prove necessity, suppose that $T$ preserves $H_1 \cup H_2$-stability. For an open set $U,$ define the superscript notation
$$
U^r= \Int(\CC \setminus U).
$$
By Lemma \ref{line} and Remark \ref{line-lemma-remark}, either $T: \PP_n(H_1) \rightarrow \PP(H_1)\cup\{0\}$, or $T: \PP_n(H_1^r) \rightarrow \PP(H_1)\cup\{0\}$.
We first prove the following claim, from which the proof of necessity quickly follows. 
\\
\\
{\bf Claim : } \emph{If $T: \PP_n(H_1^r) \rightarrow \PP(H_1)\cup\{0\}$, then $T: \PP_n(H_2) \rightarrow \PP(H_1)\cup\{0\}$ or $T: \PP_n(H_1) \rightarrow \PP(H_1)\cup\{0\}$ .} 
\\
\\
Let $0 \leq t \leq 1$, and define
$$
K_t = \{ z\in\CC :  z=t z_1 + (1-t) z_2,~z_1\in H_2,~z_2\in H_1^r\}  
$$
one may check that $K_t$ is a half-plane for each $t\in[0,1]$, and that it is a continuous parametrization between $K_0=H_1^r$ and $K_1=H_2$.  
As above, Lemma \ref{line} and Remark \ref{line-lemma-remark} imply that $T: \PP_n(K_t) \rightarrow \PP(H_1)\cup\{0\}$ or $T: \PP_n(K_t^r) \rightarrow \PP(H_1)\cup\{0\}$ for each $0 \leq t \leq 1$. If $T: \PP_n(K_t) \rightarrow \PP(H_1)\cup\{0\}$ for all $0 < t <1$, then by Hurwitz's theorem  $T: \PP_n(K_1) \rightarrow \PP(H_1)\cup\{0\}$ and we have proved the claim. Assume therefore that $T: \PP_n(K_s^r) \rightarrow \PP(H_1)\cup\{0\}$ for some $0<s<1$. We separately address the following two possible cases:  
\begin{itemize}
\item[(A)] There is an $\epsilon>0$ for which $T: \PP_n(K_t) \rightarrow \PP(H_1)\cup\{0\}$ for all $0 \leq t <\epsilon$. 
\item[(B)] There is no $\epsilon>0$ for which $T: \PP_n(K_t) \rightarrow \PP(H_1)\cup\{0\}$ for all $0 \leq t <\epsilon$. 
\end{itemize}
If (A) holds, then by Hurwitz's theorem  there is a smallest $s$, $0<s<1$, such that $T: \PP_n(K_s^r) \rightarrow \PP(H_1)\cup\{0\}$.  To show this, let $S$ be the set of all values of $t$ for which $T: \PP_n(K_t^r) \rightarrow \PP(H_1)\cup\{0\}$, and suppose that $s_{\text{min}}=\inf S\not\in S$.  Then there must exist a countable sequence of values $\{s_\nu\}_{\nu=0}^\infty$ in $S$ with $s_\nu\to s_{\text{min}}$.  Since $T: \PP_n(K_{s_\nu}^r) \rightarrow \PP(H_1)\cup\{0\}$ for all $\nu=0, 1, 2, \ldots$, and $H_1$ is an open set, it follows by Hurwitz's Theorem that $T: \PP_n(K_{s_{\text{min}}}^r) \rightarrow \PP(H_1)\cup\{0\}$, which contradicts our assumption $s_{\min}\not\in S$. Therefore, it must be that $s_{\text{min}}\in S$, and henceforth let $s=s_{\min}$.  Note that by Hurwitz's theorem, $T: \PP_n(K_s) \rightarrow \PP(H_1)\cup\{0\}$ as well, by approaching $s$ from below.  Thus, $T: \PP_n(K_s^r)\cup\PP_n(K_s) \rightarrow \PP(H_1)\cup\{0\}$.  Let $G(z,w)= T[(z-w)^n]$. By what we have observed, for each $v \in \CC$, $G(z,v) \in \PP(H_1)\cup\{0\}$, since $(z-v)^n\in \PP_n(K_s)\cup \PP_n(K_s^r)$ for all $v\in\CC$. Suppose that $G(z,v) \equiv 0$ for some $v \in \CC$. If $v$ is in the interior of the strip, then for each $q(z) \in \CC_n[z]$ there is an $\epsilon >0$ such that all zeros of $(z-v)^n +\varepsilon q(z)$ are in the interior of the strip. It follows that $T(q)=T(\varepsilon q + G(z,v))/\varepsilon \in \PP(H_1)\cup\{0\}$, thus $T(\CC_n[z])\subseteq\PP(H_1)$, and we arrive at a contradiction by Lemma \ref{spaces}. When $v$ is not in the interior of the strip, it is in the interior of either $K_s$ or $K_s^r$.  Since $T: \PP_n(K_s^r) \rightarrow \PP(H_1)\cup\{0\}$ and 
$T: \PP_n(K_s) \rightarrow \PP(H_1)\cup\{0\}$, the same argument may be applied to show that if  $G(z,v) \equiv 0$ for $v\in\CC$ outside the interior of the strip, then $T(\CC_n[z])\subseteq\PP(H_1)$, and we arrive at a contradiction by Lemma \ref{spaces} again.  We conclude that $G(z,w)$ is $H_1 \times \CC$-stable. By the fundamental theorem of algebra, this cannot happen unless $T(z^k) \equiv 0$ for all $k >1$; i.e., $T$ has rank at most one.  Since by assumption the rank of $T$ is $2$ or more, case (A) cannot occur. 

If (B) holds, then by Hurwitz's theorem  $T: \PP_n(H_1) \rightarrow \PP(H_1)\cup\{0\}$, and this completes the proof of the claim.

From Lemma \ref{line} and the claim, we may conclude that $T: \PP_n(H_1) \rightarrow \PP(H_1)\cup\{0\}$ or $T: \PP_n(H_2) \rightarrow \PP(H_1)\cup\{0\}$.  By symmetry, the same statement must hold when $\PP(H_2)$ is the range; that is, $T: \PP_n(H_1) \rightarrow \PP(H_2)\cup\{0\}$ or $T: \PP_n(H_2) \rightarrow \PP(H_2)\cup\{0\}$.  This completes the proof of necessity.
\end{proof}

\begin{proof}[Proof of Theorem \ref{trans-strip-char}] Set $H_2=H^*_1$, and let $L_1(p(z))=p(z-i\mu)$ and $L_2(p(z))=p(-z-i\mu)$, be linear changes of variables which map $\PP(H)$ to $\PP(H_1)$ and $\PP(H_2)$ respectively.
By the transcendental characterization of stability preservers \cite[Theorem 6]{BB1} and Theorem \ref{strip},  satisfying both \eqref{upper} and \eqref{lower}  in Theorem \ref{trans-strip-char} (also see conditions (I) and (II) below) implies that $T$ preserves $H_1 \cup H_2$-stability. For the opposite direction, if $T$ preserves $H_1 \cup H_2$-stability, then Theorem \ref{strip} part (ii) holds for all degrees $n$.  From the algebraic characterization of stability preservers  in \cite[Theorem 4]{BB1}, conditions (a) and (b) of Theorem \ref{strip}, for an operator of rank one or more, are equivalent to the following conditions (with the identically zero function forbidden):
\renewcommand\theenumi{\Alph{enumi}}
\begin{enumerate}
\item\label{alg1} $L^{-1}_1TL_1((z+w)^n)\in\PP(H\times H)\cup\{0\}$ or $L^{-1}_1TL_2((z+w)^n)\in\PP(H\times H)\cup\{0\}$; 
\item\label{alg2} $L^{-1}_2TL_1((z+w)^n)\in\PP(H\times H)\cup\{0\}$ or $L^{-1}_2TL_2((z+w)^n)\in\PP(H\times H)\cup\{0\}$. 
\end{enumerate}
\noindent
If one of the conditions in \eqref{alg1} or \eqref{alg2} holds, then it holds for all degrees $m\le n$ as well \cite[Lemma 4]{BB1}.  Thus if $T$ preserves $H_1\cup H_2$-stability on $\CC[z]$, then one of the conditions in \eqref{alg1} holds for all $n$ and one of the conditions in \eqref{alg2} holds for all $n$.   The statement of the theorem then follows from the equivalence of the algebraic and transcendental characterizations of stability preservers given in \cite[Corollary 2]{BB1} and \cite[Theorem 6]{BB1}.  Namely, an operator $T$ of rank one or more preserves $H_1\cup H_2$ stability if and only if it satisfies 
\renewcommand\theenumi{\Roman{enumi}}
\begin{enumerate}
\item $L^{-1}_1TL_1(e^{-zw})\in\PP(H\times H)\cup\{0\}$ or $L^{-1}_1TL_2(e^{-zw})\in\PP(H\times H)\cup\{0\}$; 
\item $L^{-1}_2TL_1(e^{-zw})\in\PP(H\times H)\cup\{0\}$ or $L^{-1}_2TL_2(e^{-zw})\in\PP(H\times H)\cup\{0\}$. 
\end{enumerate}
\noindent
These conditions simplify to those stated in the theorem.
\end{proof}

An operator $T$ which preserves $\SS_\mu(\RR)$, $\mu\ge0$, will be referred to as a \emph{real strip preserver} (to be used in the sequel).
If the operator $T$ in Theorem \ref{trans-strip-char} commutes with the translation operator and the strip $\CC\setminus(H_1\cup H_2)$ is symmetric about the real axis, then the conditions in Theorem \ref{trans-strip-char} simplify to those for preservers of polynomials with only real zeros, which are characterized in \cite[Theorem 5]{BB1}.  This provides a characterization of the differential operators with constant coefficients which preserve the set of polynomials whose zeros all lie in a strip.  

\begin{corollary}\label{strip-diff-op}
If $f(z)$ is a formal power series with real coefficients, then $f(D)$ preserves $\SS_\mu$, $\mu\ge0$ if and only if $f\in\LP$.
\end{corollary}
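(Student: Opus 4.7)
The plan is to establish sufficiency by an approximation argument and necessity by applying Theorem \ref{trans-strip-char} while exploiting the translation-invariance of $f(D)$.

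For sufficiency, assume $f\in\LP$. By the Laguerre--P\'olya description of the class, $f$ is a local uniform limit of a sequence $\{f_n\}$ of polynomials with only real zeros. Fix a polynomial $p\in\SS_\mu$ of degree $N$; since $D^k p \equiv 0$ for $k>N$, the sum $f_n(D)p$ depends only on the first $N{+}1$ coefficients of $f_n$, which converge to those of $f$, so $f_n(D)p\to f(D)p$ locally uniformly in $z$. Each $f_n(D)$ preserves $\SS_\mu$ by Example \ref{ex-finite-diff}: its coefficient-polynomial $F_{T_n}(z,w)=f_n(w)$ is a polynomial in $w$ alone with only real zeros and therefore lies in $\PP(H_1\times H)\cap\PP(H_2\times H^*)$. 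Hurwitz's theorem then yields $f(D)p\in\SS_\mu\cup\{0\}$.

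For necessity, assume $f(D)$ preserves $\SS_\mu$ and set $T=f(D)$, whose symbol is $\overline{G}_T(z,w)=f(-w)e^{-zw}$. We apply Theorem \ref{trans-strip-char}. Because $T$ is translation-invariant in $z$, its symbol factors as $F(w)\cdot e^{-zw}$ with $F(w):=f(-w)$ a function of $w$ alone. Exactly as in the simplification carried out in Example \ref{ex-finite-diff}, the exponential factors $e^{-(z\mp i\mu)w}$ already belong to $\PPP(H_1\times H)$ and $\PPP(H_2\times H^*)$ respectively, so the four conditions of Theorem \ref{trans-strip-char}(ii)(a)--(b) reduce to conditions on $F$ alone. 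The ``matching'' pair of combinations (say (a)(I) together with (b)(IV), and its conjugate partner) forces $F\in\PPP(H)\cap\PPP(H^*)$ as a function of $w$, which by the Laguerre--P\'olya identification $\LP=\PPP(H\cup H^*)$ (equivalently, the intersection $\PPP(H)\cap\PPP(H^*)$ at the transcendental level) gives $F(w)=f(-w)\in\LP$, and hence $f\in\LP$.

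The main obstacle is to rule out the ``non-matching'' combinations in Theorem \ref{trans-strip-char}(ii), namely those in which (a) and (b) both pair with the same $\pm i\mu$-exponential. Such combinations would only place $f$ in $\PPP(H)$ (or only in $\PPP(H^*)$), leaving room for an extraneous factor $e^{-i\alpha w}$ with $\alpha\neq 0$ in the Hermite--Biehler-type factorization of $F$. But such a factor corresponds to a translation $e^{-i\alpha D}$ shifting polynomial zeros by $i\alpha$, which moves the strip and therefore cannot preserve the symmetric strip $\SS_\mu$. Ruling out these branches --- or, equivalently, observing that the only operators satisfying both (a) and (b) with a common exponential must also satisfy the complementary version --- leaves the matching configuration and completes the reduction to $f\in\LP$.
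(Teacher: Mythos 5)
Your sufficiency argument is correct and essentially self-contained: approximate $f\in\LP$ by real-zeroed polynomials $f_n$, note via Example \ref{ex-finite-diff} that the symbol $F_{T_n}(z,w)=f_n(w)$ is nonvanishing on $H_1\times H$ and on $H_2\times H^*$ because $f_n$ has only real zeros, so each $f_n(D)$ preserves $\SS_\mu$, and conclude by Hurwitz since $f_n(D)p\to f(D)p$ for fixed $p$.

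The necessity half has a genuine gap, one you flag yourself but do not close. You correctly deduce that the ``matching'' pair (a)(I) and (b)(IV) forces $F(w)=f(-w)\in\PPP(H)\cap\PPP(H^*)$ (fix $z_0\to i\mu$ from within $H_1$, respectively $z_0\to -i\mu$ from within $H_1^*$, and let the surviving exponential collapse to $1$). But Theorem \ref{trans-strip-char} only guarantees that \emph{some} clause of (a) and \emph{some} clause of (b) hold, and the sentence claiming that operators satisfying both with ``a common exponential must also satisfy the complementary version'' is an assertion, not an argument; as written, the proof does not actually identify which clauses hold. The missing step is to use the very special form $T=f(D)$: if $N$ is minimal with $f^{(N)}(0)\neq 0$, then $T$ sends $(z-\zeta)^{N+1}$ to the degree-one polynomial $c_N(N{+}1)!\bigl((z-\zeta)+c_{N+1}/c_N\bigr)$, whose zero differs from $\zeta$ by a fixed constant. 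Taking $\zeta=ia$ with $a\to+\infty$ gives $p_a\in\PP(H_2)$ with $T(p_a)\notin\PP(H_1)\cup\{0\}$, so (a)(II) is impossible; symmetrically (b)(III) is impossible. This singles out (a)(I)$+$(b)(IV), after which your derivation finishes.

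It is worth noting that the route the paper gestures at is different and sidesteps the case analysis entirely: since $f(D)$ commutes with $p(z)\mapsto p(z+ic)$, and $p(z+ic)\in\SS_\mu$ whenever $p$ has only real zeros and $|c|\le\mu$, the zeros of $f(D)p$ lie in $\bigcap_{|c|\le\mu}\{c-\mu\le\Im\,z\le c+\mu\}=\RR$. Thus $f(D)$ preserves polynomials with only real zeros, and $f\in\LP$ by the known characterization of such differential operators. This is exactly the ``commutes with translation plus strip symmetric about $\RR$'' simplification the paper mentions, and it is shorter and cleaner than the symbol-level case analysis you attempt.
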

\noindent
The sufficiency direction of Corollary \ref{strip-diff-op} was already known to P\'olya \cite[pp 155--158]{RS}.

A \emph{multiplier sequence} is a real sequence, $\{\gamma_k\}_{k=0}^\infty$, which defines a linear operator $T:\RR[z]\to\RR[z]$ by $T(z^k)=\gamma_kz^k$ that preserves reality of zeros.  It is elementary to show that the terms of a multiplier sequence must either have the same sign or alternate in sign.  G. Csordas and T. Craven characterized multiplier sequences that map any polynomial $p\in\CC[z]$ to a polynomial whose zeros lie in the convex hull formed of the zeros of $p$ and the origin \cite{CCGL}: the sequence $\{\gamma_k\}_{k=0}^\infty$ must be a constant multiple of a non-negative, non-decreasing multiplier sequence.  In addition to these sequences, those with alternating signs also correspond to strip preserving operators.

\begin{proposition}\label{c-strip-mult}
Let $\{\gamma_k\}_{k=0}^\infty$ be a real sequence.  The linear operator $T:\CC[z]\to\CC[z]$ defined by $T(z^k)=\gamma_kz^k$ preserves $\SS_\mu$ if and only if either $\{\gamma_k\}_{k=0}^\infty$ or $\{(-1)^k\gamma_k\}_{k=0}^\infty$ is a constant multiple of a non-decreasing, non-negative multiplier sequence.
\end{proposition}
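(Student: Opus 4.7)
My plan is to prove the two implications separately. For sufficiency, I invoke the Craven--Csordas theorem cited just before the proposition: if $\{\gamma_k\}$ is a multiple of a non-negative, non-decreasing multiplier sequence, then $T$ sends any $p\in\CC[z]$ with zeros $\xi_1,\dots,\xi_n$ to one whose zeros lie in the convex hull of $\{0,\xi_1,\dots,\xi_n\}$. Since the strip $\{z:|\Im z|\le\mu\}$ is convex and contains the origin, that convex hull stays inside the strip whenever $p\in\SS_\mu$, so $T(p)\in\SS_\mu$. The alternating-sign case reduces to this one via the involution $p(z)\mapsto p(-z)$, which preserves $\SS_\mu$ (the strip is symmetric under $z\mapsto -z$) and conjugates $\{\gamma_k\}$ to $\{(-1)^k\gamma_k\}$.

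For necessity, I first show $\{\gamma_k\}$ is a multiplier sequence. Given any real-rooted $p\in\RR[z]$ and any $\lambda>0$, the polynomial $p(\lambda z)$ is also real-rooted and so lies in $\SS_\mu$. Since $T$ is diagonal, $T(p(\lambda z))=(Tp)(\lambda z)$, so every zero $\eta_j$ of $Tp$ satisfies $|\Im\eta_j|\le\lambda\mu$; letting $\lambda\downarrow 0$ forces $\eta_j\in\RR$. By the classical P\'olya--Schur theorem, $\varphi(x):=\sum_k\gamma_k x^k/k!$ therefore lies in $\LP$ with all zeros of one sign. Since $\{(-1)^k\gamma_k\}$ corresponds to the operator $p\mapsto T(p(-z))$, which preserves $\SS_\mu$ iff $T$ does, and since the overall sign is inconsequential, I may assume $\gamma_k\ge 0$ and $\varphi$ has only non-positive zeros.

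It remains to establish $\gamma_k\le\gamma_{k+1}$ for every $k$. Applying $T$ to the test polynomial $p_k(z)=z^k(z-i\mu)\in\SS_\mu$ yields $z^k(\gamma_{k+1}z-i\mu\gamma_k)$; when $\gamma_{k+1}\ne 0$, the non-trivial root $i\mu\gamma_k/\gamma_{k+1}$ has imaginary part $\mu\gamma_k/\gamma_{k+1}$, and membership in $\SS_\mu$ forces $\gamma_k\le\gamma_{k+1}$ directly. The residual subcase $\gamma_{k+1}=0$, $\gamma_k>0$ (with $k\ge 1$ and $T$ not of rank one) is the main obstacle and requires an auxiliary construction. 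By the P\'olya--Schur structure of $\varphi$, it must be a polynomial $c\prod_{j=1}^k(1+\alpha_j x)$ with $c,\alpha_j>0$, which in particular forces $\gamma_{k-1}>0$. I then apply $T$ to the one-parameter family $q_\alpha(z):=z^{k-1}(z^2+2i\mu z-\alpha^2-\mu^2)$, whose zeros $0$ and $\pm\alpha-i\mu$ all have $|\Im|\le\mu$ for every real $\alpha$, so $q_\alpha\in\SS_\mu$. Using $\gamma_{k+1}=0$, the image $T(q_\alpha)=z^{k-1}[2i\mu\gamma_k z-(\alpha^2+\mu^2)\gamma_{k-1}]$ has non-trivial root with imaginary part $-(\alpha^2+\mu^2)\gamma_{k-1}/(2\mu\gamma_k)$; sending $\alpha\to\infty$ makes this exceed $\mu$ in modulus, contradicting $\SS_\mu$-preservation. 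The rank-one exceptions, where $\varphi$ is a monomial or a constant and $T$ maps $\CC[z]$ into $\CC z^n$, fall under Theorem~\ref{trans-strip-char}(i). The delicate point throughout is identifying $q_\alpha$: the primary test $p_k$ becomes uninformative once its leading coefficient is annihilated, and one needs a family in which $\alpha^2$ enters linearly into the imaginary part of the offending zero so that a bounded strip input produces an unbounded violation, together with the rigidity of $\LP$ generating functions guaranteeing $\gamma_{k-1}>0$.
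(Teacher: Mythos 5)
Your proof is correct, and it takes a genuinely different route from the one in the paper. The paper works entirely through the symbol characterization of Theorem~\ref{trans-strip-char}: it computes $\overline{G}_T(z,w)=\varphi(-zw)$, extracts from the symbol conditions that $\varphi$ has only real zeros of one sign and order at most one, writes the Hadamard factorization, determines the admissible range of the exponential parameter there, and then cites \cite[Lemma 2.2]{CCGL} to translate that constraint into monotonicity of $\{\gamma_k\}$. Both directions are obtained at once from the symbol criterion. By contrast, you split the equivalence: sufficiency follows in one line from the Gauss--Lucas type theorem of Craven--Csordas (non-negative non-decreasing multiplier sequences shrink zeros into the convex hull of the zeros and the origin, and the strip is a convex set containing the origin), while necessity is proved by hand --- the dilation $p(z)\mapsto p(\lambda z)$ commutes with the diagonal operator, so letting $\lambda\downarrow 0$ shows $\{\gamma_k\}$ is a multiplier sequence, and the test polynomials $z^k(z-i\mu)$ force $\gamma_k\le\gamma_{k+1}$ whenever $\gamma_{k+1}\neq 0$. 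Your treatment of the residual case $\gamma_{k+1}=0$, $\gamma_k>0$ is careful and correct: the Tur\'an/Laguerre inequality (equivalently, the Hadamard structure of $\varphi$) forces $\varphi$ to be a polynomial of degree exactly $k$ with positive subleading coefficient $\gamma_{k-1}$, and the family $q_\alpha(z)=z^{k-1}(z^2+2i\mu z-\alpha^2-\mu^2)$ makes the non-trivial zero of $T(q_\alpha)$ escape the strip as $\alpha\to\infty$. What the paper's route buys is uniformity with the rest of the article and a single framework that scales to all of its other symbol-based classifications; what your route buys is a more elementary, self-contained argument that exposes the convexity mechanism behind sufficiency and localizes the monotonicity constraint to explicit two-term test polynomials. (Both proofs tacitly use $\mu>0$, as the statement does.)
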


\begin{proof}
Let the exponential generating function for the sequence $\{\gamma_k\}_{k=0}^\infty$ be denoted by
\[
\varphi(z) = \sum_{k=0}^\infty \frac{\gamma_k}{k!}z^k.
\]
The symbol of $T$ is then $\overline{G}_T(z,w)=\varphi(-zw)$.  By Theorem \ref{trans-strip-char}, $T$ is a strip preserver if and only if
\[e^{i\mu w}\varphi(-zw)\in\PPP(H_1\times H) \text{~or~} e^{i\mu w}\varphi(zw)\in\PPP(H_1\times H)\]
 and  
\[e^{i\mu w}\varphi(-zw)\in\PPP(H_2\times H) \text{~or~}  e^{i\mu w}\varphi(zw)\in\PPP(H_2\times H).\]
In any case, it is necessary that for all $w=r\in\RR$, 
\[
e^{ir\mu}\varphi(-rz)\in\PPP(H_1)\cap\PPP(H_2), 
\]
and therefore $\varphi(z)$ has only real zeros.  Suppose that $e^{i\mu w}\varphi(-zw)\in\PPP(H_1\times H)$.  Then setting $w=ir$, $r>0$, implies that $e^{\mu w}\varphi(-irz)\in\PPP(H_1\times H)$, which restricts $\varphi(z)$ to have only real non-positive zeros, and furthermore to be a function which is a locally uniform limit of polynomials with only real non-positive zeros.  One may conclude that the order of $\varphi$ is at most one, and that $\{\gamma_k\}_{k=0}^\infty$ is a multiplier sequence \cite[Chapter VII]{Levin}. In the other case, when $e^{i\mu w}\varphi(zw)\in\PP(H_1\times H)$, $\varphi(z)$ must have only real non-negative zeros ($\varphi(-z)$ has only real non-positive zeros).  

Assume that $\varphi$ has only real non-positive zeros.  According to the observations above, $\varphi$ is locally uniform limit of polynomials with only real non-positive zeros, and it therefore possesses the Hadamard factorization \cite[Chapter VIII]{Levin} 
\[
\varphi(z) = ce^{az}z^m\prod_{k=1}^\nu \left(1+\frac{z}{z_k}\right), \qquad (0\le\nu\le\infty),
\]
where $a\ge 0$, $c\in\RR$, $m$ is a non-negative integer, $z_k>0$ for $k=1,2,3,\ldots$, and $\sum_{k=1}^\infty 1/z_k <\infty$.
Then, 
\[
e^{i\mu w}\overline{G}_T(z,w) = ce^{-aw(z-i\mu/a)}(-zw)^m\prod_{k=1}^\nu \left(1-\frac{zw}{z_k}\right),
\]   
is in $\PP(H_1\times H)$ if and only if $a\ge 1$.  From \cite[Lemma 2.2]{CCGL} of Craven and Csordas, it follows that for $c>0$, $0\le\gamma_0\le\gamma_1\le\gamma_2\le\cdots.$  In the case that $\varphi$ has only real non-negative zeros, the same holds for $\{(-1)^k\gamma_k\}_{k=0}^\infty$. 
\end{proof}

With the following theorem of E. Melamud, a characterization for linear operators preserving the set of polynomials whose zeros lie in an open strip is obtained. 

\begin{theorem}[\cite{M}]\label{open-set}
Let $T:\CC[z]\to\CC[z]$ be a linear operator, $T:\PP(\Omega)\to\PP(\Omega)\cup\{0\}$, and $\Omega\subset\CC$ be an open set. Then $T:\PP(\overline{\Omega})\to\PP(\overline{\Omega})$ if and only if $T[z^k]\in\PP(\overline{\Omega})$, where $k = \min\{ m :T[z^m]\not\equiv 0\}.$
\end{theorem}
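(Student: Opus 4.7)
The plan is to build both directions on the elementary identity
\[
T[(z-a)^k] \;=\; \sum_{j=0}^{k}\binom{k}{j}(-a)^{k-j}T[z^j] \;=\; T[z^k] \qquad (a \in \CC),
\]
in which every lower-order term vanishes by the minimality of $k$. For the necessity direction, assuming $T$ maps $\PP(\overline{\Omega})$ into $\PP(\overline{\Omega})\cup\{0\}$, I would fix any $a \in \CC \setminus \overline{\Omega}$ (available in the only non-trivial case $\overline{\Omega} \neq \CC$), observe that $(z-a)^k \in \PP(\overline{\Omega})$, and read off from the identity that $T[z^k] = T[(z-a)^k] \in \PP(\overline{\Omega})\cup\{0\}$; since $T[z^k] \not\equiv 0$ by definition of $k$, this places $T[z^k]$ in $\PP(\overline{\Omega})$.

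For the sufficiency direction, I would start from an arbitrary $p \in \PP(\overline{\Omega})\setminus\{0\}$. The inclusion $\PP(\overline{\Omega}) \subseteq \PP(\Omega)$ together with the hypothesis already gives $T[p] \in \PP(\Omega) \cup \{0\}$, so the only remaining task is to exclude zeros of $T[p]$ on the boundary $\partial\Omega$. Assuming for contradiction that $T[p] \not\equiv 0$ has a zero $\zeta_0 \in \partial\Omega$, the minimality of $k$ forces $\deg p \geq k$. I would then introduce the perturbation $p_\epsilon(z) := p(z) + \epsilon\, z^k$; since $\deg p \geq k$, this polynomial has the same degree as $p$ for small $\epsilon$, and by continuity of roots in the open set $\CC \setminus \overline{\Omega}$ it still belongs to $\PP(\overline{\Omega}) \subseteq \PP(\Omega)$. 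The hypothesis then yields
\[
T[p_\epsilon] \;=\; T[p] + \epsilon\, T[z^k] \;\in\; \PP(\Omega) \cup \{0\}
\]
for all sufficiently small $\epsilon \in \CC$.

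To finish, I would solve this equation pointwise near $\zeta_0$. Since $T[z^k] \in \PP(\overline{\Omega})$ one has $T[z^k](\zeta_0) \neq 0$, so the quotient $\epsilon(w) := -T[p](w)/T[z^k](w)$ is well defined on a neighborhood of $\zeta_0$ and satisfies $\epsilon(w) \to 0$ as $w \to \zeta_0$ because $T[p](\zeta_0) = 0$. Since $\zeta_0 \in \partial\Omega$ and $\Omega$ is open, I can choose such a $w$ inside $\Omega$; for that choice $\epsilon(w)$ is small enough that $T[p_{\epsilon(w)}] \in \PP(\Omega)\cup\{0\}$ holds, yet $T[p_{\epsilon(w)}](w) = 0$ with $w \in \Omega$ gives the required contradiction.

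The main technical subtlety will be the perturbation step: one has to genuinely have $p_\epsilon \in \PP(\overline{\Omega})$ for every sufficiently small $\epsilon$, not merely for some, which could fail if extra roots of $p_\epsilon$ drifted in from infinity (an issue when $\overline{\Omega}$ is unbounded and $\deg p < k$). This is precisely why the case $\deg p < k$ is isolated first---in that range $T[p] = 0$ automatically by minimality of $k$---leaving the easier regime $\deg p \geq k$ in which $\deg p_\epsilon = \deg p$ and the roots of $p_\epsilon$ move continuously from those of $p$ inside $\CC\setminus\overline{\Omega}$.
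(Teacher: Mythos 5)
The paper does not prove Theorem~\ref{open-set}; it is cited verbatim from Melamud~\cite{M} and used as a black box in Corollary~\ref{open-strip}, so there is no in-paper argument to compare against. Judged on its own, your proposal is essentially a correct proof. The necessity direction is exactly right: the telescoping identity $T[(z-a)^k]=T[z^k]$ forced by minimality of $k$, applied with $a\in\CC\setminus\overline\Omega$ so that $(z-a)^k\in\PP(\overline\Omega)$, immediately yields $T[z^k]\in\PP(\overline\Omega)$ once one notes $T[z^k]\not\equiv 0$. The sufficiency direction via the perturbation $p_\epsilon=p+\epsilon z^k$ and the pointwise choice $\epsilon(w)=-T[p](w)/T[z^k](w)$ near a putative boundary zero $\zeta_0$ is the right mechanism, and your remark about isolating the range $\deg p\ge k$ is precisely what keeps the roots of $p_\epsilon$ from wandering off to infinity when $\overline\Omega$ is unbounded.

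One loose end in the sufficiency argument should be closed explicitly. After choosing $w\in\Omega$ near $\zeta_0$, you conclude a contradiction from $T[p_{\epsilon(w)}](w)=0$ with $w\in\Omega$; but the hypothesis only gives $T[p_{\epsilon(w)}]\in\PP(\Omega)\cup\{0\}$, so the degenerate alternative $T[p_{\epsilon(w)}]\equiv 0$ must still be excluded. The argument does furnish it: since $T[p]\in\PP(\Omega)$ is non-vanishing on $\Omega$, one has $\epsilon(w)\neq 0$ automatically, and then $T[p]+\epsilon(w)T[z^k]\equiv 0$ would force $T[p]$ to be a nonzero scalar multiple of $T[z^k]\in\PP(\overline\Omega)$, contradicting $T[p](\zeta_0)=0$. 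This should be said rather than left implicit. You may also want to note that the printed conclusion ``$T:\PP(\overline\Omega)\to\PP(\overline\Omega)$'' is only coherent when read as $T:\PP(\overline\Omega)\to\PP(\overline\Omega)\cup\{0\}$ (as in the paper's definition of ``preserves'' and in Corollary~\ref{open-strip}), since $1\in\PP(\overline\Omega)$ but $T[1]=0$ whenever $k>0$; your proof tacitly and correctly works under this reading.
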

\noindent

\begin{corollary}\label{open-strip}
Let $T:\CC[z]\to\CC[z]$ be a linear operator. $T$ preserves the set of polynomials whose zeros lie only in the open strip $|\Im~z| <\mu$, $\PP(\{|\Im~z|\ge \mu\})$, if and only if it preserves $\SS_\mu$ and $T[z^k]\in\PP(\{|\Im~z|\ge \mu\})$, where $k = \min\{ m :T[z^m]\not\equiv 0\}.$  
\end{corollary}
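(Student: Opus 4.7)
The plan is to deduce the corollary as an essentially direct consequence of Melamud's Theorem~\ref{open-set}, applied with the open set $\Omega := \{z \in \CC : |\Im~z| > \mu\}$. Under this identification, $\PP(\Omega) = \SS_\mu$, and the closure $\overline{\Omega} = \{z : |\Im~z| \ge \mu\}$ gives $\PP(\overline{\Omega}) = \PP(\{|\Im~z|\ge \mu\})$, which is precisely the set of polynomials whose zeros lie in the open strip $|\Im~z| < \mu$. Thus both classes of polynomials appearing in the corollary are instances of the classes handled by Melamud.

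For the sufficiency direction, suppose $T$ preserves $\SS_\mu$ and $T[z^k]\in \PP(\{|\Im~z|\ge \mu\})$ for the minimal $k$ with $T[z^k]\not\equiv 0$. Preservation of $\SS_\mu$ is exactly the hypothesis $T:\PP(\Omega)\to\PP(\Omega)\cup\{0\}$ of Theorem~\ref{open-set}, and the assumption on $T[z^k]$ is exactly the condition $T[z^k]\in\PP(\overline{\Omega})$ appearing there. The conclusion $T:\PP(\overline{\Omega})\to\PP(\overline{\Omega})$ then translates directly to $T$ preserving $\PP(\{|\Im~z|\ge \mu\})$.

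For the necessity direction, assume $T$ preserves $\PP(\{|\Im~z|\ge \mu\})$. The condition on $T[z^k]$ is immediate: for $\mu>0$ the polynomial $z^k$ lies in $\PP(\{|\Im~z|\ge \mu\})$ (its only zero is $0$, which lies in the open strip), so $T[z^k]\in\PP(\{|\Im~z|\ge \mu\})\cup\{0\}$, and by minimality of $k$ the zero case is excluded. To see that $T$ also preserves the larger class $\SS_\mu$, I would run a Hurwitz approximation argument: given $p\in \SS_\mu$, factor $p(z)=c\prod_j(z-z_j)$ and define $p_n(z)=c\prod_j(z-z_j^{(n)})$, where $z_j^{(n)}$ is $z_j$ contracted slightly toward the real axis so that $|\Im~z_j^{(n)}| < \mu$. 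Then $p_n\to p$ uniformly on compact sets, each $p_n\in \PP(\{|\Im~z|\ge \mu\})$, so $T(p_n)\in \PP(\{|\Im~z|\ge \mu\})\cup\{0\}\subseteq \SS_\mu\cup\{0\}$; since $T(p_n)\to T(p)$ uniformly on compacta, Hurwitz's theorem forces $T(p)\in \SS_\mu\cup\{0\}$.

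No step looks like a serious obstacle; the main content is packaged inside Theorem~\ref{open-set}, and the only minor subtlety is the Hurwitz approximation that promotes preservation of the smaller (closed-strip-exterior) class to preservation of the larger (closed-strip) class. One should handle the degenerate case $T\equiv 0$ separately, for which the corollary holds vacuously, so that the minimal index $k$ is well defined.
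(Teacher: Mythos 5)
Your proposal is correct and takes essentially the same approach the paper intends: the corollary is read off from Melamud's Theorem~\ref{open-set} with $\Omega = \{z : |\Im~z| > \mu\}$, so that $\PP(\Omega)=\SS_\mu$ and $\PP(\overline{\Omega})=\PP(\{|\Im~z|\ge\mu\})$. The sufficiency direction is literally Theorem~\ref{open-set}; for necessity, the condition on $T[z^k]$ is immediate (for $\mu>0$, $z^k\in\PP(\overline\Omega)$ and by minimality its image is nonzero), and your Hurwitz approximation argument — contracting boundary zeros slightly toward the real axis to approximate an arbitrary element of $\SS_\mu$ by elements of $\PP(\overline\Omega)$ of the same degree, then passing to the limit componentwise in $\{|\Im~z|>\mu\}$ — is exactly the missing piece the paper leaves implicit. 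Flagging $T\equiv 0$ as a degenerate case where $k$ is undefined is a reasonable caveat.
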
\noindent
The latter requirement in Corollary \ref{open-strip} may be stated in different forms.  In particular, $z^k$ may be replaced with any polynomial $p\in\CC[z]$ with $\deg(p)=k.$  Characterizations of M\"obius images of the strip are provided in Appendix \ref{appendix1}.

\section{Strip preservers for real polynomials}
\label{real-case}

We now seek a characterization of operators which preserve the set of real polynomials whose zeros lie in a prescribed closed strip.  A form of the Hermite-Biehler Theorem is proved for polynomials whose zeros lie in a strip (Lemma \ref{real-strip-lemma}), yielding an equivalent condition for a linear operator to be a real strip preserver (Proposition \ref{real-strip-h-equiv}).  The section ends with the proofs of Theorems \ref{stab-strip} and \ref{integral-shrink}.  

Let a linear operator $T:\RR_n[z] \rightarrow \RR[z]$ be extended to $\CC_n[z]=\RR_n[z]\oplus i\RR_n[z]$ by $T(u+iv)=T(u)+iT(v)$ for $u,v\in\RR[z]$.  While conditions \eqref{H1-stability} and \eqref{H2-stability} of Theorem \ref{strip} are sufficient for $T:\RR_n[z] \rightarrow \RR[z]$ to  preserve $H_1 \cup H_2$-stability, they are not necessary, as the following example demonstrates.  

\begin{example}\label{Rz-example}
The operator $T_1=e^{-3D^2/8}$ maps $\SS_1(\RR)$ to $\SS_{1/2}(\RR)$ by Theorem \ref{gaussian-diff}.  Let $T_2$ be the scale transformation $z\mapsto\frac{1}{2}z$, which maps $\SS_{1/2}$ to $\SS_1$.  The composition $T=T_2\circ T_1$  then preserves $\SS_1(\RR)$, but not $\SS_1$ (consider $T(z-i) =z/2 - i$). 
\end{example}

\subsection{A class of entire functions}
Throughout the rest of the section, let $f^*(z):=\overline{f(\bar{z})}$, and define the following class of functions for real $\mu\ge 0$,  
$$
\HD = \{ f ~:~ |f(z)|>|f^*(z)|~ \text{for}~ \Im~z>\mu,~\text{and}~f~\text{is entire} \}.
$$
Note that for $f\in\HD$, $|f(z)|<|f^*(z)|,$ whenever $\Im~z<-\mu.$  A function in $\HD$ may be of infinite order, although in subsequent investigations it may be beneficial to consider the subclass of $\HD$ consisting only of those functions with finite order.  We require the lemma of de Bruijn which is used to prove Theorem \ref{debruijn-jensen}, and state it here in a modified form with the notation just defined.

\begin{lemma}[{\cite[Lemma 1]{D}}]\label{debruijn-lemma}
If $p(z)\in\SS_\delta(\RR)$, $\delta\ge0$, then for $\lambda>0$, 
\[
p(z+i\lambda)\in\HD,
\]
where $\mu=\sqrt{\max\{\delta^2-\lambda^2,0\}}$. 
\end{lemma}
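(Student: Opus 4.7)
The plan is to identify $q^{*}$ explicitly when $q(z)=p(z+i\lambda)$ and then reduce the inequality defining $\HD$ to a per-factor estimate that crucially requires pairing complex conjugate zeros of $p$. Since $p$ has real coefficients,
\[
q^{*}(z)=\overline{q(\bar z)}=\overline{p(\bar z+i\lambda)}=p(z-i\lambda),
\]
so $p(z+i\lambda)\in\HD$ amounts to $|p(z+i\lambda)|>|p(z-i\lambda)|$ for all $z$ with $\Im z>\mu$.

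I would factor $p$ over $\RR$ as
\[
p(z)=c\prod_j(z-r_j)\prod_k\bigl((z-a_k)^2+b_k^2\bigr),\qquad r_j,a_k\in\RR,\ 0<b_k\le\delta,
\]
and prove factor by factor that the squared modulus at $z+i\lambda$ strictly exceeds that at $z-i\lambda$ on $\Im z>\mu$; multiplying these inequalities yields the lemma, since every left-hand side is strictly positive in that region. Writing $z=x+iy$, the linear case is immediate:
\[
|z+i\lambda-r_j|^2-|z-i\lambda-r_j|^2=(y+\lambda)^2-(y-\lambda)^2=4\lambda y,
\]
which is strictly positive for every $y>0$.

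For a quadratic factor $q_k(z)=(z-a_k)^2+b_k^2$, I would rewrite it as $(z-a_k-ib_k)(z-a_k+ib_k)$ and set $u=x-a_k$ to get
\[
|q_k(z\pm i\lambda)|^2=\bigl[u^2+(y\pm\lambda-b_k)^2\bigr]\bigl[u^2+(y\pm\lambda+b_k)^2\bigr].
\]
Expanding and subtracting, the $u^4$, $b_k^4$, and symmetric cross terms cancel, leaving
\[
|q_k(z+i\lambda)|^2-|q_k(z-i\lambda)|^2=8\lambda y\bigl(u^2+y^2+\lambda^2-b_k^2\bigr).
\]
Since $b_k\le\delta$, the parenthesis is bounded below by $y^2+\lambda^2-\delta^2=y^2-\mu^2$ (using $\mu^2=\max\{\delta^2-\lambda^2,0\}$, with the case $\lambda\ge\delta$ immediate from $\lambda^2\ge b_k^2$), and is therefore strictly positive whenever $y>\mu$. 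Taking the product over all real and quadratic factors gives $|p(z+i\lambda)|>|p(z-i\lambda)|$ throughout $\Im z>\mu$, and as $p(z+i\lambda)$ is plainly entire, this is exactly $p(z+i\lambda)\in\HD$.

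The main obstacle is the quadratic-factor identity. Treating the complex zeros of $p$ one at a time would only yield the weaker threshold $y>b_k$, which when $\lambda<b_k$ strictly exceeds $\sqrt{b_k^2-\lambda^2}$; the sharp bound $y>\mu$ emerges only after grouping each conjugate pair into a single real quadratic and performing the cancellation above. Once that algebraic identity is in hand, the remainder of the argument is routine bookkeeping.
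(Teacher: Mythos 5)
Your proof is correct, and every step checks out. The identity
\[
|q_k(z+i\lambda)|^2-|q_k(z-i\lambda)|^2=8\lambda y\bigl(u^2+y^2+\lambda^2-b_k^2\bigr)
\]
is verified by writing each factor as $(u^2+P^2+b_k^2)^2-4P^2b_k^2$ with $P=y+\lambda$, and similarly with $M=y-\lambda$, so the difference factors as $(P^2-M^2)(2u^2+P^2+M^2-2b_k^2)=4\lambda y\cdot 2(u^2+y^2+\lambda^2-b_k^2)$. The case split $\lambda\ge\delta$ versus $\lambda<\delta$ correctly reconciles $\mu^2=\max\{\delta^2-\lambda^2,0\}$ with the bound, and the product argument is sound because each left-hand modulus is strictly positive in $\Im z>\mu$ (the zeros of $p(z+i\lambda)$ have imaginary part at most $\delta-\lambda<\mu$). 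Note that the paper cites this lemma to de Bruijn's 1950 paper and does not prove it inline; your argument is a self-contained reconstruction via the real linear and quadratic factorization of $p$, which is the classical route. Your remark at the end correctly identifies the crucial point: pairing conjugate zeros into a single real quadratic is what produces the sharp $y>\mu$ threshold, since treating $(z-a_k-ib_k)$ and $(z-a_k+ib_k)$ separately would only yield $y>b_k$. One tacit assumption worth making explicit: $p$ should be nonconstant (the paper too treats the constant case separately, e.g.\ in the proof of Lemma \ref{connected}), since a nonzero constant satisfies $|p(z+i\lambda)|=|p(z-i\lambda)|$ and so fails the strict inequality defining $\HD$.
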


\begin{lemma}\label{h-star-conditions}
Let $f=g+ih$ be an entire function,  where $g$ and $h$ are real entire functions.  The following are equivalent. 
\renewcommand\theenumi{\roman{enumi}}
\begin{enumerate}\itemsep 2pt
\item $f\in\HD.$ 

\item $ wf(z)+f^*(z) \neq 0,\text{ whenever }|w|\ge1 \text{~and~} \Im~z>\mu.$

\item $ g(z)+w h(z) \neq 0,\text{ whenever }\Im~w\ge0 \text{~and~} \Im~z>\mu.$

\end{enumerate}
Furthermore, if $f\in\HD$, then $f(z)$, $g(z)$, and $h(z)$ are non-zero for $\Im~z>\mu$.
\end{lemma}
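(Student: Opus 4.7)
The plan is to establish the cycle (i)$\Rightarrow$(ii)$\Rightarrow$(iii)$\Rightarrow$(i), then deduce the non-vanishing of $f,g,h$ as an easy corollary. The two recurring ingredients will be the algebraic identity
\[
g(z)+wh(z) \;=\; \frac{1}{2i}\bigl[(i+w)f(z)+(i-w)f^*(z)\bigr],
\]
which is immediate from $g=(f+f^*)/2$ and $h=(f-f^*)/(2i)$, together with the M\"obius transformation $u=-(i+w)/(i-w)$, whose inverse $w=-i(1+u)/(1-u)$ satisfies $\Im w=(|u|^2-1)/|1-u|^2$. Consequently it maps $\{\Im w\ge 0,\,w\neq i\}$ bijectively onto $\{|u|\ge 1,\,u\neq 1\}$.

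The step (i)$\Rightarrow$(ii) is just the reverse triangle inequality: $|wf(z)+f^*(z)|\ge|f(z)|-|f^*(z)|>0$ whenever $|w|\ge 1$ and $\Im z>\mu$. For (ii)$\Rightarrow$(iii) I will first show that $f$ is zero-free in $\{\Im z>\mu\}$: a common zero of $f$ and $f^*$ at $z_0$ contradicts (ii) directly (take $w=1$), while an isolated zero $f(z_0)=0$ with $f^*(z_0)\neq 0$ forces $|f^*(z)/f(z)|\ge 1$ for $z$ close to $z_0$, and then $w=-f^*(z)/f(z)$ produces a violation. Supposing (iii) failed at some $(z_0,w_0)$ with $\Im w_0\ge 0$, the identity would give $(i+w_0)f(z_0)+(i-w_0)f^*(z_0)=0$; the value $w_0=i$ forces $f(z_0)=0$, now excluded, so $w_0\ne i$ and $f^*(z_0)=u_0 f(z_0)$ for $u_0=-(i+w_0)/(i-w_0)$ with $|u_0|\ge 1$, whence $w':=-u_0$ immediately contradicts (ii).

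For (iii)$\Rightarrow$(i), setting $w=i$ gives $f\neq 0$ on $\{\Im z>\mu\}$, so $\phi:=f^*/f$ is holomorphic there. Assuming $|\phi(z_0)|\ge 1$ at some $z_0$, the case $\phi(z_0)\neq 1$ is dispatched by the M\"obius inverse: $w_0:=-i(1+\phi(z_0))/(1-\phi(z_0))$ lies in the closed upper half-plane and satisfies $g(z_0)+w_0 h(z_0)=0$, contradicting (iii). The delicate case $\phi(z_0)=1$ (equivalently $h(z_0)=0$) is handled by the open mapping theorem: $\phi$ is nonconstant (otherwise $h\equiv 0$, a degenerate situation treated separately) and hence attains values of modulus strictly greater than $1$ in every neighbourhood of $z_0$, reducing to the previous case. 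The final non-vanishing claim then follows directly: $|f|>|f^*|\ge 0$ forces $f\neq 0$; (iii) at $w=0$ gives $g\neq 0$; and $h(z_0)=0$ would force $f(z_0)=g(z_0)=f^*(z_0)$, contradicting the strict inequality in (i). The principal obstacles are the pair of limiting arguments---the continuity step eliminating zeros of $f$ in (ii)$\Rightarrow$(iii), and the open-mapping step for the boundary value $\phi(z_0)=1$ in (iii)$\Rightarrow$(i), where the M\"obius substitution sends $u=1$ to $w=\infty$ and so cannot be used directly.
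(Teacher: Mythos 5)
Your proposal is correct in its essentials and uses the same mechanism as the paper: the M\"obius correspondence between the closed upper half-plane $\{\Im~w\ge 0\}$ and the exterior of the unit disk $\{|u|\ge 1\}$. You are, if anything, more careful. Your identity $g+wh=\frac{1}{2i}\bigl[(i+w)f+(i-w)f^*\bigr]$ is exact, whereas the paper's displayed expression $w(f+f^*)+i(f-f^*)$ actually simplifies to $2(wg-h)$, which still needs an unstated further substitution $w\mapsto -1/w$ to become condition (iii). Your separate argument that (ii) forces $f$ to be zero-free, and your open-mapping treatment of the boundary value $\phi(z_0)=1$, also make explicit subtleties the paper glosses over with ``by virtue of the M\"obius transformation.''

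The one genuine loose end is the deferred ``degenerate situation $h\equiv 0$'' in (iii)$\Rightarrow$(i): you promise to treat it separately but never do, and in fact it cannot be rescued. Take $g\equiv 1$, $h\equiv 0$: then (iii) holds trivially (it merely asserts $1\neq 0$), yet $f=f^*=1$, so (i) and (ii) both fail. Hence (iii)$\Rightarrow$(i) is false as literally stated. This imprecision is inherited from the lemma itself; the paper's own proof rules out $h\equiv 0$ only by invoking that $\HD$ contains no complex multiples of real entire functions, an appeal which is unavailable precisely in the direction (iii)$\Rightarrow$(i). A clean repair is available and worth recording: (iii) already forces $h$ to be zero-free on $\{\Im~z>\mu\}$ unless $h\equiv 0$, since near an isolated zero of $h$ the meromorphic function $-g/h$ has a pole (note $g(z_0)\neq 0$ by (iii) at $w=0$), hence takes values in the closed upper half-plane arbitrarily close to $z_0$, contradicting (iii). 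So the implication (iii)$\Rightarrow$(i) holds under the single extra hypothesis $h\not\equiv 0$, and your argument then closes without any separate case to treat.
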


\begin{proof}
To show condition $(i)$ is equivalent to condition $(ii)$, note that $f\in\HD$ if and only if
$$ 
|f^*(z)|/|f(z)|<1,~\Im~z>\mu.
$$ 
This is in turn equivalent to 
$$ 
wf(z)+f^*(z) \neq 0,\text{ whenever }|w|\ge1,~\Im~z>\mu.
$$  
Condition $(ii)$ is equivalent to condition $(iii)$ by virtue of the M\"obius transformation $w\mapsto (w+i)/(w-i)$ which maps the upper half-plane to the exterior of the unit disk, yielding the condition 
$$ 
w(f+f^*)+i(f-f^*)\neq 0,\text{ whenever }\Im~w\ge0, \Im~z>\mu.
$$
Either setting $w=0$ in (iii), or multiplying by $1/w$ and applying Hurwitz's theorem as $|w|\to\infty$, shows that $f(z)$, $g(z)$, and $h(z)$ are non-zero for $\Im~z>\mu$. Note that $\HD$ does not contain complex constant multiples of real entire functions, including the identically zero function.  This prevents the cases $h\equiv 0$ in the application of Hurwitz's theorem above.
\end{proof}

\begin{remark}
The conditions in Lemma \ref{h-star-conditions} are similar to those for majorization in \cite[Ch. IX]{Levin}.  For $\mu=0$, the class of functions in $\HD$ is the class of de Branges functions \cite{deBranges59}, called the Hermite-Biehler class of functions in \cite{KalW}, and includes the P\'olya class, as defined by L.~de~Branges \cite[p. 35]{deBranges}.
\end{remark}
\noindent
Let $\HDD$ be the closure of $\HD$ under uniform limits on compact subsets of $\CC$.  The following is an extension of Lemma \ref{h-star-conditions} to functions in $\HDD$.

\begin{lemma}\label{h-star-bar-conditions}
Let $f=g+ih$ be an entire function,  where $g$ and $h$ are real entire functions.  The following are equivalent.
\renewcommand\theenumi{\roman{enumi}}
\begin{enumerate} \itemsep 2pt
\item $f\in\HDD.$
\item Either $f\equiv 0$, or for  $|w|>1$, $\Im~z>\mu$,
$$ wf(z) +f^*(z) \neq 0.$$  
\item Either $f\in\HD$, or $f=c \varphi$, where $c\in\CC$, and $\varphi$ is a real entire function such that $\varphi(z)\neq 0$ whenever $\Im~z >\mu$.
\item Either $f\equiv 0$, or for $\Im~w>0$, $\Im~z>\mu$,
$$g(z)+wh(z) \neq 0.$$
\end{enumerate}
Furthermore, $f(z)$, $g(z)$, and $h(z)$ are either non-zero for $\Im~z>\mu$ or identically zero.
\end{lemma}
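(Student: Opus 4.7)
The plan is to establish the cycle (i)$\Rightarrow$(ii)$\Rightarrow$(iii)$\Rightarrow$(i) and to derive (ii)$\Leftrightarrow$(iv) by exactly the M\"obius transformation $w \mapsto i(w-1)/(w+1)$ used in the proof of Lemma \ref{h-star-conditions}, noting that this map sends $|w|>1$ to $\Im w > 0$, so the strict forms of the inequalities transfer without modification. The new ingredient compared to Lemma \ref{h-star-conditions} is that the closure $\HDD$ accommodates a degenerate case $f = c\varphi$ with $\varphi$ real entire, which explains both the extra clause in (iii) and the permission of identically-vanishing $g$ or $h$ in the \emph{furthermore} conclusion.

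For (i)$\Rightarrow$(ii), take a sequence $f_n \in \HD$ with $f_n \to f$ locally uniformly, and assume $f \not\equiv 0$. The bivariate entire functions $F_n(z,w) := w f_n(z) + f_n^*(z)$ are nonvanishing on the connected open set $U := \{(z,w) \in \CC^2 : |w|>1,\ \Im z > \mu\}$ by Lemma \ref{h-star-conditions}(ii). Since $F_n \to F(z,w) := wf(z) + f^*(z)$ locally uniformly on $\CC^2$, Hurwitz's theorem forces $F$ to be either nowhere zero or identically zero on $U$; the latter would give $f \equiv 0$ on $\Im z > \mu$ (differentiate in $w$) and hence on $\CC$ by analytic continuation, a contradiction.

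For (ii)$\Rightarrow$(iii), assume $f \not\equiv 0$. The first step is to show $f$ is zero-free on $\Im z > \mu$. Indeed, a zero $z_0$ of $f$ there cannot have $f^*(z_0) = 0$ (else $wf(z_0)+f^*(z_0) = 0$ for every $|w|>1$) nor $f^*(z_0) \neq 0$ (else $-f^*/f$ would attain arbitrarily large modulus in a punctured neighborhood of $z_0$, producing values $w$ with $|w|>1$ at which $wf+f^*=0$). Hence $h_0 := f^*/f$ is holomorphic on $\Im z > \mu$ with $|h_0| \le 1$, and the maximum modulus principle yields the dichotomy: either $|h_0|<1$ pointwise (so $f \in \HD$), or $h_0 \equiv e^{2i\theta}$ is a unimodular constant. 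In the latter case $\varphi := e^{i\theta}f$ satisfies $\varphi^* = e^{-i\theta}f^* = e^{-i\theta}\cdot e^{2i\theta}f = \varphi$, so $\varphi$ is real entire, nonvanishing on $\Im z>\mu$, and $f = e^{-i\theta}\varphi$ is of the desired form.

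For (iii)$\Rightarrow$(i), if $f \in \HD$ there is nothing to prove; otherwise $f = c\varphi$. Set $f_n(z) := c\varphi(z) e^{-iz/n}$ when $c \neq 0$, and $f_n(z) := n^{-1} e^{-iz}$ when $c = 0$ (so that $f \equiv 0$). A direct computation gives $|f_n(z)|/|f_n^*(z)| = e^{2 \Im z / n} > 1$ for $\Im z > 0 \supseteq \{\Im z > \mu\}$ wherever $\varphi \neq 0$, so $f_n \in \HD$, and $f_n \to f$ locally uniformly. The \emph{furthermore} clause follows by inspection of the case $f = c\varphi$: writing $c = c_1+ic_2$ gives $g = c_1\varphi$ and $h = c_2\varphi$, each either identically zero or nonvanishing on $\Im z > \mu$. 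The main obstacle I anticipate is justifying the zero-free conclusion in step (ii)$\Rightarrow$(iii); once this is in hand, the maximum modulus principle delivers the dichotomy cleanly.
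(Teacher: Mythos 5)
Your proof is correct and uses essentially the same toolkit as the paper (Hurwitz's theorem, a M\"obius change of variables, a perturbative sequence converging locally uniformly, and the maximum modulus dichotomy), with the minor organizational difference that you route (ii)$\Rightarrow$(i) through (iii) using the explicit factor $e^{-iz/n}$, whereas the paper proves (ii)$\Rightarrow$(i) directly by multiplying $f$ by $(1+iz/n)$. Your choice of M\"obius map $w\mapsto i(w-1)/(w+1)$ is in fact slightly cleaner than the paper's $w\mapsto (w+i)/(w-i)$, since it transforms $wf+f^*$ directly into $2i(g+vh)$ rather than $2(vg-h)$.
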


\begin{proof}
We first show (i)$\Leftrightarrow$(ii). 
Assume (i) and let $\{f_n\}_{n=1}^\infty$ be a sequence of entire functions in $\HD$ which converges uniformly on compact subsets of $\CC$ to a function $f$.  Then by Lemma \ref{h-star-conditions}, 
\begin{equation}\label{fneq}
y_n(z,w) := wf_n(z) +f^*_n(z) \neq 0, \qquad |w|>1,~\Im~z>\mu,
\end{equation}
for each $n\in\NN$, and consequently by Hurwitz's Theorem, either the inequality in \eqref{fneq} holds in the limit $n\to\infty$ or we obtain the identically zero function, which establishes (ii). If (ii) holds and $f$ is not identically zero, then
$$ wf(z) +f^*(z) \neq 0, \qquad |w|>1,~\Im~z>\mu,$$
and $f(z)\neq 0$ for $\Im~z>\mu$ by Hurwitz's theorem.  It follows that $|f^*(z)|/|f(z)|\le 1$ for $\Im~z>\mu$.
Let $u_n(z) = (1+iz/n)f(z)$ define a sequence of entire functions $\{u_n\}_{n=1}^\infty$ which converge to $f$ uniformly on compact subsets of $\CC$. For each $n\in\NN$,
$$\frac{|u^*_n(z)|}{|u_n(z)|} = \left|\frac{1-iz/n}{1+iz/n}\right|\left|\frac{f^*(z)}{f(z)}\right| < 1, \qquad  \Im~z>\mu.$$ 
Therefore, $wu_n(z)+u_n^*(z) \neq 0$, for $|w|\ge1$, $\Im~z>\mu.$  By Lemma \ref{h-star-conditions}, $u_n\in\HD$ for each $n$, thus $f\in\HDD.$ 

Conditions (ii) and (iv) are seen to be equivalent by applying the same M\"obius transformation as in the proof of Lemma \ref{h-star-conditions}. Note that if (ii) and (iv)  are satisfied, then $f(z)$, $g(z)$, and $h(z)$ are either non-zero for $\Im~z>\mu$ or identically zero by Hurwitz's theorem.

It is easy to see that (iii)$\Rightarrow$(i)$\wedge$(ii). Moreover if (ii) is satisfied and $0 \not \equiv f \not \in \HD$, then $f^*(z)=-wf(z)$ for some $|w|=1$. It follows that $f=c \varphi$, where $c\in\CC$, and $\varphi$ is a real entire function. Hence, $\varphi$ is non-zero for $\Im~z>\mu$ as observed in the previous paragraph.  This establishes (ii)$\Rightarrow$(iii).
\end{proof}

 Let $\HDD(\RR)$ be the subclass of \emph{real} entire functions in $\HDD$, and note that by Proposition \ref{h-star-bar-conditions}, if $f(z)\in\HDD(\RR)$, then $f(z)$ is non-zero for $|\Im~z|>\mu$. For entire functions $f$ and $g$ we define the 
$\mu$-\emph{Wronskian} by 
\begin{align}\label{wren}
W_\mu[f,g](z):\!\!&=\frac {\sin(\mu D)f}{\mu}  \cdot \cos(\mu D)g  - \cos(\mu D)f \cdot \frac {\sin(\mu D)g}{\mu} \nonumber \\ 
&= \frac 1 {2\mu i} ( f(z+i\mu)g(z-i\mu)-f(z-i\mu)g(z+i\mu)).
\end{align}
Note that $W_\mu[f,g]$ reduces to the usual Wronskian $W_0[f,g]:=f'g-fg'$ when $\mu \rightarrow 0$. 
\begin{lemma}\label{wronskians}
Let $\mu \geq 0$ and $f=g+ih$, where $g$ and $h$ are real entire functions. 
\renewcommand\theenumi{\roman{enumi}}
\begin{enumerate}
\item $f \in \HD$ if and only if  $W_\lambda[h,g](x)<0$ for all $\lambda > \mu$ and $x \in \RR$. 
\item If $f \in \HDD$, then 
$W_\mu[h,g](x) \leq 0$ for all $x \in \RR$. \label{f-in-hdd-wron}
\item If $f^* \in \HDD$, then 
$W_\mu[h,g](x) \geq 0$ for all $x \in \RR$. 
\item If $f \in \HDD$, then $W_\mu[h,g] \equiv 0$ if and only if $f \in \HDD \setminus \HD$.
\end{enumerate}
\end{lemma}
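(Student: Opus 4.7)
The plan rests on a single computational identity: for real $x$ and $\lambda > 0$,
\begin{equation*}
W_\lambda[h,g](x) \;=\; -\frac{1}{4\lambda}\bigl(|f(x+i\lambda)|^2 - |f^*(x+i\lambda)|^2\bigr).
\end{equation*}
To verify it, substitute $g = (f+f^*)/2$ and $h = (f-f^*)/(2i)$ into the definition \eqref{wren}, expand the two products, and use $f^*(x-i\lambda) = \overline{f(x+i\lambda)}$ and $f(x-i\lambda) = \overline{f^*(x+i\lambda)}$, valid for $x\in\RR$. With this in hand, part (i) becomes the definition of $\HD$ rephrased: $f\in\HD$ means $|f(x+i\lambda)| > |f^*(x+i\lambda)|$ for every $\lambda > \mu$ and $x \in \RR$, which the identity converts to $W_\lambda[h,g](x) < 0$. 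For (ii), take $f_n \in \HD$ with $f_n \to f$ uniformly on compacta; writing $f_n = g_n + ih_n$ forces $g_n \to g$ and $h_n \to h$, hence $W_\lambda[h_n,g_n] \to W_\lambda[h,g]$ uniformly on compacta, and applying (i) and letting first $n\to\infty$ and then $\lambda \to \mu^+$ delivers $W_\mu[h,g](x)\le 0$. For (iii), since $f^* = g + i(-h)$, part (ii) applied to $f^*$ gives $W_\mu[-h,g]\le 0$, equivalently $W_\mu[h,g]\ge 0$.

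For (iv), the easy direction is immediate: by Lemma~\ref{h-star-bar-conditions}(iii), $f \in \HDD\setminus\HD$ means $f = c\varphi$ with $c\in\CC$ and $\varphi$ real entire, so $g = (\Re c)\varphi$ and $h = (\Im c)\varphi$ are proportional, and substitution into \eqref{wren} gives $W_\mu[h,g]\equiv 0$.

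The main obstacle is the converse direction of (iv), which I plan to prove by contradiction. (The case $\mu = 0$ reduces to the classical statement that $h'g - hg' \equiv 0$ forces $h/g$ to be a real constant; henceforth assume $\mu > 0$.) Suppose $f \in \HDD$, $W_\mu[h,g] \equiv 0$, and additionally $f \in \HD$. If $g \equiv 0$ then $f = ih$ and $f^* = -f$, so $|f|\equiv|f^*|$ contradicts $f\in\HD$; hence $g\not\equiv 0$. Rearranging \eqref{wren}, the vanishing of the Wronskian reads $h(z+i\mu)g(z-i\mu) = h(z-i\mu)g(z+i\mu)$, so the meromorphic function $M := h/g$ satisfies $M(z+i\mu) = M(z-i\mu)$ and is therefore $2i\mu$-periodic. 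By Lemma~\ref{h-star-conditions}, $f\in\HD$ makes $g$ and $h$ non-vanishing on $\Im z > \mu$, so $M$ is holomorphic there; periodicity then forbids any pole at all, since any pole would have $2i\mu\ZZ$-orbit intersecting the pole-free half-plane $\Im z > \mu$. Thus $M$ is entire. Lemma~\ref{h-star-conditions}(iii), divided through by $g$, further yields $\Im M(z) < 0$ on $\Im z > \mu$. But $g$ and $h$ being real entire gives $M^* = M$, so $M(x) \in \RR$ for $x\in\RR$; yet periodicity gives $M(x) = M(x+2i\mu)$, and the right-hand side has strictly negative imaginary part because $x+2i\mu$ lies in $\Im z > \mu$, contradicting $M(x)\in\RR$. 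Hence $f\notin\HD$, so $f\in\HDD\setminus\HD$, completing the proof.
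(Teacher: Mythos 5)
Your proposal is correct, and while parts (i)–(iii) follow essentially the paper's route (your identity $W_\lambda[h,g](x) = -\tfrac{1}{4\lambda}\bigl(|f(x+i\lambda)|^2 - |f^*(x+i\lambda)|^2\bigr)$ is just a rewriting of the paper's relation $-\Im(g/h)(x+i\lambda) = \lambda W_\lambda[h,g](x)/|h(x+i\lambda)|^2$, organized so that no case distinction on the vanishing of $h$ is needed), your treatment of the hard direction of (iv) is genuinely different. Both you and the paper deduce from $W_\mu[h,g]\equiv 0$ that the quotient of $h$ and $g$ extends to an entire $2i\mu$-periodic function, using Lemma~\ref{h-star-bar-conditions} to rule out poles in $\Im~z>\mu$. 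The paper then invokes a Picard-type fact: a non-constant entire $2i\mu$-periodic function must attain some value $-w$ with $w\in H$, and periodicity pushes that zero of $g+wh$ to arbitrarily high imaginary part, violating Lemma~\ref{h-star-bar-conditions}(iv). You instead sidestep Picard by noting that $M=h/g$ is a real entire function (so $M(\RR)\subseteq\RR$), that Lemma~\ref{h-star-conditions}(iii) forces $\Im~M<0$ on $\Im~z>\mu$, and that periodicity then equates the real number $M(x)$ with $M(x+2i\mu)$, which has strictly negative imaginary part—an immediate contradiction. Your route is more elementary and self-contained; the paper's is terser but leaves the value-distribution step implicit.
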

\begin{proof}
Let $\mu \geq 0$ and $f=g+ih$.  By Lemma \ref{h-star-conditions} (iii), $f \in \HD$ if and only if for all $\lambda > \mu$, 
$$
g(x+i\lambda) + wh(x+i\lambda)=0 \ \ \mbox{ implies } \ \ \Im~w <0.
$$
If $h(x+i\lambda)=0$, then $W_\lambda[h,g]=0$ and $f \notin \HD$. If $h(x+i\lambda) \neq 0$, 
then  
$$
\Im~w = -\Im~\frac {g(x+i\lambda)}{h(x+i\lambda)}  = \lambda \frac {W_\lambda[h,g](x)} {|h(x+i\lambda)|^2}, 
$$
which proves (i). 

Statements (ii) and (iii)  follow by continuity and the anti-symmetry of $W_\mu$. 

For (iv), assume $f \in \HDD$ and $W_\mu[h,g] \equiv 0$, where $\mu > 0$. If $h \equiv 0$, then $f \in \HDD \setminus \HD$ by Lemma \ref{h-star-bar-conditions}  (iii). If $h \not \equiv 0$, then by \eqref{wren},  
$g(z)/h(z)=g(z+2\mu i)/h(z+2\mu i)$ for all $z \in \CC$ such that $h(z) \neq 0$.   If $g(z)/h(z)$ is not a constant, then there is a $w\in H$ such that $g(z)/h(z) +w$ is periodic and somewhere $0$.  Hence the entire function $g(z)+ wh(z)= h(z)(g(z)/h(z)+w)$ fails to satisfy Lemma \ref{h-star-bar-conditions} part (iv), unless $g(z)/h(z)$ is a constant. Now  $g(z)/h(z)$ is a constant precisely  when $f \in \HDD \setminus \HD$. For $\mu=0$, the condition is easily seen to imply that $g$ and $h$ are constant multiples of each other. 
\end{proof}
\noindent
Note that by Lemma \ref{wronskians} \eqref{f-in-hdd-wron}, the class of functions associated to a fixed $h\in\HDD(\RR)$ by
\[
\{f~:~f+ih\in\HDD, \text{ $f$ is a real entire function}\}
\] 
forms a cone (it is closed under convex sums).

A real linear space of entire functions, all of which are non-zero on the same prescribed region, can have dimension at most two by Lemma \ref{spaces}.  Consequently, Lemma \ref{real-strip-lemma} is an analog to the Hermite-Biehler Theorem for polynomials in $\HDD$, and characterizes linear spaces of real polynomials whose zeros lie in a strip. 

\begin{lemma}\label{real-strip-lemma}
Let $f$ be an entire function. Then 
$f\in\HD \cup \HD^*$ if and only if $f=g+ih$, where $g$ and $h$ are real entire functions such that 
\begin{equation}\label{real-pair} \alpha g+\beta h\in\HDD(\RR)\setminus \{0\} \text{~for all~} (\alpha,\beta)\in\RR^2\setminus \{(0,0)\}.
\end{equation}
Moreover $f \in \HD$ ($f \in \HD^*$) if and only if $f$ satisfies \eqref{real-pair} and $W_\mu[h,g](x) < 0$ \textup{(}$W_\mu[h,g](x) >0$\textup{)} for some $x \in \RR$. 
\end{lemma}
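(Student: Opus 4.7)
The plan is to reduce both directions to Lemma \ref{h-star-conditions} (iii), which translates membership in $\HD$ into a one-parameter non-vanishing condition on $g(z)+wh(z)$ over $\{\Im z > \mu\}\times\overline{H}$, where $f=g+ih$ is the real/imaginary decomposition. The ``moreover'' clause is then extracted from Lemma \ref{wronskians}, which distinguishes $\HD$ and $\HD^*$ (as subsets of $\HDD$) by the sign of $W_\mu[h,g]$.

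For the forward direction, assume $f \in \HD$. Then $g$ and $h$ must be linearly independent over $\RR$: otherwise $f$ would be a complex scalar multiple of a real entire function, forcing $|f|=|f^*|$ and contradicting $f\in\HD$. Thus $\alpha g+\beta h \not\equiv 0$ for every real $(\alpha,\beta)\neq(0,0)$. To see each such combination lies in $\HDD(\RR)$, I would specialize Lemma \ref{h-star-conditions} (iii) with the real parameter $w=\beta/\alpha$ when $\alpha\neq 0$, and use the ``furthermore'' clause of Lemma \ref{h-star-conditions} when $\alpha=0$; in both cases $\alpha g+\beta h$ is non-vanishing on $\{\Im z>\mu\}$, so Lemma \ref{h-star-bar-conditions} (iii) places it in $\HDD(\RR)$. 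The case $f\in\HD^*$ reduces to the previous one applied to $f^*$, whose decomposition is $g+i(-h)$, since the set of real linear combinations of $\{g,h\}$ is unchanged under $h\mapsto -h$.

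For the converse, set $U:=\{\Im z>\mu\}$. The $(\alpha,\beta)=(0,1)$ case of \eqref{real-pair} makes $h$ zero-free on $U$, so $\psi(z):=-g(z)/h(z)$ is holomorphic on $U$. For every real $w$, \eqref{real-pair} with $(\alpha,\beta)=(1,w)$ gives $\psi(z)\neq w$ on $U$, hence $\psi(U)\subseteq\CC\setminus\RR$; connectedness of $U$ then forces $\psi(U)\subseteq H^*$ or $\psi(U)\subseteq H$. In the first case, $g+wh\neq 0$ whenever $\Im w\geq 0$ and $z\in U$, so $f\in\HD$ by Lemma \ref{h-star-conditions} (iii); in the second, the analogous condition $g+wh\neq 0$ for $\Im w\leq 0$ translates, under $w\mapsto -w$, into the Lemma \ref{h-star-conditions} (iii) criterion applied to $f^*=g+i(-h)$, giving $f^*\in\HD$. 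For the moreover clause, if $f\in\HD$ then Lemma \ref{wronskians} (ii) gives $W_\mu[h,g]\leq 0$ on $\RR$ and Lemma \ref{wronskians} (iv) excludes $W_\mu[h,g]\equiv 0$, so $W_\mu[h,g](x)<0$ at some $x$; conversely, under \eqref{real-pair} the first part of the lemma yields $f\in\HD\cup\HD^*$, and if $f^*\in\HD$ then Lemma \ref{wronskians} (iii) would force $W_\mu[h,g]\geq 0$ on $\RR$, contradicting the assumed existence of $x$ with $W_\mu[h,g](x)<0$. The $\HD^*$ statement is the mirror image.

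The main subtlety is the converse direction: one must check that $\psi$ is \emph{holomorphic} (not merely meromorphic) on the full strip $U$, which is precisely the point at which the $(\alpha,\beta)=(0,1)$ case of the hypothesis is indispensable. Once $\psi$ is holomorphic and avoids $\RR$, the connectedness dichotomy for $\psi(U)$ is routine, and the remaining bookkeeping amounts to matching the sign convention in Lemma \ref{h-star-conditions} (iii) to the two dichotomy cases.
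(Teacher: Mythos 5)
Your proof is correct and follows essentially the same route as the paper: both directions rest on Lemma \ref{h-star-conditions} (iii) together with its ``furthermore'' clause and Lemma \ref{h-star-bar-conditions} (iii), and the ``moreover'' clause is extracted from Lemma \ref{wronskians} exactly as the authors do. The only difference is presentational: for the converse you inline the connectedness argument via the quotient $\psi=-g/h$, whereas the paper invokes Lemma \ref{one}, whose proof is precisely that quotient argument.
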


\begin{proof}
If $f=g+ih\in\HD \cup \HD^*$, then \eqref{real-pair} follows from Lemma \ref{h-star-conditions} (iii) and the statement just below it. 

For the converse consider the entire function in two variables $g(z)+wh(z)$. It is non-vanishing whenever $\Im~z>\mu$ and $w \in \RR$. By Lemma \ref{one}, $g(z)+wh(z)$ is either non-vanishing on $\{z :  \Im~z>\mu\} \times \{w :  \Im~w \geq 0\}$, or on $\{z :  \Im~z>\mu\} \times \{w :  \Im~w \leq 0\}$. Hence either $f \in \HD$ or $f^* \in \HD$ by Lemma \ref{h-star-conditions}. 

If $f \in\HD \cup \HD^*$, then $W_\mu[h,g](x) \neq 0$ for some $x \in \RR$ by Lemma \ref{wronskians} (4). Hence, the last statement follows from Lemma \ref{wronskians}. 
\end{proof}

\begin{remark}
When $\mu =0$, Lemma \ref{real-strip-lemma} reduces to the ordinary Hermite-Biehler Theorem, with the interlacing condition replaced by an equivalent condition on the span of $g$ and $h$ \cite[pp. 12--13]{O}.
\end{remark}

Let $\PPD$ be the set of polynomials in $\HDD$ defined by
\[
\PPD=\HDD\cap\CC[z].
\]
Note that $\PPD$ is a subset of the polynomials that are stable on the half-plane $\Im~z>\mu$. 
By Lemma \ref{real-strip-lemma}, $\PPD$ consists of precisely those polynomials in $\PP(\{z\,:\,\Im~z>\mu\})$ which can be written as $g+ih$, where the real polynomials $g$ and $h$ form a Hermite-Biehler type pair (although $g,h$ may have non-real zeros).  By the classical Hermite-Biehler theorem  \cite[pp. 12--13]{O}, a stable polynomial will satisfy this condition, and thus the following strict inclusions hold:
\begin{equation}
\PP(H) \subset \PPD \subset \PP(\{z\,:\,\Im~z>\mu\}). \label{stab-h-dstab} 
\end{equation}
We now prove that the set of polynomials in $\PPD$ of degree less than $n\in\NN$, and its interior, are connected.  The proof of this relies on deforming polynomials in $\PPD$ into stable polynomials, and using the the connectedness of the set of stable polynomials of degree less than $n$.  Note that any stable polynomial of degree $d<n$ may be obtained as a limit of stable polynomials of degree $n$ by letting $n-d$ zeros tend to infinity in the lower half-plane.

\begin{lemma}\label{connected}
Both $\PPD \cap \CC_n[z]$ and its interior, $\Int(\PPD \cap \CC_n[z])$, are connected. Moreover, each polynomial in $\PPD \cap \CC_n[z]$ is the limit of polynomials in $\Int(\PPD \cap \CC_n[z])$. 
\end{lemma}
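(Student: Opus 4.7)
The plan is to connect any $p = g + ih \in \PPD \cap \CC_n[z]$ (with $g,h \in \RR_n[z]$) to a real polynomial in $\SS_\mu(\RR) \cap \RR_n[z]$ via the continuous path $\gamma_p(t) = g + ith$ for $t \in [0,1]$, and then move within the real set. To see $\gamma_p(t) \in \PPD$, I observe that for $t > 0$ the pair $(g, th)$ has the same real span as $(g,h)$, so by Lemma \ref{real-strip-lemma} every non-trivial real combination lies in $\HDD(\RR) = \SS_\mu(\RR)$, placing $\gamma_p(t) \in \HD \cup \HD^*$; since $W_\mu[th,g] = t\, W_\mu[h,g]$ and $p \in \HDD$ forces $W_\mu[h,g] \le 0$ by Lemma \ref{wronskians}(ii), the path stays in $\HDD$ rather than $\HDD^*$. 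The endpoint $\gamma_p(0) = g$ lies in $\HDD(\RR) = \SS_\mu(\RR)$ when $g \not\equiv 0$; the degenerate case $g \equiv 0$ (where $p = ih$ is a complex scalar multiple of a real polynomial) is handled by the alternative path $p_t = h(t + i(1-t))$, which reaches the real $h$ at $t=1$ while remaining a nonzero scalar multiple of $h$.

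Path-connectedness of $\SS_\mu(\RR) \cap \RR_n[z]$ itself follows by the standard zero-moving argument: factor any such polynomial as $c\prod_k(z-r_k)\prod_j\bigl((z-a_j)^2 + b_j^2\bigr)$ with $r_k, a_j \in \RR$ and $|b_j| \le \mu$, continuously slide these parameters in their admissible ranges, and reduce the degree by scaling a real linear factor $(1-sz)$ toward $1$ to send a real root to infinity. Combined with the deformation $\gamma_p$, this proves $\PPD \cap \CC_n[z]$ is path-connected.

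For the density assertion, I would approximate $p \in \PPD \cap \CC_n[z]$ by perturbations $p_\epsilon \in \Int(\PPD \cap \CC_n[z])$ using two ingredients: a regularization that replaces $p$ by a polynomial strictly in $\HD$ with uniform margin on $H_\mu$ (e.g., multiplication by a padding factor $(z + i(\mu+1))^{n-\deg p}$ whose zeros lie well below the strip and contribute strict inequality in $\HD$), combined with a small deformation of the original coefficients to lift any boundary degeneracies. For connectedness of $\Int(\PPD \cap \CC_n[z])$, the key observation is that when $p$ is interior the strict Wronskian condition $W_\mu[h,g](x_0) < 0$ of Lemma \ref{real-strip-lemma} is preserved under $h \mapsto th$ for $t > 0$, so the path $\gamma_p$ stays in $\Int$ for every $t \in (0,1]$; two interior points can then be joined by descending via $\gamma$ to slight perturbations of real polynomials, connecting within $\SS_\mu(\RR) \cap \RR_n[z]$, and thickening the connecting real path back into $\Int$ via the density construction. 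The main obstacle I anticipate is the density argument itself: one must ensure the strict inequality $|p_\epsilon(z)| > |p_\epsilon^*(z)|$ holds with uniform margin on $H_\mu$ even as $|z| \to \infty$ (where $|p|/|p^*| \to 1$ for polynomials of degree $n$), and that this uniformity can be arranged continuously in $p$ to enable the thickening step used in the connectedness of $\Int$.
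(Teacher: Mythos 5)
Your approach is genuinely different from the paper's: where you deform $p = g + ih$ along the coordinate path $\gamma_p(t) = g + ith$ down to its real part $g$ and then connect inside $\SS_\mu(\RR)\cap\RR_n[z]$, the paper instead applies the zero-shrinking operators $\cos(tD)$, $t \in [0,\mu]$, to push every element of $\PPD\cap\CC_n[z]$ into the set of stable polynomials $\PP_0\cap\CC_n[z]$, whose connectedness is elementary. For the first assertion (connectedness of $\PPD\cap\CC_n[z]$) your argument can be made to work; the Wronskian bookkeeping is sound and the endpoint $g$ lands in $\SS_\mu(\RR)$. One small repair: the dichotomy you should invoke is $p \in \HD$ versus $p \in \HDD\setminus\HD$ (equivalently $g,h$ linearly independent versus $p$ a complex scalar multiple of a real polynomial), not $g \equiv 0$ versus $g\not\equiv 0$; for $p \in \HDD\setminus\HD$ the hypothesis of Lemma~\ref{real-strip-lemma} fails, and the fact that $\gamma_p$ stays in $\PPD$ has to be argued directly (it does, since $\gamma_p(t)$ remains a scalar multiple of the same real polynomial, except when $g\equiv 0$, which is the truly degenerate subcase).

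The treatment of $\Int(\PPD\cap\CC_n[z])$ and of the density assertion, however, has genuine gaps that your approach does not circumvent and that the paper's choice of deformation is precisely designed to avoid. The key structural fact the paper exploits is that $\cos(tD)$ is a linear \emph{homeomorphism} of $\CC_n[z]$, hence an open map; combined with $\cos(tD)(\PPD) \subseteq \PP_{\delta(t)} \subseteq \PPD$, this gives $\cos(tD)(\Int\PPD) \subseteq \Int\PPD$ for free, and the connecting path never exits the interior. Your path $\gamma_p$, by contrast, converges to a real polynomial, which for $\mu>0$ lies in $\PPD\cap\PPD^*$ and is therefore a \emph{boundary} point of $\PPD$. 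The proposal to "descend via $\gamma$ to slight perturbations of real polynomials, connect within $\SS_\mu(\RR)$, and thicken back into $\Int$" requires producing, continuously in a parameter along a real path $q_s$, complex perturbations $q_s + i\epsilon r_s$ that remain uniformly inside $\Int\PPD$; you do not explain how to construct such $r_s$, and it is not trivial (even the natural candidate $r_s = q_s'$ requires a generalized Laguerre-type inequality and a separate uniformity check). For density you explicitly flag the obstruction — controlling $|p_\epsilon|/|p_\epsilon^*|$ near infinity — without resolving it; your padding factor $(z+i(\mu+1))^{n-\deg p}$ lands the product in $\HD$, but membership in $\HD$ is not the same as interiority in $\PPD\cap\CC_n[z]$. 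The paper resolves both points differently and more cheaply: it shows $\mathscr{D}_\lambda\cap\CC_n[z] \subseteq \Int(\PPD\cap\CC_n[z])$ for $\lambda<\mu$ by a compactness argument on $\{|w|\le 1\}$, and then applies $\cos(\epsilon D)$ (or the translation $z\mapsto z+i\epsilon$ for the degenerate complex multiples of real polynomials, via Lemma~\ref{debruijn-lemma}) to push any $p \in \PPD\cap\CC_n[z]$ into some $\mathscr{D}_\lambda$ with $\lambda<\mu$. In short, the paper's homeomorphism trick does double duty on the interior and on density, and your alternative deformation would need additional, currently missing, machinery to match it.
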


\begin{proof}
We claim that for each $0 \leq t \leq \mu$, 
\[
\cos(tD): \PPD \to \PP_{\delta(t)}, 
\]
where $\delta(t)=\sqrt{\mu^2-t^2}$. This will prove the first part of the lemma by the following arguments. Note that 
$\PP_0 \cap  \CC_n[z]$ is the set of stable polynomials of degree at most $n$, and $\Int(\PP_0 \cap  \CC_n[z])$ is the set of stable polynomials of degree at most $n$ with no real zeros, respectively. These sets are obviously connected. Hence, given the claim, this proves that $\PP_\mu \cap  \CC_n[z]$ is connected, since we may first continuously deform a polynomial $p \in \PPD$ to a stable polynomial using $\cos(tD)$, $0\leq t \leq \mu$, and we may connect any two stable polynomials. Since $\cos(tD): \CC_n[z] \rightarrow \CC_n[z]$ is invertible and continuous we have 
$\cos(tD): \Int(\PP_\mu \cap  \CC_n[z]) \rightarrow \Int(\PP_{\delta(t)} \cap \CC_n[z])$, which implies that we may connect any two polynomials in $\Int(\PPD \cap \CC_n[z])$ as just described.  

Note that the claim is true for real polynomials in $\PPD$ by Theorem \ref{debruijn-jensen}. For complex polynomials in 
$\PPD$ we argue as follows. By continuity it suffices to consider $p=q+ir \in \HD \cap  \CC_n[z]$. Then $\alpha q+\beta r \in \PP_\mu \cap \RR_n[z]$ for all $\alpha, \beta  \in \RR$, by Lemma \ref{real-strip-lemma}. Hence, 
$\alpha \cos(tD)q+\beta \cos(tD)r \in \PP_{\delta(t)} \cap \RR_n[z]$ for all $\alpha, \beta  \in \RR$, and thus $\cos(tD)p \in \PP_{\delta(t)}\cup \PP_{\delta(t)}^*$ for all $0\leq t \leq \mu$ by the same lemma.  If the claim is false, then 
$\cos(tD)p \in \PP_{\delta(t)}^*$ for some $0\leq t \leq \mu$. However, for small $t>0$,  
$W_{\delta(t)}[\cos(tD)q,  \cos(tD)r](x)<0$ for some $x \in \RR$ by continuity and Lemma \ref{real-strip-lemma}, and therefore 
$\cos(tD)p \in \PP_{\delta(t)}$ for sufficiently small $t>0$ by the same lemma. It follows by continuity that there is a $0<t \leq \mu$ such that $\cos(tD)p \in \PP_{\delta(t)}\cap \PP_{\delta(t)}^*$, and thus $\cos(tD)p$ is a complex constant multiple of a real polynomial by Lemma \ref{h-star-bar-conditions} (iii); that is, 
$$
\cos(tD) (\alpha q+\beta r) \equiv 0 \mbox{ for some } (\alpha, \beta) \in \RR^2\setminus \{(0,0)\}.
$$
Since $\cos(tD)$ is invertible this implies that $\alpha q+\beta r \equiv 0$, which is a contradiction.
This proves the claim.

If $\mu=0$, the second statement is clear, so let $\mu >0$. If $p \in \mathscr{D}_\lambda \cap \CC_n[z]$ for some $0 \leq \lambda < \mu$, then 
\begin{equation}\label{iint}
p(z)+wp^*(z) \neq 0, \mbox{ whenever } |w| \leq 1 \mbox{ and } \Im ~z \geq \mu
\end{equation}
by Lemma \ref{h-star-conditions} (ii). 
For each fixed $|w_0|\le 1$, the set of polynomials which satisfy $p(z)+w_0p^*(z)\neq 0$ is open, and since $\{w: |w|\le 1\}$ is compact, the set of polynomials satisfying \eqref{iint} is open. 
It follows that $\mathscr{D}_\lambda \cap \CC_n[z] \subseteq \Int(\PPD \cap \CC_n[z])$. Hence if, $p \in \mathscr{D}_\mu \cap \CC_n[z]$, then $\cos(\epsilon D)p \in \Int(\PP_\mu \cap \CC_n[z])$ for each $\epsilon >0$.  Otherwise, if $p \in \PP_\mu \cap \CC_n[z]$ is not in $\mathscr{D}_\mu \cap \CC_n[z]$, then $p$ is a constant multiple of a real polynomial and by Lemma \ref{debruijn-lemma}, $p(z+i\epsilon) \in \Int(\PP_\mu \cap \CC_n[z])$ for each $\epsilon >0$ by the same arguments (unless $p$ is a constant, which is clearly in $\overline{\Int(\PP_\mu \cap \CC_n[z])}$). 
\end{proof}

The polynomials of degree $1$ in $\HDD$ are simply those with a zero in the closed lower half-plane, and descriptions of the degree $2$ polynomials in $\HDD$ using the location of zeros are more complicated.
The structure of $\HDD$ appears non-trivial and open to investigation.

\subsection{Real strip preservers}

Recall that $H$ is defined to be the open upper half-plane.  We use the notation that for a set of polynomials $\SS$, 
\[
\SS^* = \{p^*:p\in \SS\}.
\]
An operator which preserves real zeroed polynomials must be either stability preserving or \emph{stability reversing} \cite{BB1}, meaning either $T(\PP(H))\subseteq \PP(H)\cup\{0\}$ or $T(\PP(H))\subseteq \PP^*(H)\cup\{0\}$.  A salient feature of the class $\PPD$ is that preserving the strip of width $\mu$ is equivalent to either preserving or reversing $\PPD$.  This yields the following ``characterization''
of real strip preservers.

\begin{proposition}\label{real-strip-h-equiv}
Let $T:\RR_n[z]\to\RR[z]$ be a linear operator. \\
Then $T:\SS_\mu\cap\RR_n[z]\to\SS_\mu(\RR)\cup\{0\}$ if and only if either
\renewcommand\theenumi{\roman{enumi}}
\begin{enumerate}\itemsep 2pt
\item $\dim_\RR(T)\le 2$ and $T$ has the form
$$T(p)=\alpha(p)Q + \beta(p)R,$$
where $\alpha,\beta:\RR_n[z]\to\RR$ are linear functionals, and $Q+iR\in\PPD$, or   
\item either $T(\PPD\cap \CC_n[z])\subseteq\PPD$ or $T(\PPD\cap\CC_n[z])\subseteq\PPD^*$.  
\end{enumerate}
\end{proposition}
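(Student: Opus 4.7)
The plan is to combine the Hermite--Biehler-type criterion of Lemma \ref{real-strip-lemma}, the Wronskian sign test of Lemma \ref{wronskians}, and the connectedness of $\PPD \cap \CC_n[z]$ from Lemma \ref{connected}.

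\emph{Sufficiency} is direct. In case (i), $T(p) = \alpha(p)Q + \beta(p)R$ is a real linear combination of $Q$ and $R$; since $Q + iR \in \PPD$, Lemma \ref{real-strip-lemma} (or Lemma \ref{h-star-bar-conditions} (iii) in the degenerate case that $Q$ and $R$ are real-proportional) forces every such combination into $\SS_\mu(\RR) \cup \{0\}$. In case (ii), the inclusion $\SS_\mu(\RR) \cap \RR_n[z] \subseteq \PPD \cap \CC_n[z]$ together with the identity $\PPD \cap \RR[z] = \PPD^* \cap \RR[z] = \SS_\mu(\RR)$ (from Lemma \ref{h-star-bar-conditions}) yields the conclusion immediately.

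For \emph{necessity}, extend $T$ to $\CC_n[z]$ by $T(q+ir) := T(q) + iT(r)$ for $q, r \in \RR_n[z]$. \emph{Step 1:} I would first show $T(\PPD \cap \CC_n[z]) \subseteq \PPD \cup \PPD^* \cup \{0\}$. For $p = q + ir \in \PPD$, either $p = c\varphi$ with $\varphi \in \SS_\mu(\RR)$ (so $T(p) = cT(\varphi) \in (\PPD \cap \PPD^*) \cup \{0\}$), or $p \in \HD$ strictly, in which case Lemma \ref{real-strip-lemma} gives $\alpha q + \beta r \in \SS_\mu(\RR)\setminus\{0\}$ for every nonzero $(\alpha, \beta) \in \RR^2$, so $\alpha T(q) + \beta T(r) \in \SS_\mu(\RR)\cup\{0\}$ by hypothesis, and a second application of Lemma \ref{real-strip-lemma} places $T(p)$ in $\HD \cup \HD^*$ when $T(q), T(r)$ are $\RR$-linearly independent, and in $(\PPD \cap \PPD^*) \cup \{0\}$ otherwise. \emph{Step 2:} If $\operatorname{rank}_\RR T \le 2$, write $T(p) = \alpha(p)Q + \beta(p)R$. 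Since the scale-invariant cone $\SS_\mu(\RR) \cap \RR_n[z]$ has nonempty interior in $\RR_n[z]$, the image $(\alpha,\beta)(\SS_\mu(\RR) \cap \RR_n[z])$ coincides with $(\alpha,\beta)(\RR_n[z])$, so $aQ + bR \in \SS_\mu(\RR) \cup \{0\}$ for all $(a,b)$ in that image (in particular for all $(a,b)\in\RR^2$ when the rank is $2$). Lemma \ref{real-strip-lemma} now forces $Q + iR \in \HD \cup \HD^*$ or $Q, R$ to be real-proportional; after possibly replacing $(R,\beta)$ by $(-R,-\beta)$, one has $Q + iR \in \PPD$, establishing (i).

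\emph{Step 3, the main case $\operatorname{rank}_\RR T \ge 3$,} yielding (ii). Introduce the continuous real-valued functional
\[
\Sigma(p) := \int_{-\infty}^{\infty} W_\mu[T(r), T(q)](x)\, e^{-x^2}\, dx, \qquad p = q + ir.
\]
By Lemma \ref{wronskians} and Step 1, on $\PPD \cap \CC_n[z]$ one has $\Sigma(p) < 0$ iff $T(p) \in \HD$, $\Sigma(p) > 0$ iff $T(p) \in \HD^*$, and $\Sigma(p) = 0$ iff $T(q), T(r)$ are $\RR$-linearly dependent. A preimage dimension count using $\dim \ker T = n+1 - \operatorname{rank}_\RR T$ shows that the locus $E_0 = \{p = q+ir : T(q), T(r) \text{ proportional}\}$ is a real-algebraic subset of $\CC_n[z] \cong \RR^{2(n+1)}$ of real codimension $\operatorname{rank}_\RR T - 1 \ge 2$. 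By Lemma \ref{connected}, $U := \Int(\PPD \cap \CC_n[z])$ is open, connected, and dense in $\PPD \cap \CC_n[z]$; removing the codimension-$\ge 2$ semi-algebraic set $E_0 \cap U$ from $U$ leaves it connected, so the continuous nonvanishing function $\Sigma$ has constant sign on $U \setminus E_0$. This forces $T(U \setminus E_0) \subseteq \PPD$ or $T(U\setminus E_0) \subseteq \PPD^*$; combining with $T(E_0 \cap U) \subseteq (\PPD \cap \PPD^*) \cup \{0\}$ from Step 1, the density of $U$, and the closedness of $\PPD$ and $\PPD^*$ extends the conclusion to all of $\PPD \cap \CC_n[z]$. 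The main obstacle will be Step 3: verifying the codimension bound on $E_0$ and the (standard but essential) fact that removing a codimension-$\ge 2$ real-algebraic subset from a connected open set preserves connectedness. The threshold ``rank $\ge 3$'' is forced precisely by this count, with the lower-rank case handled separately in Step 2.
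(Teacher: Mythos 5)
Your sufficiency argument and Step~1 follow the paper. The substantive divergence is in Step~3, where you quantify the $\PPD$/$\PPD^*$ dichotomy via the integrated Wronskian $\Sigma$ and then remove the exceptional locus $E_0$ from $\Int(\PPD\cap\CC_n[z])$, invoking the fact that deleting a codimension-$\ge 2$ real semi-algebraic set from an open connected subset of $\RR^N$ preserves connectedness. Your codimension count is correct (the determinantal locus $\{(u,v):u,v\text{ dependent}\}$ in $(\RR^k)^2$ has codimension $k-1$, and a linear surjection preserves codimension of preimages), but the approach imports exactly the transversality input you flag as the ``main obstacle.'' The paper sidesteps this entirely with a one-step perturbation: if $T(p_1)\in\PPD$ and $T(p_2)\in\PPD^*$ for $p_1,p_2\in\Int(\PPD\cap\CC_n[z])$, then connectedness alone (Lemma~\ref{connected}) produces $p=q+ir$ in the interior with $T(p)\in\PPD\cap\PPD^*$; after rescaling $p$ by a complex constant one may assume $T(q)\equiv 0$, and then for every $h\in\RR_n[z]$ one has $p+\epsilon h\in\PPD$ for small $\epsilon>0$, so $\epsilon T(h)+iT(r)\in\PPD$ forces $T(h)\in\SS_\mu(\RR)\cup\{0\}$; Lemma~\ref{spaces} then gives $\dim_\RR T(\RR_n[z])\le 2$, a contradiction. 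This replaces the global codimension argument with a purely local perturbation plus the linear-algebra Lemma~\ref{spaces}, and is both shorter and more self-contained.

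There is also a genuine gap in your Step~2. Scale-invariance together with nonempty interior of $\SS_\mu(\RR)\cap\RR_n[z]$ does \emph{not} imply $(\alpha,\beta)\bigl(\SS_\mu(\RR)\cap\RR_n[z]\bigr)=(\alpha,\beta)(\RR_n[z])$: a scale-invariant set with nonempty interior in $\RR^{n+1}$ can project onto a proper double sector of $\RR^2$ (e.g.\ $\{|y|<|x|\}\subset\RR^3$ under $(x,y,z)\mapsto(x,y)$), so the parenthetical ``in particular for all $(a,b)\in\RR^2$ when the rank is $2$'' is unjustified. The conclusion is still correct but needs a different argument; for instance, since $\dim_\RR T(\RR_n[z])=2$ the real-algebraic locus $\{(q,r)\in\RR_n[z]^2 : T(q),T(r)\text{ linearly dependent}\}$ is proper, while $\Int(\PPD\cap\CC_n[z])$ is a nonempty open subset of $\CC_n[z]$ by Lemma~\ref{connected}; hence some $p=q+ir\in\Int(\PPD\cap\CC_n[z])$ has $T(q),T(r)$ independent, so by Lemma~\ref{h-star-bar-conditions} one has $T(p)\notin\PPD\cap\PPD^*$, and Step~1 then puts $T(p)\in\HD\cup\HD^*$. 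Lemma~\ref{real-strip-lemma} now gives $aT(q)+bT(r)\in\SS_\mu(\RR)$ for all $(a,b)\neq(0,0)$, i.e.\ every nonzero element of $T(\RR_n[z])$ lies in $\SS_\mu(\RR)$, from which $Q+iR\in\HD\cup\HD^*$ follows by a further application of Lemma~\ref{real-strip-lemma}.
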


\begin{proof}Consider first the case where $\dim_\RR(T)> 2$.
By Lemma \ref{h-star-bar-conditions}, the set of real polynomials in $\PPD$ is $S_\mu(\RR)\cup\{0\}$. Thus, if $T$ is a real linear operator such that (ii) holds, then $T:\SS_\mu\cap\RR_n[z]\to\SS_\mu(\RR)\cup\{0\}$.  For the converse, suppose $T(\SS_\mu\cap\RR_n[z])\subseteq\SS_\mu(\RR)$.  Then for $p=q+ir\in\PP_\mu$ of degree at most $n$, $T(q)+wT(r)\in\PP_\mu$ for all real $w$, by Lemma \ref{real-strip-lemma}.  
By Lemma \ref{real-strip-lemma} again, $T(p) = T(q)+iT(r)$ is either in $\PPD$ or $\PPD^*$.  Hence, $T(\PPD \cap \CC_n[z])\subseteq\PPD\cup\PPD^*$, and we claim that $T(\PPD \cap \CC_n[z])\subseteq\PPD$ or $T(\PPD \cap \CC_n[z])\subseteq\PPD^*$.  It is sufficient to show that $T(\Int(\PPD \cap \CC_n[z]))\subseteq \PPD$ or $T(\Int(\PPD \cap \CC_n[z]))\subseteq \PPD^*$, by Lemma \ref{connected}.  
Suppose that there are $p_1, p_2\in\Int(\PPD \cap \CC_n[z])$ with $T(p_1)\in \PPD$ and $T(p_2)\in \PPD^*$.  Then by Lemma \ref{connected}, there is a polynomial  $p \in\Int(\PPD \cap \CC_n[z])$ such that $T(p)\in \PPD\cap \PPD^*$. By Lemma \ref{h-star-bar-conditions},  the space $\PPD\cap \PPD^*$ is precisely the set of  complex constant multiples of polynomials in $\SS_\mu(\RR)$. By multiplying $p$ by a suitable complex constant we may thus assume that $iT(p) \in \PPD \cap \RR[z]$, and thus $p=q+ir$ where $T(q) \equiv 0$. Now let $h \in \RR_n[z]$ be arbitrary. Then, since $p \in\Int(\PPD \cap \CC_n[z])$, there is an $\epsilon >0$ such that $p+\epsilon h \in \PPD$. Hence 
$\epsilon T(h)+iT(r) = T(p+\epsilon h) \in \PPD$. By Lemma \ref{h-star-bar-conditions} again this implies that $T(h)$ is either identically zero or in $\SS_\mu$. We have proved that each element of the linear space 
$T(\RR_n[z])$ is either identically zero or in $\SS_\mu$. By Lemma \ref{spaces}, $T$ can have rank at most $2$, which corresponds to the first case.  Thus if $\dim_\RR T >2$, $T[\Int(\PPD\cap\CC_n[z])]\subseteq\PPD$ or $T[\Int(\PPD\cap\CC_n[z])]\subseteq\PPD^*$, and case (ii) holds.

If $\dim_\RR(T)\le 2$, then $T$ must have the form in (i) by Lemma \ref{real-strip-lemma}.
\end{proof}

In the $\mu=0$ case, Proposition \ref{real-strip-h-equiv} provides the characterization of real stability preservers \cite{BB1} as stability preserving or reversing.  If $T$ preserves $S_\mu(\RR)$ for all degrees, then Proposition \ref{real-strip-h-equiv} implies that either $T(\PPD\cap \CC_n[z])\subseteq\PPD$ or $T(\PPD\cap\CC_n[z])\subseteq\PPD^*$ for all $n\in\NN$, yielding the following corollary. 

\begin{corollary}\label{soft-strip}
The linear operator $T:\RR[z]\to\RR[z]$ preserves $\SS_\mu(\RR)$ if and only if either
\renewcommand\theenumi{\roman{enumi}}
\begin{enumerate}\itemsep 2pt
\item $\dim_\RR(T)\le 2$ and $T$ has the form
$$T(p)=\alpha(p)Q + \beta(p)R,$$
where $\alpha,\beta:\RR[z]\to\RR$ are linear functionals, and $Q+iR\in\PPD$, or   
\item either $T(\PPD)\subseteq\PPD$ or $T(\PPD)\subseteq\PPD^*$.
\end{enumerate}
\end{corollary}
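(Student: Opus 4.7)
The plan is to deduce the corollary from Proposition \ref{real-strip-h-equiv} by applying it to each restriction $T|_{\RR_n[z]}$ and then taking the union over the filtration $\RR[z]=\bigcup_n\RR_n[z]$. Sufficiency is immediate: form (i) of the corollary restricts to form (i) of the proposition at each $n$, and $T(\PPD)\subseteq\PPD$ (respectively $\PPD^*$) restricts to $T(\PPD\cap\CC_n[z])\subseteq\PPD$ (respectively $\PPD^*$) at each $n$. The proposition then shows $T|_{\RR_n[z]}$ preserves $\SS_\mu\cap\RR_n[z]$ for every $n$, and the union over $n$ yields that $T$ preserves $\SS_\mu(\RR)$.

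For necessity, assume $T$ preserves $\SS_\mu(\RR)$ and apply the proposition to each $T|_{\RR_n[z]}$. Split on the global rank of $T$. If $\dim_\RR T\le 2$, pick $N$ large enough that $T(\RR_N[z])=T(\RR[z])=:V$. Then $\dim_\RR(T|_{\RR_N[z]})\le 2$, so the proposition at $n=N$ furnishes form (i): real polynomials $Q,R$ with $Q+iR\in\PPD$ whose span contains $V$, together with real linear functionals $\alpha_N,\beta_N:\RR_N[z]\to\RR$ such that $T|_{\RR_N[z]}(p)=\alpha_N(p)Q+\beta_N(p)R$. Since every $T(p)$ lies in $V\subseteq\mathrm{span}(Q,R)$, the functionals $\alpha_N,\beta_N$ extend to real linear $\alpha,\beta:\RR[z]\to\RR$ satisfying $T(p)=\alpha(p)Q+\beta(p)R$ on all of $\RR[z]$, which is form (i) of the corollary.

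If $\dim_\RR T>2$, fix $N$ with $\rank(T|_{\RR_N[z]})>2$. For every $n\ge N$ the rank bound rules out form (i) of the proposition, so case (ii) applies: either $(A_n)$ $T(\PPD\cap\CC_n[z])\subseteq\PPD$, or $(B_n)$ $T(\PPD\cap\CC_n[z])\subseteq\PPD^*$. The crucial step is to verify that the same option holds for all $n\ge N$. If instead $(A_{n_1})$ and $(B_{n_2})$ both occurred with $m:=\min(n_1,n_2)\ge N$, the nesting $\PPD\cap\CC_n[z]\subseteq\PPD\cap\CC_{n'}[z]$ for $n\le n'$ would force $T(\PPD\cap\CC_m[z])\subseteq\PPD\cap\PPD^*$. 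Rerunning the final argument in the proof of Proposition \ref{real-strip-h-equiv}---pick $p\in\Int(\PPD\cap\CC_m[z])$ with $T(p)\neq 0$ (nonempty by Lemma \ref{connected} together with the rank assumption), replace $p$ by a complex scalar multiple so that $T(p)=i\phi$ with $\phi\in\SS_\mu(\RR)$ and hence $T(\Re p)\equiv 0$, then for any $h\in\RR_m[z]$ and sufficiently small $\epsilon>0$ observe $T(p+\epsilon h)=\epsilon T(h)+iT(\Im p)\in\PPD$, so that Lemma \ref{h-star-bar-conditions}(iii) forces $T(h)\in\SS_\mu(\RR)\cup\{0\}$---we conclude $T(\RR_m[z])\subseteq\SS_\mu(\RR)\cup\{0\}$. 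Lemma \ref{spaces}(i) then bounds $\rank(T|_{\RR_m[z]})\le 2$, contradicting $m\ge N$. Once the option is uniform for $n\ge N$, the inclusion $\PPD\cap\CC_n[z]\subseteq\PPD\cap\CC_N[z]$ for $n<N$ propagates it, yielding $T(\PPD)\subseteq\PPD$ (or $\PPD^*$) globally.

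The main obstacle is the consistency step in the last paragraph: ruling out alternation between the $\PPD$ and $\PPD^*$ options across different indices $n\ge N$ cannot be obtained by merely quoting Proposition \ref{real-strip-h-equiv}, and instead requires re-deriving the interior-perturbation argument from its proof.
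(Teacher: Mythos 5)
Your proposal is correct, and it follows the same basic route the paper intends (deduce the corollary by applying Proposition \ref{real-strip-h-equiv} degree by degree), but you have correctly identified and filled a gap that the paper's one-sentence justification glosses over. The paper asserts that the proposition ``implies that either $T(\PPD\cap \CC_n[z])\subseteq\PPD$ or $T(\PPD\cap\CC_n[z])\subseteq\PPD^*$ for all $n\in\NN$,'' but the proposition by itself only gives, for each $n$, a disjunction whose branch could \emph{a priori} vary with $n$; to get a single global branch one must rule out alternation. Your nesting observation $\PPD\cap\CC_n[z]\subseteq\PPD\cap\CC_{n'}[z]$ for $n\le n'$ reduces alternation to the case $T(\PPD\cap\CC_m[z])\subseteq\PPD\cap\PPD^*$, and your re-derivation of the interior-perturbation step (pick $p\in\Int(\PPD\cap\CC_m[z])$ with $T(p)\neq 0$, normalize by a complex scalar so $T(p)=i\varphi$, perturb by $\epsilon h$, and invoke Lemma \ref{h-star-bar-conditions} and Lemma \ref{spaces}(i)) is exactly the mechanism the proposition's proof uses in its final paragraph, so it applies verbatim to force $\mathrm{rank}(T|_{\RR_m[z]})\le 2$, contradicting the choice of $N$. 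The rank-$\le 2$ case and the final downward propagation over $n<N$ are handled correctly. In short: same approach as the paper, with the implicit consistency argument made explicit.
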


From Corollary \ref{soft-strip}, one immediately obtains the following necessary condition for a real strip preserver.

\begin{proposition}\label{real-strip}
Let $T:\RR[z]\to\RR[z]$, $H_1=\{z\,:\,\Im~z > \mu\}$, and $H_2=H_1^*$.
If $T$ preserves $\SS_\mu(\RR)$, then either 
$$T(\PP(H))\subseteq\PP(H_1)\cup\{0\} ~\text{~and~}~ T(\PP(H^*))\subseteq\PP(H_2)\cup\{0\},$$
 or 
$$T(\PP(H))\subseteq\PP(H_2)\cup\{0\} ~\text{~and~}~ T(\PP(H^*))\subseteq\PP(H_1)\cup\{0\}.$$
\end{proposition}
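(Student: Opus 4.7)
The plan is to derive this proposition as an essentially immediate consequence of Corollary \ref{soft-strip}, by leveraging the chain of inclusions $\PP(H)\subseteq\PPD\subseteq\PP(H_1)$ from \eqref{stab-h-dstab} to convert preservation of $\PPD$ into preservation of half-plane stability, and by exploiting the fact that the real-linear extension of $T$ commutes with the $*$-involution to transfer such statements from $\PP(H)$ to $\PP(H^*)$.

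Concretely, I would first apply Corollary \ref{soft-strip}: setting aside the degenerate rank-at-most-two possibility, either $T(\PPD)\subseteq\PPD$ or $T(\PPD)\subseteq\PPD^*$. In the first subcase, for any $p\in\PP(H)$ the inclusion \eqref{stab-h-dstab} gives $p\in\PPD$, and hence $T(p)\in\PPD\subseteq\PP(H_1)\cup\{0\}$. To transfer this from $\PP(H)$ to $\PP(H^*)$, I would observe that the extension $T(u+iv):=T(u)+iT(v)$ satisfies $T(p)^*=T(p^*)$ directly from the definition; so for $p\in\PP(H^*)$ we have $p^*\in\PP(H)$, and $T(p)^*=T(p^*)\in\PP(H_1)\cup\{0\}$. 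Since $\PP(H_1)^*=\PP(H_1^*)=\PP(H_2)$, conjugating back yields $T(p)\in\PP(H_2)\cup\{0\}$, which combined with the previous inclusion is alternative A of the conclusion.

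The second subcase $T(\PPD)\subseteq\PPD^*$ is handled identically after conjugating \eqref{stab-h-dstab} to obtain $\PPD^*\subseteq\PP(H_2)$; it produces alternative B with the roles of $H_1$ and $H_2$ exchanged. In the remaining degenerate case from Corollary \ref{soft-strip}, where $T(p)=\alpha(p)Q+\beta(p)R$ with $Q+iR\in\PPD$ and $\dim_\RR T\le 2$, one verifies the conclusion directly from the structure of the image: for instance, when $T$ has rank one, the image lies on a line spanned by a polynomial in $\SS_\mu(\RR)\subseteq\PP(H_1)\cap\PP(H_2)$, so both alternatives hold trivially. The only genuinely delicate point, slight as it is, is to line up the conjugation identity $T(p)^*=T(p^*)$ with the star-closure of \eqref{stab-h-dstab}; the remainder of the argument is routine bookkeeping that unwinds the definitions.
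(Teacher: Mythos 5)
The overall architecture of your argument matches the paper's: both derive the conclusion from Corollary \ref{soft-strip}, using the inclusion $\PP(H)\subseteq\PPD\subseteq\PP(H_1)\cup\{0\}$ from \eqref{stab-h-dstab}, and both transfer to $\PP(H^*)$ by the conjugation identity $T(p^*)=T(p)^*$ (the paper compresses this to ``by symmetry''). The paper additionally verifies $T(h)\in\PP(H_1)\cup\PP(H_2)\cup\{0\}$ for each $h\in\PP(H)$ via Lemmas \ref{addz} and \ref{real-strip-lemma} before invoking Corollary \ref{soft-strip}, but this intermediate step is subsumed by the dichotomy, so the two proofs are not materially different.

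There is, however, a genuine gap, and it lies exactly where you wave your hands. Both your argument and the paper's silently assume alternative (ii) of Corollary \ref{soft-strip}, namely that $T(\PPD)\subseteq\PPD$ or $T(\PPD)\subseteq\PPD^*$. In the degenerate case (i), with rank $\le 2$, that dichotomy need not hold: Proposition \ref{real-strip-h-equiv} only establishes (ii) when $\dim_{\RR}T>2$. Your rank-one dispatch is fine, since then the image is a line through a real polynomial in $\SS_\mu(\RR)\subseteq\PP(H_1)\cap\PP(H_2)$. But the assertion that ``one verifies the conclusion directly'' for rank two is not correct, and in fact the conclusion can fail there. Take $\mu=1$, $Q=z^2+1$, $R=2z$ (so $Q+iR=(z+i)^2\in\PPD$), and $T\bigl(\textstyle\sum p_kz^k\bigr)=p_0Q+p_2R=p_0z^2+2p_2z+p_0$. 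For real $p_0\neq0$ the two roots of $T(p)$ multiply to $1$, so any non-real pair lies on the unit circle and hence in $|\Im z|\le1$; thus $T$ preserves $\SS_1(\RR)$ and has rank exactly two. Now $p_1=(z+i)(z+1+i)\in\PP(H)$ gives $T(p_1)=(-1+i)z^2+2z+(-1+i)$, which has a zero near $0.743+1.529i$ with imaginary part $>1$, so $T(p_1)\notin\PP(H_1)$; while $p_2=(z+i)(z-1+i)\in\PP(H)$ gives $T(p_2)=(-1-i)z^2+2z+(-1-i)$ with a zero near $0.743-1.529i$ of imaginary part $<-1$, so $T(p_2)\notin\PP(H_2)$. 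Thus neither of the two alternatives in the conclusion holds, and your dismissal of the rank-two case cannot be completed. The paper's proof has the same blind spot, since it never addresses the possibility that neither $T(\PPD)\subseteq\PPD$ nor $T(\PPD)\subseteq\PPD^*$ is available.
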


\begin{proof}
By symmetry, it is only necessary to consider $T(\PP(H))$.  Suppose $h=q+ir\in\PP(H)$.  By Lemma \ref{addz}, $q+\lambda r\in\PP(H)\cap\RR[z]$ for all $\lambda\in\RR$, and in particular $q+\lambda r\in\PP(H_1\cup H_2)$.  Then either  $T(q+\lambda r) \equiv 0$ or 
$$T(q+\lambda r)  = T(q)+\lambda T(r) \in \PP(H_1\cup H_2) \text{ for all } \lambda\in\RR.$$
In the latter case, $T(q)+ iT(r)\in\PP(H_1)\cup\PP(H_2)$ by Lemma \ref{real-strip-lemma}, and Corollary \ref{soft-strip} restricts the images of $\PP(H)$ and $\PP(H^*)$ to those stated, since $T(\PP(H))\not\subseteq\PP(H_2)\cup\{0\}$ when $T(\PPD)\subseteq\PPD$, and  $T(\PP(H))\not\subseteq\PP(H_1)\cup\{0\}$ when $T(\PPD)\subseteq\PPD^*$.  
\end{proof}

The condition in Proposition \ref{real-strip} is not sufficient. For example, any scale transformation, including a dilation which moves zeros outside the strip, will satisfy these conditions.  For the special case of a differential operator with constant coefficients, the characterization (Corollary \ref{real-strip-constant-coeffs}) is simple, and coincides with Corollary \ref{strip-diff-op}.  By the theory of real stability, we now obtain sufficient conditions to preserve $\PPD$.

\begin{theorem}\label{P-suff-alg}
Let $T : \CC_n[z] \rightarrow \CC[z]$ be a linear operator with algebraic symbol $G_T(z,\zeta)=A(z,\zeta)+iB(z,\zeta)$, where $A,B\in\RR[z,\zeta]$. If 
\begin{equation}\label{zzw}
A(z,\zeta)+wB(z,\zeta) \neq 0 \mbox{ whenever } \Im~z >\mu, \Im~\zeta >-\mu, \mbox{ and }  \Im~w >0,
\end{equation}
then $T : \PP_\mu \cap  \CC_n[z]  \rightarrow \PP_\mu$. 
\end{theorem}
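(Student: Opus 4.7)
The plan is to reformulate both hypothesis and conclusion as bivariate stability statements, then transfer the stability via polarization and the Grace--Walsh--Szeg\H{o} theorem. Writing $p = q + ir \in \PPD \cap \CC_n[z]$ with $q, r \in \RR_n[z]$, Lemma \ref{h-star-bar-conditions}(iv) tells us that $p \in \PPD$ (modulo $p \equiv 0$) is equivalent to the bivariate polynomial $P(\zeta, v) := q(\zeta) + v\,r(\zeta)$ being non-vanishing on $\{\Im \zeta > \mu\} \times H$, and similarly $T(p) \in \PPD$ is equivalent to $F(z, v) := T(q)(z) + v\, T(r)(z)$ being non-vanishing on $\{\Im z > \mu\} \times H$ (or being identically zero). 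Since $F = \hat T(P)$, where $\hat T$ extends $T$ to act only on the $\zeta$-variable, it suffices to show that $\hat T$ sends $\{\Im \zeta > \mu\} \times H$-stable polynomials in $\CC_n[\zeta, v]$ to $\{\Im z > \mu\} \times H$-stable polynomials.

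Next, I polarize the symbol. Writing $G_T = A + iB$, let $\tilde A, \tilde B \in \RR[z, \zeta_1, \ldots, \zeta_n]$ be the symmetric multi-affine polarizations of $A$ and $B$ in the $\zeta$-variable. The hypothesis states that $A + wB$ is non-vanishing on $\{\Im z > \mu\} \times \{\Im \zeta > -\mu\} \times H$. Fixing $z$ and $w$ in their respective half-planes and applying the Grace--Walsh--Szeg\H{o} theorem (with circular region $\{\Im \zeta > -\mu\}$) lifts this non-vanishing to $\tilde A + w \tilde B$ on $\{\Im z > \mu\} \times \{\Im \zeta_j > -\mu\}^n \times H$. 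Specializing $w = i$ yields that the polarized symbol $\tilde G_T := \tilde A + i \tilde B$ is non-vanishing whenever $\Im z > \mu$ and $\Im \zeta_j > -\mu$ for every $j = 1, \ldots, n$.

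I then combine these ingredients with the polarization identity
\[
T(f)(z) = c\,\tilde G_T(z, -\gamma_1, \ldots, -\gamma_n), \qquad f(\zeta) = c \prod_{j=1}^n (\zeta - \gamma_j) \in \CC_n[\zeta],\; \deg f = n,
\]
which follows from $\tilde G_T(z, \zeta_1, \ldots, \zeta_n) = T_z\bigl[\prod_{j=1}^n (z + \zeta_j)\bigr]$ by expanding into elementary symmetric polynomials. For each $v \in H$ at which $q + vr$ has degree exactly $n$ with all zeros $\gamma_j(v)$ satisfying $\Im \gamma_j(v) < \mu$ strictly, the non-vanishing of $\tilde G_T$ established above gives $F(z, v) = c(v)\, \tilde G_T(z, -\gamma_1(v), \ldots, -\gamma_n(v)) \ne 0$ for all $\Im z > \mu$. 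A bivariate Hurwitz argument in $(z, v)$ then extends this to the whole of $\{\Im z > \mu\} \times H$, or else forces $F \equiv 0$; either case gives $T(p) \in \PPD$.

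The main obstacle is treating degenerate values of $v$: those at which $\deg(q + vr) < n$ or some $\gamma_j(v)$ lies on the boundary line $\Im \zeta = \mu$. The degree drop is handled by approximating $q + vr$ by degree-$n$ polynomials whose extra zeros are pushed to $-i\infty$, which is compatible with the multi-affine structure of $\tilde G_T$. Boundary zeros are handled by combining the Hurwitz limit with Lemma \ref{connected}, which guarantees that every $p \in \PPD \cap \CC_n[z]$ is a locally uniform limit of elements of $\Int(\PPD \cap \CC_n[z])$, for which all zeros of the corresponding $q + vr$ lie strictly in $\{\Im \zeta < \mu\}$ for every $v \in H$, and so fall into the regular case of the previous paragraph.
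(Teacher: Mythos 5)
The first reduction in your argument is incorrect, and the error propagates through the rest of the proof. You claim that for $p=q+ir$ with $q,r\in\RR_n[z]$, the condition $T(p)\in\PPD$ is equivalent to $F(z,v):=T(q)(z)+vT(r)(z)$ being non-vanishing on $\{\Im z>\mu\}\times H$. But $T$ is an arbitrary $\CC$-linear operator, so $T(q)$ and $T(r)$ need not be real polynomials, and $T(p)=T(q)+iT(r)$ is \emph{not} a decomposition into real and imaginary parts. To apply Lemma~\ref{h-star-bar-conditions}(iv) you must write $T(p)=G+iH$ with $G,H$ real; if $T=R+iJ$ with $R,J:\RR_n[z]\to\RR[z]$, then $G=R(q)-J(r)$ and $H=J(q)+R(r)$, which is what the paper's auxiliary operator $\widetilde T$ produces, and this is genuinely different from $T(q)+vT(r)=(R(q)+vR(r))+i(J(q)+vJ(r))$. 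Because of this misidentification, the $w$-variable in hypothesis~\eqref{zzw} never enters your argument beyond the single specialization $w=i$, so you end up ``proving'' the conclusion under the strictly weaker hypothesis that $G_T=A+iB$ alone is non-vanishing on $\{\Im z>\mu\}\times\{\Im\zeta>-\mu\}$. That weaker statement is false.

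A concrete counterexample shows the gap cannot be repaired without changing the reformulation. Take $0<\Im c<\mu$ and $T(p)=(z-c)p$ on $\CC_n[z]$. Then $G_T(z,\zeta)=(z-c)(z+\zeta)^n$ is non-vanishing on $\{\Im z>\mu\}\times\{\Im\zeta>-\mu\}$, and for $p=1$ (so $q=1$, $r=0$) your $F(z,v)=T(1)=z-c$ is non-vanishing for $\Im z>\mu$. Your argument would then declare $T(1)\in\PPD$. But writing $z-c=(z-\Re c)+i(-\Im c)$ and applying Lemma~\ref{h-star-bar-conditions}(iv), one finds $(z-\Re c)+v(-\Im c)$ vanishes at $z=\Re c+v\,\Im c$ with $\Im z=\Im c\cdot\Im v$, which exceeds $\mu$ for large $\Im v>0$; hence $z-c\notin\PPD$. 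The full hypothesis~\eqref{zzw} fails for this $T$ (here $A+wB=(z+\zeta)^n(z-\Re c-w\,\Im c)$ vanishes for $\Im z,\Im w>0$), so the theorem itself is safe, but your proof never invokes the part of the hypothesis that excludes it. The paper's route is to push $\widetilde T$ through the \emph{real} stability-preservation criterion of \cite[Theorem~2.1]{BB2}, computing $G_{\widetilde T}(z,\zeta,w,v)=(w+v)\bigl(A+\tfrac{wv-1}{w+v}B\bigr)$ and observing $\Im\tfrac{wv-1}{w+v}>0$ for $w,v\in H$; that is exactly where the full range $w\in H$ in~\eqref{zzw} is consumed. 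Your polarization and Grace--Walsh--Szeg\H{o} ideas could in principle be adapted to $\widetilde T$, but you would then need to track the $w$-dependence through the polarization rather than discarding it at $w=i$.
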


\begin{proof}
Write $T=R+iJ$, where $R,J : \RR_n[z] \rightarrow \RR[z]$. Then 
$$
T(f+ig)= R(f)-J(g)+i(J(f)+R(g)).
$$ 
Define a linear operator $\widetilde{T}: \RR_{(n,1)}[z,w] \rightarrow \RR_{(n,1)}[z,w]$ by 
$$
\widetilde{T}(f+wg)= R(f)-J(g)+w(J(f)+R(g)).
$$ 
By way of Lemma \ref{h-star-bar-conditions}, the result follows if we prove that \eqref{zzw} implies that $\widetilde{T}$ preserves 
stability on $\{z : \Im~z >\mu\} \times \{w: \Im~w >0\}$. By \cite[Theorem 2.1]{BB2}, $\widetilde{T}$ preserves 
such stability if 
$
G_{\widetilde{T}}(z,\zeta, w, v)= \widetilde{T}[(z+\zeta)^n(w+v)]
$
is non-zero whenever $\Im~z >\mu, \Im~\zeta >-\mu, \Im~w >0$, and $\Im~v>0$. Since 
$$
G_{\widetilde{T}} = (w+v)\left( A(z,\zeta)+ \left( \frac 1 {1/w+1/v}-\frac 1 {w+v} \right)B(z,\zeta) \right)
$$
and 
$$
\Im~w>0 \mbox{ and } \Im~v>0 \mbox{ implies } \Im\left( \frac 1 {1/w+1/v}-\frac 1 {w+v} \right) >0, 
$$
the proof follows. 
\end{proof}

With a proof almost identical to that of Theorem \ref{P-suff-alg}, but using the transcendental characterization of stability preservers \cite[Theorem 2.3]{BB2} we get the following sufficient condition for preserving $\PP_\mu$. 

\begin{theorem}\label{P-suff-tran}
Let $T : \CC[z] \rightarrow \CC[z]$ be a linear operator with symbol $\overline{G}_T(z,\zeta)=A(z,\zeta)+iB(z,\zeta)$, where $A,B\in\RR[z][[\zeta]]$, and fix $\mu\ge0$. If 
\begin{equation}\label{zzwt}
e^{i\mu \zeta}(A(z+i\mu,\zeta)+wB(z+i\mu,\zeta))
\end{equation}
defines an entire function in $\PPP(H\times H \times H)$,  
then $T : \PP_\mu  \rightarrow \PP_\mu$. 
\end{theorem}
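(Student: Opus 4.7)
The plan is to mirror the proof of Theorem \ref{P-suff-alg}, substituting the algebraic multivariate stability‑preserver characterization with the transcendental one \cite[Theorem 2.3]{BB2}. First, decompose $T = R + iJ$ where $R, J : \RR[z] \to \RR[z]$ are $\RR$-linear, and introduce the auxiliary real linear operator $\widetilde{T}$ (acting on real formal power series in $z$ with an auxiliary real variable $w$) by
\[
\widetilde{T}(f + wg) = R(f) - J(g) + w\bigl(J(f) + R(g)\bigr),
\]
for $f, g \in \RR[[z]]$. This is the formal rewrite of the complex action of $T$ with $w$ replacing the imaginary unit $i$, so that $T(f + ig)$ and $\widetilde{T}(f + wg)$ correspond under $i \leftrightarrow w$.

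Second, by Lemma \ref{h-star-bar-conditions}(iv), membership of $p = f + ig$ in $\HDD$ is equivalent to non-vanishing of $f(z) + wg(z)$ on $\{\Im z > \mu\} \times \overline{H}$. Hence the statement $T : \PP_\mu \to \PP_\mu$ reduces, via the real structure and complex-conjugate symmetry of $\widetilde{T}$, to $\widetilde{T}$ preserving stability on $\{z : \Im z > \mu\} \times \{w : \Im w > 0\}$; the lower strip edge $\Im z < -\mu$ is handled automatically since $\widetilde{T}$ is a real operator and the hypothesis is conjugation-symmetric.

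Third, translate $z \to z + i\mu$ to recast this as preservation of stability on $H \times H$, and apply the transcendental characterization \cite[Theorem 2.3]{BB2} to reduce preservation to the membership of the corresponding symbol in $\overline{\mathcal{P}}(H \times H \times H \times H)$. A direct computation, parallel to the algebraic case, yields
\[
e^{i\mu\zeta}\,\overline{G}_{\widetilde{T}}(z + i\mu, \zeta, w, v) = (w + v)\left[A(z + i\mu, \zeta) + \frac{vw - 1}{v + w}\,B(z + i\mu, \zeta)\right]e^{i\mu\zeta},
\]
where the factor $e^{i\mu\zeta}$ arises from the translation in $z$ applied to the formal exponential $e^{-z\zeta}$. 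Since $(w + v)$ is non-vanishing on $H \times H$ and the Cayley-type expression $\tfrac{vw - 1}{v + w}$ has positive imaginary part on $H \times H$ (and surjects onto $H$ there), the stability-preservation of the shifted $\widetilde{T}$ is precisely the hypothesis \eqref{zzwt}.

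The principal technical obstacle will be extending $\widetilde{T}$ to act on the transcendental exponential $e^{-z\zeta - wv}$ and verifying that the Cayley-type identity above survives passage from polynomials to formal power series; this requires careful bookkeeping of the exponential factor $e^{i\mu\zeta}$ that accompanies the translation $z \to z + i\mu$, so that the transcendental symbol of the translated $\widetilde{T}$ matches the expression in \eqref{zzwt}. Once these identifications are established, the remainder of the argument transcribes the proof of Theorem \ref{P-suff-alg} verbatim.
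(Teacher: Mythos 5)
Your proposal is correct and follows essentially the same route as the paper: the paper simply remarks that Theorem~\ref{P-suff-tran} has ``a proof almost identical to that of Theorem~\ref{P-suff-alg}, but using the transcendental characterization of stability preservers [Theorem 2.3, BB2],'' and you have carried out exactly that adaptation --- the same decomposition $T=R+iJ$, the same auxiliary operator $\widetilde{T}$, the same reduction via Lemma~\ref{h-star-bar-conditions}(iv), the translation $z\mapsto z+i\mu$, and the same Cayley-type computation $\tfrac{1}{1/w+1/v}-\tfrac{1}{w+v}=\tfrac{vw-1}{w+v}$. (Two trivial blemishes that do not affect the argument: the condition in Lemma~\ref{h-star-bar-conditions}(iv) is non-vanishing for $\Im w>0$, not on $\overline{H}$; and the relevant mixed symbol is $\widetilde{T}[e^{-z\zeta}(w+v)]$, exponential in $z$ but algebraic in $w$, rather than $\widetilde{T}[e^{-z\zeta-wv}]$ --- also, only the forward implication, not surjectivity of $\tfrac{vw-1}{v+w}$, is needed.)
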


\begin{remark} 
The sufficient conditions to preserve $\PP_\mu$ in Theorems \ref{P-suff-alg} and \ref{P-suff-tran} are not necessary.  For example, the transcendental symbol for the operator $T$ in Example \ref{Rz-example} is $\overline{G}_T(z,\zeta)=e^{-3\zeta^2/8-z\zeta/2}=A(z,\zeta)$, and
\[
e^{i\mu \zeta}(A(z+i\mu,\zeta)+wB(z+i\mu,\zeta)) = e^{-3\zeta^2/8-z\zeta/2+i\mu\zeta/2} \not\in \PPP(H\times H \times H).
\] 
\end{remark}

\begin{theorem}\label{constant-coefficients}
Let $f(z)$ be a formal power series with complex coefficients. Then $f(D)$ preserves $\PP_\mu$ if and only if $f$ is an entire function in $\PPP(H^*)$.
\end{theorem}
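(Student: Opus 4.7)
The plan is to establish both directions using the symbol characterization from Theorem~\ref{P-suff-tran} combined with a Jensen polynomial argument.

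For sufficiency, I will verify the hypothesis of Theorem~\ref{P-suff-tran} for $T = f(D)$ under the assumption $f \in \PPP(H^*)$. The symbol is $\overline{G}_T(z,\zeta) = f(-\zeta)e^{-z\zeta}$, and decomposing $f = f_R + if_I$ into real entire parts gives $A(z,\zeta) = f_R(-\zeta)e^{-z\zeta}$ and $B(z,\zeta) = f_I(-\zeta)e^{-z\zeta}$. A direct calculation shows that $\mu$ cancels:
$$
e^{i\mu\zeta}\bigl(A(z+i\mu,\zeta) + wB(z+i\mu,\zeta)\bigr) = e^{-z\zeta}\bigl(f_R(-\zeta) + wf_I(-\zeta)\bigr).
$$
By the transcendental form of Lemma~\ref{addz} (Remark~\ref{trans-addz}), this real-coefficient expression lies in $\PPP(H\times H\times H)$ if and only if $\overline{G}_T = e^{-z\zeta}f(-\zeta) \in \PPP(H\times H)$. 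The change of variable $\zeta \mapsto -\zeta$ yields $f(-\zeta) \in \PPP(H)$ from $f \in \PPP(H^*)$, and combined with the standard fact $e^{-z\zeta} \in \PPP(H\times H)$ (via $(1-z\zeta/n)^n \to e^{-z\zeta}$), the product lies in $\PPP(H\times H)$ by multiplicative closure. Hence Theorem~\ref{P-suff-tran} applies.

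For necessity, I will apply $f(D)$ to the test polynomials $(z-i\mu)^n \in \PPD$. Since $\PPD \subseteq \PP(\{\Im z > \mu\})$, each image has zeros in $\{\Im z \le \mu\}$, so in the shifted coordinate $u := z - i\mu$ one has $q_n(u) := f(D)u^n \in \PP(H)$. Writing $f(t) = \sum_k a_k t^k/k!$ formally, a short computation gives $q_n(u) = \sum_{j=0}^n \binom{n}{j}a_{n-j}u^j$, whose reverse polynomial $v^n q_n(1/v) = \sum_{k=0}^n \binom{n}{k}a_k v^k$ is the $n$-th Jensen polynomial $J_n[f]$. The inversion $u \leftrightarrow 1/v$ converts $q_n \in \PP(H)$ into $J_n[f] \in \PP(H^*)$ for every $n$, after an initial factoring of $D^m$ to reduce to the case $a_0 \neq 0$ (noting that $D$ preserves $\PPD$ by Gauss--Lucas applied to both the zero-location and the Hermite--Biehler-type half-plane condition). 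A P\'olya-type theorem on Jensen polynomials---the analog of the classical characterization of $\LP$, with ``real zeros'' replaced by ``zeros in $\overline{H}$''---then yields that $f$ is entire, that $J_n[f](v/n) \to f(v)$ locally uniformly on $\CC$, and that $f \in \PPP(H^*)$ by Hurwitz's theorem.

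The hardest step is the last one in the necessity argument: concluding that $f$ is entire, rather than merely a formal power series, from $J_n[f] \in \PP(H^*)$ for all $n$. The coefficient bound $\binom{n}{k}/n^k \leq 1/k!$ gives only local boundedness of the scaled Jensen polynomials $Q_n(v) := J_n[f](v/n)$ within the radius of convergence of $f$; extending this to all of $\CC$ relies crucially on the half-plane zero constraint $Q_n \in \PP(H^*)$, via a normal-family argument in the spirit of P\'olya's original proof for the $\LP$ case.
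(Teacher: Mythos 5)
Your sufficiency argument is correct and matches the paper's: both verify the hypothesis of Theorem~\ref{P-suff-tran} by computing the symbol $e^{-z\zeta}f(-\zeta)$, applying Remark~\ref{trans-addz} to the decomposition $f=h+ig$, and using $e^{-z\zeta}\in\PPP(H\times H)$ together with $f(-\zeta)\in\PPP(H)$.

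The necessity argument, however, has a genuine gap at its first step: the claim that $(z-i\mu)^n\in\PPD$ is false for $\mu>0$. Recall $\PPD=\HDD\cap\CC[z]$, and membership in $\HD$ requires $|f(z)|>|f^*(z)|$ on $\{\Im z>\mu\}$. With $f(z)=(z-i\mu)^n$ one has $f^*(z)=(z+i\mu)^n$, and at $z=i\lambda$, $\lambda>\mu$, one gets $|f(i\lambda)|=(\lambda-\mu)^n<(\lambda+\mu)^n=|f^*(i\lambda)|$, so in fact $(z-i\mu)^n\in\HD^*\setminus\HDD$. You appear to have conflated $\PPD$ with the strictly larger set $\PP(\{z:\Im z>\mu\})$ from the inclusion chain \eqref{stab-h-dstab}; membership in the larger set says only that zeros lie in $\{\Im z\le\mu\}$, which is satisfied here, but $\PPD$ encodes a Hermite--Biehler-type constraint that $(z-i\mu)^n$ fails. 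Replacing your test polynomials with $(z+i\mu)^n$, which \emph{are} in $\PP(H)\subset\PPD$, does not rescue the argument either: the shifted variable $v=z+i\mu$ puts the zero constraint at $\{\Im v\le 2\mu\}$, and the inversion $v\mapsto 1/v$ carries that half-plane to the complement of a disk rather than to a half-plane, so the Jensen polynomials $J_n[f]$ would only be constrained to a disk-type region and the P\'olya-type limit theorem you invoke would not deliver $f\in\PPP(H^*)$. (There is also a secondary issue: factoring $D^m$ out of $f(D)$ is not a legitimate reduction, since $D^m g(D)$ preserving $\PPD$ does not imply $g(D)$ does.) The paper's necessity proof takes a different route: it considers, for each $\alpha\in\RR$, the real operator $T_\alpha=h(D)+\alpha g(D)$, shows $T_\alpha$ preserves $\SS_\mu(\RR)$, invokes Proposition~\ref{real-strip-h-equiv} to get $T_\alpha:\PPD\to\PPD$ or $T_\alpha:\PPD\to\PPD^*$, composes with the shift $e^{\pm i\mu D}$ to reduce to the $\mu=0$ stability-preserver characterization, and deduces $h+\alpha g\in\PPP(H)$ for every $\alpha$, from which $f\in\PPP(H^*)$ or $f^*\in\PPP(H^*)$; a final argument using $\PPD\cap\PPD^*$ eliminates the second alternative. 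This sidesteps the Jensen-polynomial route entirely.
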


\begin{proof}
Note that in the case $\mu =0$, the theorem reduces to the well-known result that $f(D)$ is a stability preserver if and only if $f\in\PPP(H^*)$  (see \cite{BB1}). 

Let $T=f(D)=h(D)+ig(D)$, where $h$ and $g$ are real power series, and note that $\overline{G}_T(z,\zeta)= e^{-z\zeta}f(-\zeta)$. Hence \eqref{zzwt} simplifies to 
$e^{-z\zeta}(h(-\zeta)+wg(-\zeta))$. 
If $f \in \PPP(H^*)$, then $f(-\zeta)=h(-\zeta)+ig(-\zeta) \in \PPP(H)$. By Remark \ref{trans-addz}, $h(-\zeta)+wg(-\zeta) \in \PPP(H\times H)$. Hence sufficiency follows from Theorem \ref{P-suff-tran} and that $e^{-z\zeta} \in \PPP(H\times H)$. 

Suppose $T$ preserves $\PPD$. If $p \in \SS_\mu(\RR)\subseteq \PPD$, then  
$h(D)p+ig(D)p \in \PP_\mu$. Fix $\alpha \in \RR$, and consider the operator $T_\alpha= h(D)+\alpha g(D)$. It follows from Lemma \ref{h-star-bar-conditions} that $T_\alpha : \SS_\mu(\RR) \rightarrow \SS_\mu(\RR)$. Thus by Proposition \ref{real-strip-h-equiv}, $T_\alpha : \PP_\mu \rightarrow \PP_\mu$ or $T_\alpha : \PP_\mu \rightarrow \PP_\mu^*$. If $T_\alpha : \PP_\mu \rightarrow \PP_\mu$, then $e^{i\mu D}T_\alpha : \PP_0 \rightarrow \PP_0$, since the translation $e^{i\mu D}$ ensures that condition (iv) in Lemma \ref{h-star-bar-conditions} is satisfied (for $\mu=0$ in Lemma \ref{h-star-bar-conditions}). It follows that  $e^{i\mu D}T_\alpha$ preserves stability, since $\PP_0=\PP(H)$, and therefore by the characterization of stability preservers \cite{BB1},
$e^{-i\mu z}(h(-z)+\alpha g(-z))$ is in $\PPP(H)$. 
Since $h+\alpha g$ is a real entire function we may then deduce that also $h+\alpha g \in \PPP(H)\cap\RR[[z]]$ (by \cite[Chapter VII, Theorem 7]{Levin}). If $T_\alpha : \PP_\mu \rightarrow \PP_\mu^*$, then $R\circ e^{-i\mu D} \circ T_\alpha : \PP_0 \rightarrow \PP_0$ where $R(p(z))=p(-z)$. By the characterization of stability preservers in \cite{BB1}, the symbol of $R\circ e^{-i\mu D} \circ T_\alpha$ is in $\PPP(H \times H)$. The symbol is $e^{z\zeta+i\mu \zeta}(h(-\zeta)+\alpha g(-\zeta))$, which is not in 
 $\PPP(H \times H)$ unless $h(-\zeta)+\alpha g(-\zeta) \equiv 0$. In any case we have proved that $h(z)+\alpha g(z) \in \PPP(H)$ for each $\alpha \in \RR$.  This implies by Remark \ref{trans-addz}, that either $f=h+ig \in \PPP(H^*)$ or $f^*=h-ig \in \PPP(H^*)$. 
 
To finish the proof we need to prove that $f \in \PPP(H^*)$. If $f^* \in \PPP(H^*)$, then  
$f^*(D) : \PPD \to \PPD$ (by the converse direction) and $f(D) : \PPD \to \PPD$ (by assumption). Since 
$T : \PPD \to \PPD$ if and only if $T^* : \PPD^*\to \PPD^*$, we have $f(D) : \PPD\cap \PPD^* \to \PPD\cap \PPD^*$. 
 By Lemma \ref{h-star-bar-conditions} (iii), 
\[ 
\PPD\cap \PPD^*= \{ch~:~h\in\SS_\mu(\RR), c\in\CC\}.  
\]
Since $f(D)$ maps complex multiples of real polynomials to polynomials of the same type, $f$ must be a complex multiple of a real entire function, and hence both $f^*\in\PPP(H^*)$ and $f\in\PPP(H^*)$.

\end{proof}

\begin{corollary}\label{real-strip-constant-coeffs}
Let $f(z)$ be a real formal power series. Then $f(D)$ preserves $\SS_\mu(\RR)$ if and only if $f\in\LP$.
\end{corollary}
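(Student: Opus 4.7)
The plan is to reduce Corollary~\ref{real-strip-constant-coeffs} to Theorem~\ref{constant-coefficients} by combining it with the real--strip dichotomy of Corollary~\ref{soft-strip}, i.e., by turning a question about $\SS_\mu(\RR)$-preservers into a question about $\PPD$-preservers.

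\emph{Sufficiency.} If $f\in\LP$, then $f$ is a real entire function and $f\in\PPP(H\cup H^*)\subseteq\PPP(H^*)$, so Theorem~\ref{constant-coefficients} gives $f(D):\PPD\to\PPD$. Since $f$ has real coefficients, $f(D)$ sends $\RR[z]$ into itself, and therefore maps $\SS_\mu(\RR)=\PPD\cap\RR[z]$ into itself.

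\emph{Necessity.} Assume $f(D)$ preserves $\SS_\mu(\RR)$. The relation $f(D)z^n=\sum_{k=0}^n a_k\frac{n!}{(n-k)!}z^{n-k}$ shows (by induction on $n$) that every Taylor coefficient $a_k$ of $f$ is real. Assume $f\not\equiv 0$; then $f(D)$ has infinite rank. Extending $f(D)$ $\CC$-linearly to $\CC[z]$, Corollary~\ref{soft-strip} leaves two possibilities: either (i) $f(D)(\PPD)\subseteq\PPD$, or (ii) $f(D)(\PPD)\subseteq\PPD^*$. In case (i), Theorem~\ref{constant-coefficients} yields that $f$ is entire and lies in $\PPP(H^*)$; since $f$ is real, \cite[Ch.~VII, Thm~7]{Levin} (invoked in the same way as in the proof of Theorem~\ref{constant-coefficients}) promotes this to $f\in\PPP(H^*)\cap\RR[[z]]=\LP$.

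For case (ii) I propose to mimic the symbol argument used in the proof of Theorem~\ref{constant-coefficients}. Restricting $f(D)$ to $\PP(H)\subseteq\PPD$ and post-composing with $R\circ e^{-i\mu D}$, where $R\,p(z):=p(-z)$, yields a stability preserver $\PP(H)\to\PP(H)$: indeed $e^{-i\mu D}$ shifts $\PPD^*$ into $\PP(H^*)$ (its zeros move from $\{\Im z\geq -\mu\}$ into $\overline{H}$), and then $R$ maps $\PP(H^*)$ into $\PP(H)$. By \cite[Theorem~6]{BB1}, its symbol
\[
R\circ e^{-i\mu D}\circ f(D)\bigl(e^{-z\zeta}\bigr)=f(-\zeta)\,e^{z\zeta+i\mu\zeta}
\]
must lie in $\PPP(H\times H)$. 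Exactly as in the proof of Theorem~\ref{constant-coefficients}, the factor $e^{z\zeta+i\mu\zeta}$ cannot be rescued to a function in $\PPP(H\times H)$ by any nonzero multiplier depending on $\zeta$ alone, so the relation forces $f(-\zeta)\equiv 0$, i.e.\ $f\equiv 0\in\LP$.

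The main obstacle is case (ii), but it is exactly the obstacle already overcome in the proof of Theorem~\ref{constant-coefficients}: the structural fact that no $\zeta$-only factor can compensate the failure of $e^{z\zeta}$ to be stable on $H\times H$. Once that is granted, the proof of the corollary is a short combination of Theorem~\ref{constant-coefficients} with Corollary~\ref{soft-strip}.
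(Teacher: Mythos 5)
Your proof is correct and follows essentially the same route as the paper: reduce via Corollary~\ref{soft-strip} to the $\PPD$-dichotomy, then invoke Theorem~\ref{constant-coefficients}. You fill in two details the paper leaves implicit---that the coefficients of $f$ must be real so that Corollary~\ref{soft-strip} applies, and that the reversing case $f(D)\PPD\subseteq\PPD^*$ forces $f\equiv 0$ via exactly the symbol computation already appearing inside the proof of Theorem~\ref{constant-coefficients}---but the structure is the same.
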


\begin{proof}
Fix $\mu\ge0$.  By Corollary \ref{soft-strip}, $f(D)$ preserves $\SS_\mu(\RR)$ if and only if either $f(D)\PPD\subseteq\PPD$ or $f(D)\PPD\subseteq\PPD^*$.  From Theorem \ref{constant-coefficients}, this can only happen if $f(z)\in\RR[[z]]\cap\PPP(H^*)$ or $f(z)\in\RR[[z]]\cap\PPP(H)$, or equivalently $f\in\LP$.
\end{proof}

We now prove Theorem \ref{stab-strip}, which extends de Bruijn's theorem. 

\begin{proof}[Proof of Theorem \ref{stab-strip}]
Assume $T=e^{i\lambda D}h(D)$ where $h(w)\in\PPP(H^*)$.  Now 
$$ (T+T^*)p(z) =  h(D)R(z) +h^*(D)R^*(z),$$
where $R(z)=e^{i\lambda D}p(z) = p(z+i\lambda)$.  Hence $R\in\PP_\delta$ with $\delta=\max(0,\sqrt{\mu^2-\lambda^2})$ by Lemma \ref{debruijn-lemma}. Since the operator $h(D)$ preserves the set $\PP_\delta$ by Theorem \ref{constant-coefficients}, the zeros of $(T+T^*)p(z)=h(D)R(z) +h^*(D)R^*(z)$ will lie in the strip of width $\delta$ by Lemma \ref{h-star-bar-conditions} (with $f=h(D)R(z)$, $f+f^*\in\SS_\delta(\RR)$).  
\end{proof}

Recall that $\PPP(\SS)$ is the closure of the set of polynomials $\PP(\SS)$ under uniform limits on compact sets in $\CC$.  An entire function $f\in\PPP(\{z\,:\, \Re~z>0\})$ is said to be \emph{Hurwitz stable}.  We will need the following theorem of Enestr\"om--Kakeya, see \cite[p. 255]{RS}. 

\begin{theorem}[Enestr\"om--Kakeya]\label{E-K}
If the coefficients of  a polynomial are non-negative and non-decreasing, then 
all its zeros lie in the closed unit disk. 
\end{theorem}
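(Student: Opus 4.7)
The plan is to use the classical multiply-by-$(1-z)$ trick, which telescopes the coefficient differences into a very tractable form. Write $p(z) = \sum_{k=0}^{n} a_k z^k$ with $0 \le a_0 \le a_1 \le \cdots \le a_n$, and I may assume $a_n > 0$ (otherwise pass to the reduced polynomial of lower degree).

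First I would compute the product
\[
(1-z)p(z) \;=\; a_0 + \sum_{k=1}^{n}(a_k - a_{k-1})\,z^k \;-\; a_n\,z^{n+1},
\]
whose virtue is that every coefficient on the right-hand side, except the leading one, is non-negative by hypothesis. Next, suppose for contradiction that $p$ has a zero $z_0$ with $|z_0| > 1$. Then $(1-z_0)p(z_0)=0$, so rearranging yields
\[
a_n\, z_0^{\,n+1} \;=\; a_0 + \sum_{k=1}^{n}(a_k-a_{k-1})\,z_0^{\,k}.
\]
Taking absolute values and using $|z_0|^k \le |z_0|^n$ for $0 \le k \le n$ (which holds because $|z_0|\ge 1$), I get
\[
a_n |z_0|^{n+1} \;\le\; a_0 |z_0|^n + \sum_{k=1}^{n}(a_k-a_{k-1})\,|z_0|^n \;=\; a_n |z_0|^n,
\]
where the last equality is the telescoping identity $a_0 + \sum_{k=1}^n(a_k-a_{k-1}) = a_n$. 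Dividing through by $a_n |z_0|^n > 0$ gives $|z_0|\le 1$, contradicting our assumption.

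There is really no obstacle here; the only delicate points are (i) handling the case where some top coefficients vanish by reducing to the genuine degree, and (ii) noting that the bound $|z_0|^k\le |z_0|^n$ relies on $|z_0|\ge 1$, which is exactly the regime being tested. The argument is entirely self-contained and does not use any of the preceding machinery of the paper.
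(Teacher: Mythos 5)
Your proof is correct, and there is nothing to compare it against: the paper does not prove the Enestr\"om--Kakeya theorem but merely cites it as a classical result from Rahman and Schmeisser~\cite{RS}. The multiply-by-$(1-z)$ argument you give is the standard textbook proof and is complete, with one tiny quibble in your edge-case remark: if $a_n=0$, then non-negativity and monotonicity force every $a_k=0$, so there is no ``reduced polynomial of lower degree'' to pass to --- the polynomial is identically zero and the statement is vacuous.
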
 

\begin{proof}[Proof of Theorem \ref{integral-shrink}]
Through scaling $z$, it is sufficient to prove the theorem when $\lambda=1$.  Let $y(z)=e^{i z/2}I(z)/2$, where
$$
I(z)=\int_0^1 e^{iz(t-1/2)}g(t) \;dt, 
$$
and thus $f(z)=y(z)+y^*(z)$, and $f(D)=e^{iD/2}I(D)/2 + e^{-iD/2}I^*(D)/2$.  
We will show $I(z)\in\PPP(H^*)$, whence the result follows by Theorem \ref{stab-strip}.  

Rotating $z\to -iz$, and shifting the integral by $t\mapsto t+1/2$, it is sufficient to prove the statement that 
$$
F(z)= \int_{-1/2}^{1/2} e^{zt} g(t+1/2) dt 
$$
is Hurwitz stable. Approximate the integral, $F(z)$, by $G_M(z)$, where
$$
(2M+1)G_M(z)= \sum_{k=-M}^M g\left(\frac{k}{2M}+\frac{1}{2}\right) \left(e^{z/2M}\right)^k = H_M\left(\frac{z}{2M}\right).
$$
Note that by the Enestr\"om--Kakeya theorem, $H_M(z)$ may be written as product of real factors of the form $R(z)=e^{-z/2}(e^z+r)$, where $r$ is a real number in the interval $[-1,1]$, and complex conjugate factors of the form 
$$
U(z)= e^{-z}(e^z+\zeta)(e^z+\bar{\zeta})= e^z+s^2e^{-z}+2t ,\quad \mbox{ where } |t|\leq s = e^{-\tau} \leq 1, 
$$
and $|\zeta|\le 1$.
Hence 
$$
2e^\tau U(z) = \cosh(z+\tau)+ t/s \qquad (\tau\le 0). 
$$
Note that if $U(z-\tau)$ is Hurwitz stable, then so is $U(z)$. Now $2e^\tau U(z-\tau)=\cosh(z)+t/s$ where $|t/s| \leq 1$. This is Hurwitz stable as can be seen by rotating the variables $z \mapsto iz$ (one obtains 
$\cos(z) + t/s$ which is in the Laguerre--P\'olya class \cite{DS}). Similarly,
$$R(z) = (1+r)\cosh\left(\frac{z}{2}\right)+(1-r)\sinh\left(\frac{z}{2}\right) = \sinh\left(\frac{z}{2}+\beta\right) \qquad (\beta\ge 0)$$
is Hurwitz stable ($\beta\ge0$ follows from $\sinh\beta=1+r\ge0$).  Since $H_M$ is Hurwitz stable for each $M=1,2,3,\ldots,$ by Hurwitz's Theorem $F(z)=\lim_{M\to\infty} G_M$ is Hurwitz stable.
\end{proof}
As mentioned earlier, from the integral representation of the Bessel function for $(\Re~\nu+1/2)>0$ \cite[p. 231]{BW},
\begin{equation}\label{bessel}
J_\nu(z) = \frac{2}{\sqrt{\pi}\;\Gamma\left(\nu + \frac{1}{2}\right)}\left(\frac{z}{2}\right)^\nu\int_0^1(1-t^2)^{\nu-1/2} \cos(zt)\; dt,
\end{equation}
$J_0(D)$ will decrease the width of the strip containing the zeros of a real polynomial by the amount given in Theorem \ref{integral-shrink}.  From Theorem \ref{integral-shrink}, it can not be determined if $J_1(D)$ and $J_2(D)$ will also narrow the strip containing the zeros of a real polynomial.
For the case $g(t)=1$ in Theorem \ref{integral-shrink}, a sharp result can be obtained with an identity, suggesting that a tighter bound on the narrowing of the strip width may be obtained for other cases as well.    

\begin{proposition}\label{sinc}
Let $g(z)= \sin(\lambda z )/z$, where $\lambda >0$. Suppose $p(x)\in\SS_\mu(\RR)$. Then 
$
g(D) p(z)\in\SS_\delta(\RR),
$ 
where $\delta=\sqrt{\mu^2 - \lambda^2/3}$.
\end{proposition}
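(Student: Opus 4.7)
The plan is to exploit the infinite product identity
\[
\frac{\sin(\lambda z)}{z} \;=\; \lambda \prod_{n=1}^{\infty} \cos\!\left(\frac{\lambda z}{2^n}\right),
\]
which follows by iterating the double-angle formula $\sin x = 2\sin(x/2)\cos(x/2)$ and using $\sin(\lambda z/2^N)/(\lambda z/2^N)\to 1$ as $N\to\infty$. Read as an operator identity, this gives
\[
g(D)p(z) \;=\; \lim_{N\to\infty} g_N(D)p(z), \qquad g_N(D) := \lambda\prod_{n=1}^{N} \cos\!\left(\frac{\lambda}{2^n}D\right),
\]
with the limit uniform on compact sets; since $p\in\RR_d[z]$ has fixed degree, this is really just a finite termwise computation.

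The main step is to apply Theorem \ref{debruijn-jensen} (the case $\xi=1/2$) iteratively. Each factor $\cos((\lambda/2^n)D)$ sends $\SS_\nu(\RR)$ into $\SS_{\sqrt{\max(\nu^2-\lambda^2/4^n,\,0)}}(\RR)$, so after $N$ successive applications the squared strip width decreases by $\lambda^2\sum_{n=1}^{N} 4^{-n}$. Consequently $g_N(D)p \in \SS_{\delta_N}(\RR)$ with
\[
\delta_N \;=\; \sqrt{\max\!\Bigl(\mu^2 - \tfrac{\lambda^2}{3}\bigl(1-4^{-N}\bigr),\; 0\Bigr)} \;\searrow\; \delta \;=\; \sqrt{\max(\mu^2 - \lambda^2/3,\,0)},
\]
using $\sum_{n=1}^{\infty} 4^{-n} = 1/3$. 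A standard Hurwitz argument then concludes: if $|\Im z_0|>\delta$, choose $N$ so large that $|\Im z_0|>\delta_N$; then $g_N(D)p$ is zero-free in a neighborhood of $z_0$, and since the limit $g(D)p$ has the same degree as $p$ (its leading coefficient is $g(0)=\lambda$ times that of $p$, hence non-zero), $g(D)p$ must also be zero-free there.

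The heart of the argument is really just the choice of product expansion. The dyadic factorization produces precisely the constant $\lambda^2/3$, matching the leading-order heuristic $g(D)\approx \lambda\bigl(1-\lambda^2 D^2/6 + O(\lambda^4)\bigr) \approx \lambda\exp(-\lambda^2 D^2/6)$ together with Theorem \ref{gaussian-diff}; a coarser decomposition such as the one supplied by Theorem \ref{integral-shrink} applied with $g\equiv 1$ would only yield the weaker constant $\lambda^2/4$. There is no serious technical obstacle beyond recognizing that \eqref{eq:identity} is the right expansion to use; the convergence issues are handled routinely by Hurwitz's theorem because $p$ is a polynomial of fixed degree whose leading coefficient is preserved by $g_N(D)$ uniformly in $N$.
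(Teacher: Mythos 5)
Your proof is correct and takes essentially the same route as the paper: the paper also uses the Vi\`ete-type product $\sin(z)/z=\prod_{n\ge1}\cos(z/2^n)$ and iterates Theorem~\ref{debruijn-jensen}, summing $\lambda^2\sum_{n\ge1}4^{-n}=\lambda^2/3$. Your version supplies the convergence/Hurwitz details that the paper leaves implicit, but the key idea (the dyadic cosine factorization) is identical.
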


\begin{proof}
Recall the identity 
$$
\frac {\sin(z)} z = \cos(z/2)\cos(z/4)\cos(z/8)\cdots .
$$
Computing the shrinking from $\cos(\lambda D)$ using Theorem \ref{debruijn-jensen} yields that the width of the strip squared decreases as 
$$
\mu^2 \mapsto \mu^2 - \lambda^2 /4 - \lambda^2 /16 -\lambda^2 /64-\cdots =  \mu^2 - \lambda^2 /3. 
$$
\end{proof}

\section{Extensions to Bargmann-Fock spaces}
\label{section:b-f}

In the case that an operator maps polynomials to arbitrary entire functions, then an extension of the characterization of strip preservers can be applied, and the operator can be extended to act on transcendental entire functions, as we obtain below in Theorem \ref{entire-strip}.  Extensions of Theorems \ref{stab-strip} and \ref{integral-shrink} are also stated (Theorems \ref{stab-strip-entire} and \ref{integral-shrink-entire}).
We will work with the class $\SSS_\mu$ (or its image under a linear transformation) which has a characterization in terms of the Hadamard factorization similar to that for $\LP$.

\begin{proposition}[\cite{korevaar}]
A function $\varphi$ is in $\SSS_\mu$ if and only if $\varphi$ has the form 
\begin{equation}\label{stripClosure}
\varphi(z) = c e^{-\gamma z^2 +az}z^m\prod_{k=1}^\nu \left(1-\frac{z}{z_k}\right)e^{z/z_k}, \qquad (0\le\nu\le\infty),
\end{equation}
for $\gamma\ge0$, where $c\in\CC$, $z_k\in\CC\setminus(H_1\cup H_2)$, and
\[
-2\mu\gamma\le \Im~a + \sum_{k=1}^\nu \Im~z_k^{-1} \le 2\mu\gamma.
\]
\end{proposition}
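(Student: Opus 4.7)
The plan is to mirror the classical proof of the Laguerre--P\'olya theorem $\LP=\PPP(\RR)$, which is recovered as the $\mu=0$ case (where the inequality collapses to $\Im a+\sum\Im z_k^{-1}=0$, forcing real exponents and only real zeros). The argument splits into a necessity direction and a sufficiency direction, with Hadamard factorization driving the former and an explicit approximation driving the latter.

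For necessity, suppose $\varphi=\lim p_n$ locally uniformly with each $p_n\in\SS_\mu$. Hurwitz's theorem places the zero-set of $\varphi$ inside the closed strip. Standard Jensen-type growth estimates for entire functions whose zeros lie in a strip, combined with local uniform boundedness of the approximants, show that $\varphi$ has order at most $2$ and $\sum 1/|z_k|^2<\infty$. Hadamard's theorem then gives $\varphi(z)=cz^me^{Q(z)}\prod(1-z/z_k)e^{z/z_k}$ with $Q$ a polynomial of degree at most $2$, which I write (absorbing the constant term into $c$) as $Q(z)=-\gamma z^2+az$. A direct computation from this factorization yields the asymptotic
\[
\ln|\varphi(iy)|=\gamma y^2 - y\bigl(\Im a + \textstyle\sum\Im z_k^{-1}\bigr) + O(\ln|y|) \qquad (|y|\to\infty),
\]
while the corresponding expansion for any polynomial with zeros $w_k$ in the strip has no quadratic-in-$y$ contribution and a linear-in-$y$ correction of size $\sum \Im w_k/y = O(1)$. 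The convergence $p_n\to\varphi$ together with the constraint $|\Im w_k|\le\mu$ then forces $\gamma\ge 0$ (otherwise the limit would decay on vertical lines at a rate polynomials cannot match) and the two-sided bound on $\Im a+\sum\Im z_k^{-1}$.

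For sufficiency, I construct an explicit approximating sequence. Fix $N$, truncate the Weierstrass product at index $N$, and absorb the exponentials $e^{z/z_k}$ ($k\le N$) into the linear part, producing a leading factor $e^{-\gamma z^2+b_Nz}$ with $b_N=a+\sum_{k=1}^N z_k^{-1}$. The hypothesis together with the tail bound $\sum_{k>N}|\Im z_k^{-1}|\to 0$ (which converges absolutely since $|\Im z_k^{-1}|\le \mu/|z_k|^2$) gives $|\Im b_N|\le 2\mu\gamma$ for $N$ large. I then approximate $e^{-\gamma z^2+b_Nz}$ by
\[
q_n(z)=\left(1+\frac{b_Nz-\gamma z^2}{n}\right)^{n},
\]
whose zeros $(b_N\pm\sqrt{b_N^2+4\gamma n})/(2\gamma)$ have imaginary parts converging to $\Im b_N/(2\gamma)$ as $n\to\infty$, and hence lie in the closed strip of width $\mu$ for $n$ large. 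Multiplying $q_n$ by the truncated polynomial $\prod_{k=1}^N(1-z/z_k)$, whose zeros are in the strip by hypothesis, produces an element of $\SS_\mu$; a diagonal argument in $n,N\to\infty$ yields an approximating sequence converging locally uniformly to $\varphi$.

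The main obstacle is the boundary case $\gamma=0$, where the inequality collapses to the equality $\Im a+\sum\Im z_k^{-1}=0$ and the quadratic polynomial $q_n$ degenerates. There, $\varphi$ has order at most $1$ and must be approximated using linear polynomials $(1+b_Nz/n)^n$, whose single zero $-n/b_N$ is real precisely when $b_N$ is real, a condition supplied in the limit by the collapsed equality. A second technical point is the possible conditional convergence (or even divergence) of $\sum z_k^{-1}$, which requires careful rearrangement of the $e^{z/z_k}$ factors within the truncated product so that $\Im b_N$ remains controlled uniformly in $N$ throughout the approximation.
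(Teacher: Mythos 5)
The paper does not prove this proposition; it is stated with a citation to Korevaar's 1951 Duke paper. So there is no in-paper proof to compare against, and your proposal must be judged on its own. Its skeleton (Hurwitz $\Rightarrow$ zeros in the strip, Hadamard factorization $\Rightarrow$ canonical form, explicit polynomial approximation for sufficiency) is the standard route, and as a high-level outline it is sound.

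However, the necessity direction has a genuine gap. You assert that a local uniform limit of strip-stable polynomials has order $\le 2$ with genus-$1$ canonical product ($\sum |z_k|^{-2}<\infty$) as a ``standard Jensen-type growth estimate,'' and then base the inequality $|\Im a+\sum\Im z_k^{-1}|\le 2\mu\gamma$ on the asymptotic $\ln|\varphi(iy)|=\gamma y^2-y(\Im a+\sum\Im z_k^{-1})+O(\ln|y|)$. That asymptotic is false in general: the genus-$1$ canonical product $\prod(1-iy/z_k)e^{iy/z_k}$ contributes $o(y^2)$ on the imaginary axis, not $O(\ln|y|)$ — already a single remote zero gives a $\ln|1-iy/z_k|-y\Im z_k^{-1}\sim\tfrac{y^2}{2}\Re z_k^{-2}$ term, and summing over all zeros yields a genuine $o(y^2)$ (in general unbounded) correction, not a logarithmic one. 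The comparison with polynomials is also inconclusive as stated, since the degree of $p_n$ is unbounded and $\ln|p_n(iy)|\sim(\deg p_n)\ln|y|$ is not uniformly $O(\ln|y|)$ along the sequence. The actual proof of both the order/genus claim and the constraint on $\Im a+\sum\Im z_k^{-1}$ requires quantitative estimates on the polynomials themselves (Lindwart--P\'olya type arguments, cf.\ Levin Ch.\ VIII), which are the real content of the theorem and which your sketch elides. On the sufficiency side, the $\gamma=0$ boundary case is a real problem, not merely a technical point: the zero $-n/b_N$ of $(1+b_Nz/n)^n$ has imaginary part $n\Im b_N/|b_N|^2$, which leaves any bounded strip as $n\to\infty$ whenever $\Im b_N\neq 0$, so your diagonal $n,N\to\infty$ scheme does not work as written; one must, e.g., use $\Re b_N$ in the exponential approximant and separately absorb the vanishing factor $e^{i(\Im b_N)z}\to 1$.
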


We adopt necessary notation and terminology from \cite{B2}, along with Theorem \ref{symbol-boundop} (below).
For $\alpha\in\NN^n$, define $z^\alpha = z_1^{\alpha_1}z_2^{\alpha_2}\cdots z_n^{\alpha_n}$ and $\alpha !=\alpha_1!\alpha_2!\cdots\alpha_n!$.  Let $\alpha\le\beta$ for $\alpha,\beta\in\RR^n$, denote that $\alpha_j\le\beta_j$ for all $j=1,\ldots,n$, while the condition $\alpha_j>\beta_j$ for all $j=1,\ldots,n$, is denoted $\alpha\gg\beta$.
For $\beta\in(0,\infty)^n$, define the \emph{$\beta$-weighted Bargmann-Fock space}, $\FF_\beta$, as the space of all entire functions $f(z)=\sum_{\alpha\in\NN^n} a_\alpha z^\alpha$, such that
\[
||f||_\beta^2 = \sum_{\alpha\in\NN^n} \frac{\alpha!}{\beta^\alpha} |a_\alpha|^2  <\infty.  
\]

With the definition of the inner product, 
\[
\langle f, g\rangle_\beta = \sum_{\alpha\in\NN^n} \frac{\alpha !}{\beta^\alpha}a_\alpha \overline{b_\alpha} = \frac{\beta_1\cdots\beta_n}{\pi^n}\int_{\CC^n} f(z)\overline{g(z)} \exp\left(-\sum_{i=1}^n\beta_i |z_i|^2 \right)dm,
\]
where $dm$ represents Lebesgue measure, $\FF_\beta$ is a Hilbert space ($||f||^2_\beta=\langle f, f\rangle_\beta$).   If the reproducing kernel is defined by $e_\beta(z,\bar{w}) = \exp(-\sum_{j=1}^n\beta_j z_j \bar{w}_j)$, then
\[
f(w) = \langle f(z), e_\beta(z,\bar{w}) \rangle =  \langle e_\beta(\bar{z},w), f(\bar{z}) \rangle.
\]
By the Cauchy-Schwarz inequality
\begin{equation}\label{bounded-uniform}
|f(w)|^2 = |\langle f(z), e_\beta(z,\bar{w})\rangle|^2 \le ||f||_\beta^2||e_\beta(z,\bar{w})||_\beta^2 = C(|w|)||f||_\beta^2,
\end{equation}
and therefore convergence in $||\cdot||_\beta$ implies uniform convergence on compact subsets of $\CC$ \cite[p. 34]{S}.  

We require the following theorem, which is sharp with respect to the norm parameter (denoted $\gamma$) \cite{B2}.

\begin{theorem}[\cite{B2}]\label{symbol-boundop}
Let $T : \CC[z_1,\ldots, z_n] \rightarrow \CC[[z_1,\ldots, z_m]]$ be a linear operator such that $\overline{G}_T(z,w) \in \FF_{\beta \oplus \gamma}$. Then 
$T$ defines a bounded operator $T : \FF_\alpha \rightarrow \FF_\beta$ for all $\alpha \leq \gamma^{-1}$:
\begin{equation}\label{g-bound}
\|T(f)\|_\beta \leq \|\overline{G}_T(z, \alpha w)\|_{\beta \oplus \alpha} \|f\|_\alpha.
\end{equation}
Moreover $T$ has the integral representation 
\begin{equation}\label{g-int}
T(f)(z)= \int_{\CC^n} f(w)\overline{G}_T(z, -\alpha\overline{w}) \exp\left(-\sum_{i=1}^n\beta_i |w_i|^2 \right)dm.
\end{equation}
Conversely if $T : \FF_\alpha \rightarrow \FF_\beta$ is a bounded operator, then $\overline{G}_T(z,w) \in \FF_{\beta \oplus \gamma}$ for all $\gamma \gg \alpha^{-1}$. 
\end{theorem}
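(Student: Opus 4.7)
The plan is to expand everything in the orthogonal monomial bases of the Bargmann--Fock spaces and reduce the theorem to comparisons of weighted $\ell^2$-sums over the coefficient array of $T$. Write $T(z^\delta)=\sum_{\alpha'}t_{\alpha',\delta}z^{\alpha'}$, so that
$$
\overline{G}_T(z,w)=\sum_{\alpha',\delta}\frac{(-1)^{|\delta|}}{\delta!}\,t_{\alpha',\delta}\,z^{\alpha'}w^\delta.
$$
The hypothesis $\overline{G}_T\in\FF_{\beta\oplus\gamma}$ then becomes $\sum_{\alpha',\delta}\frac{\alpha'!}{\beta^{\alpha'}\,\delta!\,\gamma^\delta}|t_{\alpha',\delta}|^2<\infty$. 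A direct computation under the substitution $w\mapsto\alpha w$ yields $\|\overline{G}_T(z,\alpha w)\|_{\beta\oplus\alpha}^2=\sum\frac{\alpha'!\,\alpha^\delta}{\beta^{\alpha'}\,\delta!}|t_{\alpha',\delta}|^2$, and the termwise bound $\alpha^\delta\gamma^\delta\le 1$ (valid because $\alpha\le\gamma^{-1}$) shows this new quantity is dominated by $\|\overline{G}_T\|_{\beta\oplus\gamma}^2$, hence finite.

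For the norm bound \eqref{g-bound}, given $f=\sum_\delta a_\delta z^\delta\in\FF_\alpha$, one has $T(f)(z)=\sum_{\alpha'}\bigl(\sum_\delta a_\delta t_{\alpha',\delta}\bigr)z^{\alpha'}$, and Cauchy--Schwarz applied to the inner sum via the splitting $a_\delta t_{\alpha',\delta}=\bigl(a_\delta\sqrt{\delta!/\alpha^\delta}\bigr)\bigl(t_{\alpha',\delta}\sqrt{\alpha^\delta/\delta!}\bigr)$ separates off a factor of $\|f\|_\alpha^2$, leaving a weighted sum that, after summing over $\alpha'$ against $\alpha'!/\beta^{\alpha'}$, is exactly $\|\overline{G}_T(z,\alpha w)\|_{\beta\oplus\alpha}^2$. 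This gives \eqref{g-bound} for polynomial inputs, and density of $\CC[z_1,\dots,z_n]$ in $\FF_\alpha$ extends $T$ continuously to all of $\FF_\alpha$.

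The integral representation \eqref{g-int} comes from applying $T$ to the reproducing-kernel identity $f(z)=c_\alpha\int f(w)\,\overline{K(w,\bar z)}\,\exp(-\sum\alpha_j|w_j|^2)\,dm(w)$ and exchanging the order of $T$ and the integral; the identification $T_z[\overline{K(w,\bar z)}]=T_z[\exp(-z\cdot(-\alpha\bar w))]=\overline{G}_T(z,-\alpha\bar w)$ then produces the claimed integrand. For the converse, testing $T$ on monomials (which satisfy $\|z^\delta\|_\alpha^2=\delta!/\alpha^\delta$) gives $\sum_{\alpha'}\frac{\alpha'!}{\beta^{\alpha'}}|t_{\alpha',\delta}|^2=\|T(z^\delta)\|_\beta^2\le\|T\|^2\,\delta!/\alpha^\delta$; substituting into the series defining $\|\overline{G}_T\|_{\beta\oplus\gamma}^2$ yields the upper bound $\|T\|^2\sum_\delta(\alpha\gamma)^{-\delta}$, which is a convergent geometric product precisely when $\alpha_j\gamma_j>1$ for every $j$, i.e.\ $\gamma\gg\alpha^{-1}$.

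The main technical obstacle is rigorously justifying the interchange of $T$ with the integral in the reproducing-kernel step, since $T$ is a priori defined only on polynomials. I would handle this by first verifying \eqref{g-int} on the dense set of exponential kernels $e^{-v\cdot z}$ (where both sides are explicit from the definition of $\overline{G}_T$), then invoking the already-established continuity bound \eqref{g-bound} together with the pointwise-evaluation estimate \eqref{bounded-uniform} to pass to the limit for general $f\in\FF_\alpha$.
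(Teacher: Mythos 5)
Your proof is correct, and it is essentially the argument behind the cited result: the paper itself does not prove this theorem but defers to \cite{B2}, and the natural proof there is exactly the one you reconstruct --- expand in the orthogonal monomial basis, compute the $\FF_{\beta\oplus\gamma}$ norm of the symbol, dominate $\|\overline{G}_T(z,\alpha w)\|_{\beta\oplus\alpha}$ termwise using $\alpha^\delta\gamma^\delta\leq 1$, and then Cauchy--Schwarz on the coefficient array to get \eqref{g-bound}; the converse is the same chain run backwards after testing on monomials and collapsing the geometric product. One small but worthwhile observation you should surface explicitly: your derivation (and the reproducing-kernel computation) makes clear that the Gaussian weight in \eqref{g-int} must be the one associated to the \emph{source} space, i.e.\ $\exp(-\sum_i\alpha_i|w_i|^2)\,dm$ with the normalization $c_\alpha=\alpha_1\cdots\alpha_n/\pi^n$, whereas the displayed formula in the statement has $\beta_i$; this is evidently a typo in the quoted statement, and your version is the right one. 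Your concern about interchanging $T$ with the integral can be dispatched even more directly than via density of exponentials: once \eqref{g-bound} is in hand, both sides of \eqref{g-int} are continuous in $f\in\FF_\alpha$, and for each monomial $f=z^\delta$ the integral is computed exactly by Gaussian orthogonality, $\int w^\delta\,\bar w^{\delta'}\exp(-\sum\alpha_i|w_i|^2)\,dm=c_\alpha^{-1}(\delta!/\alpha^\delta)\,\delta_{\delta,\delta'}$, which cancels the $\alpha^\delta/\delta!$ factor arising from $\overline{G}_T(z,-\alpha\bar w)$ and reproduces $T(z^\delta)$ term by term.
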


Theorem \ref{entire-strip} extends Theorems \ref{strip} and \ref{trans-strip-char} to transcendental entire functions.

\begin{theorem}\label{entire-strip}
Let $T:\CC[z]\to\CC[[z]]$ be a linear operator of rank greater than one, and let $H_1=\{z\,:\,\Im~z>\mu\}$.  
Then $T:\PP(H_1\cup H^*_1)\to\PPP(H_1\cup H^*_1)$ if and only if $T$ satisfies \textup{(}i\textup{)} and \textup{(}ii\textup{)} below, and consequently $\overline{G}_T(z,-w)\in\FF_{(b,c)}$ for some $b,c\in\RR$. 
\renewcommand\theenumi{\roman{enumi}}
\begin{enumerate}  
\item\label{upper2} $e^{i\mu w}\overline{G}_T(z,w)\in\PPP(H_1\times H)$~~or~~$e^{-i\mu w}\overline{G}_T(z,w)\in\PPP(H_1\times H^*)$; 
\item\label{lower2} $e^{i\mu w}\overline{G}_T(z,w)\in\PPP(H^*_1\times H)$~~or~~$e^{-i\mu w}\overline{G}_T(z,w)\in\PPP(H^*_1\times H^*)$. 
\end{enumerate}
Moreover if $\overline{G}_T(z,-w)\in\FF_{(b,c)}$, then 
\renewcommand\theenumi{\arabic{enumi}}
\begin{enumerate}
\item $T$ extends to a bounded linear operator of form \eqref{g-bound} and \eqref{g-int} for all $a<1/c$, and 
\item $T:\PPP(H_1\cup H^*_1)\cap\FF_a\to\PPP(H_1\cup H^*_1)\cap\FF_b$ for all $a\le\frac{1}{c}$.
\end{enumerate}
\end{theorem}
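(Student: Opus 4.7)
The plan is to mirror the proof of Theorem \ref{trans-strip-char}, allowing the image of $T$ to lie in $\CC[[z]]$, and then appeal to Theorem \ref{symbol-boundop} for the norm and integral representation. For necessity of (i) and (ii), I would first observe that stable polynomials span $\CC_n[z]$, so $T:\PP(H_1\cup H_1^*)\to\PPP(H_1\cup H_1^*)$ forces the image of $T$ to consist of entire functions, not merely formal series. The proofs of Lemmas \ref{one}, \ref{spaces}, \ref{line} rely only on Hurwitz's theorem and Lemma \ref{addz}, so Theorem \ref{strip} applies verbatim to $T|_{\CC_n[z]}$ with $\PP(\Omega)$ replaced by $\PPP(\Omega)$ in the conclusion. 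The algebraic characterization of half-plane stability preservers \cite[Theorem~4]{BB1} then yields conditions on the algebraic symbol $G_T^{(n)}=T((z+w)^n)$ for every $n$, and the algebraic/transcendental correspondence \cite[Corollary~2, Theorem~6]{BB1} converts these, on letting $n\to\infty$, into the symbol conditions (i), (ii) on $\overline{G}_T$. Sufficiency runs the same chain in reverse.

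The central new point is to derive $\overline{G}_T(z,-w)\in\FF_{(b,c)}$ for some $b,c\in\RR$ from conditions (i), (ii); I expect this growth estimate to be the main obstacle, because the $\PPP$-notation subsumes but does not display the Gaussian-type bounds that Bargmann-Fock membership requires. My plan is to exploit (i) and (ii) jointly: each places $e^{\pm i\mu w}\overline{G}_T(z,w)$, after a linear change of variable in $z$, in $\PPP$ of a product of two open half-planes. Applying Remark \ref{trans-addz} together with the Korevaar-type factorization of $\SSS_\mu$ recalled just before this theorem, iterated in each variable while the other is held fixed, forces $\overline{G}_T$ to have order at most two in each argument, with finite type. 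Combining the four estimates from (i), (ii) yields a bound of the form $|\overline{G}_T(z,w)|\le C\exp(c_1|z|^2+c_2|w|^2+c_3|z||w|)$ on $\CC^2$, and a Parseval computation in the Bargmann-Fock metric then places $\overline{G}_T(z,-w)$ in $\FF_{(b,c)}$ for $b,c$ sufficiently large.

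Granted the Bargmann-Fock membership, Theorem \ref{symbol-boundop} delivers conclusion (1) immediately: $T$ extends to a bounded operator $\FF_a\to\FF_b$ with the integral representation for all $a<1/c$. For conclusion (2), take $f\in\PPP(H_1\cup H_1^*)\cap\FF_a$. Density of $\PP(H_1\cup H_1^*)$ in $\PPP(H_1\cup H_1^*)\cap\FF_a$ in the $\FF_a$ norm (which follows by a suitable regularization such as $f\mapsto e^{-\epsilon D^2}f$, which preserves $\PPP(H_1\cup H_1^*)$ by a strip-version of Corollary \ref{strip-diff-op}, followed by truncation of the resulting Taylor expansion) yields a sequence $\{f_n\}\subset\PP(H_1\cup H_1^*)$ with $\|f_n-f\|_{\FF_a}\to 0$. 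Then $T(f_n)\to T(f)$ in $\FF_b$, hence uniformly on compact sets by \eqref{bounded-uniform}; each $T(f_n)\in\PPP(H_1\cup H_1^*)$ by the equivalence just established, and Hurwitz's theorem concludes that $T(f)\in\PPP(H_1\cup H_1^*)\cap\FF_b$.
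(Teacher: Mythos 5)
Your sufficiency direction matches the paper's (cite [B2, Theorem~2.3], the identity $\PPP(H_1)\cap\PPP(H_1^*)=\PPP(H_1\cup H_1^*)$ from the Hadamard factorizations, and then Theorem~\ref{symbol-boundop}), so that part is fine.

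The necessity direction has a genuine gap. You assert that ``Theorem~\ref{strip} applies verbatim to $T|_{\CC_n[z]}$ with $\PP(\Omega)$ replaced by $\PPP(\Omega)$,'' and then invoke the algebraic characterization \cite[Theorem~4]{BB1} on the algebraic symbols $T((z+w)^n)$. But Theorem~\ref{strip} and \cite[Theorem~4]{BB1} are theorems about operators $T:\CC_n[z]\to\CC[z]$; they work with the algebraic symbol $G_T=T((z+w)^n)$ as a \emph{polynomial} in $(z,w)$ and appeal to degree-counting (e.g.\ the fundamental-theorem-of-algebra step, and Grace--Walsh--Szeg\H{o} type arguments inside \cite{BB1}). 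Here $T$ lands in $\CC[[z]]$, so $T((z+w)^n)$ is a power series in $z$ of infinite degree, and none of the polynomial machinery transfers ``verbatim.'' You cannot simply substitute $\PPP$ for $\PP$ in the conclusion of Theorem~\ref{strip}; you would have to re-derive the whole chain for entire-function targets, and the algebraic/transcendental bridge you cite, \cite[Corollary~2, Theorem~6]{BB1}, is also only for $T:\CC[z]\to\CC[z]$. This is exactly the difficulty the paper's proof is built to overcome: it introduces the stability-preserving truncation operators $J_n^{\pm\mu}$, which project $\PPP(H_1)$ (resp.\ $\PPP(H_1^*)$) into $\PP_n(H_1)$ (resp.\ $\PP_n(H_1^*)$). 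The compositions $J_n^{\pm\mu}\circ T$ \emph{are} polynomial-valued, Lemma~\ref{line} and the Theorem~\ref{strip} argument apply to them, a degree-monotonicity observation makes the dichotomy uniform in $m$, and $J_n^{\pm\mu}\circ T(p)\to T(p)$ locally uniformly then pushes the polynomial conditions to the transcendental symbol conditions (i)--(ii). Your plan is missing this reduction entirely.

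Two smaller remarks. First, your derivation of $\overline{G}_T(z,-w)\in\FF_{(b,c)}$ from growth estimates in each variable (order $\le 2$, finite type, cross-term absorbed by AM--GM, Parseval) is a genuinely different route from the paper, which simply cites \cite[Theorem~6.6]{BB2}; since the theorem only demands \emph{some} $b,c$, your route is plausible, but as sketched it is several lemmas away from being an argument, whereas the paper's citation is self-contained. Second, for conclusion~(2) your insistence on $\FF_a$-norm approximation (rather than merely locally uniform approximation) is actually the more careful reading of what \eqref{g-bound} requires, though you would still need to verify that the regularized truncations you propose both lie in $\PP(H_1\cup H_1^*)$ and converge in $\FF_a$-norm; the heat regularization alone does not guarantee that its Taylor truncations remain strip-stable, so some care (e.g.\ Jensen-type polynomials) is needed there too.
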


\begin{proof}
Sufficiency of (i) and (ii) is clear for the same reasons as in the proof of Theorem \ref{strip}, with the transcendental extension of the characterization of stability preservers \cite[Theorem 2.3]{B2}, and the identity $\PPP(H_1)\cap\PPP(H^*_1) = \PPP(H_1\cup H^*_1)$ --- this can be verified from the parameter restrictions of the Hadamard factorizations of $\PPP(H_1)$, $\PPP(H^*_1)$, and $\PPP(H_1\cup H^*_1)$ (equation \eqref{stripClosure}, \cite[Chapter VIII]{Levin}).  As the theorem statement asserts, by \cite[Theorem 6.6]{BB2}, if $e^{i\mu w}\overline{G}_T(z,w)\in\PPP(H_1\times H)\cup\PPP(H_1\times H^*)$, then $\overline{G}_T(z,-w)\in\FF_{(b,c)}$ for some $b,c\in\RR$.  The extension described by (1) and (2) then follows from Theorem \ref{symbol-boundop} and the following limiting argument.  For $f\in\PPP(H_1\cup H^*_1)\cap\FF_a$, let $\{f_n\}_{n=0}^\infty$ be a sequence of polynomials such that $f_n\to f$ locally uniformly, then $T(f_n)\to T(f)$ locally uniformly by \eqref{bounded-uniform} and \eqref{g-bound}.  By Hurwitz's theorem, $T(f)\in\PPP(H_1\cup H^*_1)$.

To prove necessity of conditions (i) and (ii), we define two linear operators by their action on shifted monomials:
\begin{align*}
J^{-\mu}_n((z-i\mu)^k) &= \frac{k!}{n^k}\binom{n}{k}(z-i\mu)^k, \text{and}\\
J^{+\mu}_n((z+i\mu)^k) &= \frac{k!}{n^k}\binom{n}{k}(z+i\mu)^k, \text{for each } k\in\NN.
\end{align*}
By the criteria for stability preservers \cite{BB1}, $J^{-\mu}_n:\PPP(H_1)\to\PP_n(H_1)$ and $J^{+\mu}_n:\PPP(H^*_1)\to\PP_n(H^*_1)$, and furthermore $J^{\pm\mu}_n\circ T(p)\to T(p)$ locally uniformly as $n\to\infty$, for all $p\in\CC[z]$ \cite[Lemma 2.2]{CC}.  Therefore the following claim implies conditions (i) and (ii) through the transcendental characterization of stability preservers \cite[Theorem 6]{BB1}.\\
\\
{\bf Claim:} For each $n\in\NN$, $J^{-\mu}_n\circ T:\PP(H_1)\to\PP_n(H_1)\cup\{0\}$ or $J^{-\mu}_n\circ T:\PP(H^*_1)\to\PP_n(H_1)\cup\{0\}$, and similarly  $J^{+\mu}_n\circ T:\PP(H_1)\to\PP_n(H^*_1)\cup\{0\}$ or $J^{+\mu}_n\circ T:\PP(H^*_1)\to\PP_n(H^*_1)\cup\{0\}$.\\

To prove the claim, let $m\in\NN$, and note that Lemma \ref{line} implies either  $J^{-\mu}_n\circ T:\PP_m(H_1)\to\PP_n(H_1)\cup\{0\}$ or $J^{-\mu}_n\circ T:\PP_m(H^r_1)\to\PP_n(H_1)\cup\{0\}$ (where $U^r=\Int(C\setminus U$)).  If $J^{-\mu}_n\circ T:\PP_m(H^r_1)\to\PP_n(H_1)\cup\{0\}$, one can proceed with an argument similar to that used in the proof of Theorem \ref{strip} to show that $J^{-\mu}_n\circ T:\PP_m(H^*_1)\to\PP_n(H_1)\cup\{0\}$.  Repeating this argument for $J^{+\mu}_n\circ T$ establishes that 
\renewcommand\theenumi{\Alph{enumi}}
\begin{enumerate}
\item $J^{-\mu}_n\circ T:\PP_m(H_1)\to\PP_n(H_1)\cup\{0\}$ or $J^{-\mu}_n\circ T:\PP_m(H^*_1)\to\PP_n(H_1)\cup\{0\}$, and 
\item $J^{+\mu}_n\circ T:\PP_m(H_1)\to\PP_n(H^*_1)\cup\{0\}$ or $J^{+\mu}_n\circ T:\PP_m(H^*_1)\to\PP_n(H^*_1)\cup\{0\}$.
\end{enumerate}
Observe that one of the conditions in (A) must hold for all $m\in\NN$ and one of the conditions in (B) must hold for all $m$. Indeed, $J^{-\mu}_n\circ T:\PP_{m_1}(H_1)\not\to\PP_n(H_1)\cup\{0\}$ and $J^{-\mu}_n\circ T:\PP_{m_2}(H_1)\to\PP_n(H_1)\cup\{0\}$ for $m_2>m_1$ is a contradiction, as $\PP_{m_2}(H_1)\supseteq\PP_{m_1}(H_1)$. This establishes the claim.
\end{proof}

Since both of Theorems \ref{stab-strip} and \ref{integral-shrink} involve only differential operators with constant coefficients, we extend them with a more convenient statement, which may be proved using Theorem \ref{symbol-boundop} (see \cite[Example 3.6]{B1}).  

\begin{theorem}[{\cite[Chapter IX, Theorem 8]{Levin}}]\label{inf-diff-op}
Let $f(z),\varphi(z)\in\PPP(H)$, where $f(z)=e^{-\gamma_1 z^2}f_1$, $\varphi=e^{-\gamma_2 z^2}\varphi_1$, and $f_1$ and $\varphi_1$ are entire functions of genus one.  If $\gamma_1\gamma_2<1/4$, then $f(D)\varphi(z)\in\PPP(H)$ and the series 
\begin{equation}\label{diffop-series}
f(D)\varphi(z)= a_0\varphi(z)+a_1\varphi'(z)+a_2\varphi''(z)+\dots+ a_k\varphi^{(k)}(z)+\cdots
\end{equation} 
converges uniformly on any compact subset of $\CC$.  \textup{(}Furthermore, $f(D):\FF_a\to\FF_b$ for all $a<1/c$, $b>c/(1-ac)$ \cite[Proposition 2.5]{LS}\textup{)}.
\end{theorem}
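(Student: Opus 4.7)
My plan is to deduce Theorem \ref{inf-diff-op} from Theorem \ref{symbol-boundop} applied to the symbol of the constant-coefficient differential operator $f(D)$, which has the explicit form $\overline{G}_{f(D)}(z,w)=f(D)e^{-zw}=f(-w)e^{-zw}$.

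First, I would identify Bargmann--Fock spaces housing $\varphi$ and this symbol. A direct Taylor-coefficient estimate using Stirling's formula shows that $e^{-\gamma z^2}\in\FF_\alpha$ precisely when $\alpha>2\gamma$, and since multiplication by an entire function of genus one (with the Gaussian factor already extracted) introduces only sub-Gaussian growth, $\varphi=e^{-\gamma_2 z^2}\varphi_1\in\FF_a$ for every $a>2\gamma_2$. An analogous but slightly subtler computation, expanding $f(-w)e^{-zw}$ in $z$ and $w$ and summing the resulting series against the bivariate Bargmann--Fock weight, shows that $f(-w)e^{-zw}\in\FF_{(b,c)}$ for all sufficiently large $b$ and every $c>2\gamma_1$. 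The hypothesis $\gamma_1\gamma_2<1/4$ is precisely what permits a simultaneous choice $a>2\gamma_2$ and $c>2\gamma_1$ with $ac<1$, which is the input condition for Theorem \ref{symbol-boundop}.

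With these parameters in hand, Theorem \ref{symbol-boundop} yields the parenthetical assertion directly: $f(D)$ extends to a bounded operator $\FF_a\to\FF_b$ with the norm bound \eqref{g-bound} and the integral representation \eqref{g-int}. Since norm convergence in $\FF_b$ implies uniform convergence on compact subsets of $\CC$ by \eqref{bounded-uniform}, the uniform convergence of the series \eqref{diffop-series} follows by observing that the truncation $f^{(N)}(D)=\sum_{k=0}^N a_k D^k$ has symbol equal to the partial Taylor sum in $w$ of $f(-w)e^{-zw}$; this partial sum converges to the full symbol in $\FF_{(b,c)}$, so \eqref{g-bound} gives $f^{(N)}(D)\varphi\to f(D)\varphi$ in $\FF_b$, and hence uniformly on compacta.

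For the stability conclusion $f(D)\varphi\in\PPP(H)$, I would approximate $f$ and $\varphi$ by polynomials $f_n,\varphi_n\in\PP(H)$ obtained from their Hadamard factorizations by truncating the infinite product and replacing each Gaussian factor $e^{-\gamma z^2}$ by $(1-\gamma z^2/N)^N$ (whose zeros are purely real, so the truncation lies in $\PP(H)$). These sequences can be arranged to converge to $f$ and $\varphi$ in $\FF_{(b,c)}$ and $\FF_a$ respectively. For polynomial data, the fact that $f_n(D)$ preserves $\PP(H)$ is a classical special case of the half-plane characterization \cite[Theorem 4]{BB1}, so $f_n(D)\varphi_n\in\PP(H)\cup\{0\}$. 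Combining \eqref{g-bound} with the triangle inequality gives $f_n(D)\varphi_n\to f(D)\varphi$ in $\FF_b$, hence uniformly on compacta, and Hurwitz's theorem then delivers $f(D)\varphi\in\PPP(H)$. The main obstacle is the first step: extracting the sharp threshold $\gamma_1\gamma_2<1/4$ from the Bargmann--Fock norm calculations demands precise coefficient-level control of the genus-one factors $f_1,\varphi_1$, and arranging the polynomial approximations in the last step so that they converge in norm (not merely locally uniformly) requires a careful choice of truncation.
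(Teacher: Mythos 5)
The statement you are asked to prove is a \emph{cited} result: the main part is attributed to Levin and the parenthetical Bargmann--Fock bound to Lieb--Sokal, and the paper supplies no proof of its own. The paper's only guidance is the sentence just before the theorem, ``which may be proved from Theorem~\ref{symbol-boundop} (see \cite[Example 3.6]{B1}),'' and that is exactly the reduction you carry out. Your Bargmann--Fock bookkeeping is on the right track: $e^{-\gamma z^2}\in\FF_\alpha$ precisely when $\alpha>2\gamma$, a genus-one cofactor contributes only sub-Gaussian growth, and computing $\|f(-w)e^{-zw}\|_{(b,c)}$ explicitly (it is finite iff $bc>1$ and $f(-w)\in\FF_{c-1/b}$) together with the box constraint $a\le 1/c$ of Theorem~\ref{symbol-boundop} does produce the threshold $\gamma_1\gamma_2<1/4$. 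The argument for uniform convergence of \eqref{diffop-series} via partial sums converging in the symbol space, $\|\cdot\|_\beta$-boundedness \eqref{g-bound}, and the reproducing-kernel estimate \eqref{bounded-uniform} is also the correct mechanism, and mirrors the paragraph the paper inserts immediately after the theorem.

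There is, however, a genuine gap in the stability step. You assert that for polynomial $f_n\in\PP(H)$ the operator $f_n(D)$ preserves $\PP(H)$ ``by the half-plane characterization \cite[Theorem 4]{BB1}.'' This is not what that characterization says. Throughout the paper (see the proof of Theorem~\ref{constant-coefficients} and the remark after Theorem~\ref{stab-strip}), $f(D)$ preserves $\PP(H)$ if and only if $f\in\PPP(H^*)$, i.e., $f(-w)\in\PPP(H)$ --- the conjugate half-plane, not $\PPP(H)$ itself. A concrete failure: $f(z)=z+i\in\PP(H)$ and $\varphi(z)=(z+i)^2\in\PP(H)$, yet $f(D)\varphi=\varphi'+i\varphi=i(z^2+1)$ vanishes at $i\in H$. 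Levin's actual theorem lives in the Laguerre--P\'olya class $\LP=\PPP(H)\cap\PPP(H^*)$, where this dichotomy disappears (a real-rooted $f$ lies in both $\PPP(H)$ and $\PPP(H^*)$); this is also how the present paper applies the result (Theorems~\ref{stab-strip-entire}, \ref{integral-shrink-entire}, Proposition~\ref{ineq:impart}, all for real entire data). So your Hurwitz argument goes through once the hypothesis is read as $f\in\LP$ (or at least $f\in\PPP(H^*)$), but not for arbitrary $f\in\PPP(H)$ as literally written. You also do not need to approximate $f$ at all: the paper's sketch approximates only $\varphi$ by polynomials, using the already-established uniform convergence of \eqref{diffop-series} to get $f(D)p_n\to f(D)\varphi$ locally uniformly, which sidesteps the delicate question of whether Hadamard truncations of $f$ converge in $\FF_{(b,c)}$-norm that you correctly flag as problematic.
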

 
The uniform convergence in \eqref{diffop-series} is proved by showing (and also implies) uniform boundedness of the partial sums, thus if $p_n$ is any sequence of approximating polynomials which converges uniformly to $\varphi$ on compact subsets of $\CC$, $f(D)p_n\to f(D)\varphi$ uniformly as well.  From standard limiting arguments the extensions of Theorems \ref{stab-strip} and \ref{integral-shrink} follow with the aid of Theorem \ref{inf-diff-op}.  A finite Fourier transform has order at most one, and whence there is no need to restrict $f(D)$ to act on a subclass of $\SSS_\mu(\RR)$ in Theorem \ref{integral-shrink-entire}.

\begin{theorem}\label{stab-strip-entire}
Suppose $T=\sum_{k=0}^\infty a_k D^k = h(D)e^{i\lambda D}$ is a differential operator with constant coefficients $a_k\in\CC$, $k=0,1,2\ldots,$ and that $h\in\PPP(H^*)$.  For  $\mu\ge\lambda\ge 0$, $a<1/(4\gamma)$, $b>a/(1-4\gamma a)$,
$(T+T^*):\FF_a\to\FF_b$ and
\[
(T+T^*):\FF_a\cap\SSS_\mu(\RR)\to\SSS_{\delta}(\RR),
\]
where $\delta=\max\left\{\sqrt{\mu^2-\lambda^2},0\right\}$. 
\end{theorem}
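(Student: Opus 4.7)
The plan is to reduce the theorem to its polynomial analogue, Theorem~\ref{stab-strip}, via approximation by polynomials in $\SS_\mu(\RR)$, using Theorem~\ref{inf-diff-op} to guarantee that the differential operator $h(D)e^{i\lambda D}$ respects uniform convergence on compact subsets of $\CC$.

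\textbf{Step 1 (boundedness $\FF_a\to\FF_b$).} Since $h\in\PPP(H^*)$, its Hadamard factorization contains a Gaussian factor $e^{-\gamma z^2}$ with $\gamma\ge 0$, and the remaining factor has genus at most one; the same holds for $h^*$. Applying the parenthetical in Theorem~\ref{inf-diff-op} to $h(D)$ and $h^*(D)$ yields boundedness $h(D):\FF_a\to\FF_b$ and $h^*(D):\FF_a\to\FF_b$ for every $a<1/(4\gamma)$ and $b>a/(1-4\gamma a)$. Composition with the (bounded) shift operators $e^{\pm i\lambda D}$, which merely translate the argument by $\pm i\lambda$, gives the corresponding bound for $T$ and $T^*$ separately, and hence for $T+T^*$. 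Alternatively, one can compute the symbol $\overline{G}_T(z,w)=h(-w)e^{-(z-i\lambda)w}$ and verify the hypothesis of Theorem~\ref{symbol-boundop} directly; the Gaussian parameter in $w$ is exactly $\gamma$.

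\textbf{Step 2 (image lies in $\SSS_\delta(\RR)$).} Fix $\varphi\in\FF_a\cap\SSS_\mu(\RR)$. By the definition of $\SSS_\mu(\RR)$, choose a sequence of real polynomials $p_n\in\SS_\mu(\RR)$ with $p_n\to\varphi$ locally uniformly on $\CC$. By Theorem~\ref{stab-strip}, for each $n$ we have
\[
(T+T^*)p_n \;=\; h(D)p_n(z+i\lambda)+h^*(D)p_n(z-i\lambda)\;\in\;\SS_\delta(\RR),
\]
where $\delta=\sqrt{\max\{\mu^2-\lambda^2,0\}}$. Since $p_n(z\pm i\lambda)\to\varphi(z\pm i\lambda)$ locally uniformly, the remark following Theorem~\ref{inf-diff-op}, that uniform convergence of the argument implies uniform convergence of the image under $h(D)$, gives that $h(D)p_n(z+i\lambda)\to h(D)\varphi(z+i\lambda)$ and $h^*(D)p_n(z-i\lambda)\to h^*(D)\varphi(z-i\lambda)$, both locally uniformly. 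Consequently $(T+T^*)p_n\to(T+T^*)\varphi$ locally uniformly.

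\textbf{Step 3 (conclusion via Hurwitz).} Each $(T+T^*)p_n$ is a real entire function (in fact polynomial) whose zeros lie in the closed strip $|\Im z|\le\delta$. Hurwitz's theorem, together with the fact that the locally uniform limit of real entire functions is real, forces the limit $(T+T^*)\varphi$ to be either identically zero or a member of $\SSS_\delta(\RR)$. In either case we obtain $(T+T^*)\varphi\in\SSS_\delta(\RR)$, proving the image statement.

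\textbf{Main obstacle.} The principal technical point is verifying the Gaussian-compatibility hypothesis of Theorem~\ref{inf-diff-op}, i.e.\ that the implicit Gaussian parameter $\gamma$ of $h$ (and $h^*$) pairs with the norm parameter $a$ under the constraint $4\gamma a<1$, so that the corresponding Bargmann-Fock mapping is bounded and the series defining $h(D)p_n$ converges uniformly on compacta to $h(D)\varphi$. Once this is in place, steps 2 and 3 are routine applications of Hurwitz's theorem and Theorem~\ref{stab-strip}.
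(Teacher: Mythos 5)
Your proposal matches the paper's own argument, which is only sketched in the paragraph preceding the theorem: the paper explicitly says the result follows "from standard limiting arguments... with the aid of Theorem \ref{inf-diff-op}," and you have fleshed out exactly those steps — approximation of $\varphi\in\SSS_\mu(\RR)$ by polynomials in $\SS_\mu(\RR)$, application of Theorem \ref{stab-strip} to each approximant, locally uniform convergence of $h(D)p_n\to h(D)\varphi$ via Theorem \ref{inf-diff-op}, and Hurwitz to pass to the limit. One small slip: the symbol is $\overline{G}_T(z,w)=h(-w)e^{-(z+i\lambda)w}$, not $h(-w)e^{-(z-i\lambda)w}$ (compare the remark after Theorem \ref{stab-strip}), though this does not affect the argument.
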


\begin{remark}
If $\overline{G}_T(z,w)\in\FF_\beta$, then $\overline{G}_{(T+T^*)}(z,w)\in \FF_\beta$.  In Theorem \ref{stab-strip-entire}, $T+T^*=e^{-\gamma D^2}g(D)$, where $g$ is an entire function of order at most $1$. 
\end{remark}

\begin{theorem}\label{integral-shrink-entire}
Let $f(z)$ be an entire function with the representation 
\[
f(z) = \int_0^1 \cos(\lambda zt) g(t) \;dt, 
\]
where the function $g$ is non-negative and non-decreasing, and $\lambda\in\RR$.  Then $f(D):\SSS_\mu(\RR)\to\SSS_\delta(\RR)$, where $\delta=\max\left\{\sqrt{\mu^2-\lambda^2/4},0\right\}$. 
\end{theorem}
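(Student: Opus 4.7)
The plan is to deduce Theorem \ref{integral-shrink-entire} from the polynomial version (Theorem \ref{integral-shrink}) by a Hurwitz-type approximation argument, with Theorem \ref{inf-diff-op} supplying the required convergence of $f(D)$ when applied to limits of polynomial sequences. The first step is to record two structural facts about the symbol $f$. Since $f$ is a finite cosine integral, it is an entire function of order at most one, and from Theorem \ref{integral-shrink} applied with $\mu=0$ we see that $f(D)$ preserves $\SS_0(\RR)\subseteq\SS_\mu(\RR)$; hence $f(D)$ preserves $\SS_\mu(\RR)$ for every $\mu\ge 0$, and Corollary \ref{real-strip-constant-coeffs} yields $f\in\LP\subseteq\PPP(H)$. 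In particular, in the factorization $f=e^{-\gamma_1 z^2}f_1$ with $f_1$ of genus one one has $\gamma_1=0$, so the hypothesis $\gamma_1\gamma_2<1/4$ of Theorem \ref{inf-diff-op} is trivially satisfied against any $\varphi\in\PPP(H)\supseteq\SSS_\mu(\RR)$.

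Next, fix $\varphi\in\SSS_\mu(\RR)$. By definition of the closure there exists a sequence $\{p_n\}_{n\ge 1}\subset\SS_\mu(\RR)$ with $p_n\to\varphi$ uniformly on compact subsets of $\CC$. By Theorem \ref{integral-shrink}, each $f(D)p_n$ lies in $\SS_\delta(\RR)$ with the stated $\delta$. It remains to show that
\[
f(D)p_n \longrightarrow f(D)\varphi \quad \text{locally uniformly on $\CC$}.
\]
This is exactly the content of the uniform-convergence statement in Theorem \ref{inf-diff-op}: with $\gamma_1=0$, the series $\sum_{k\ge 0} a_k \varphi^{(k)}$ representing $f(D)\varphi$ converges uniformly on compact sets, and the partial-sum boundedness also gives the operator continuity $p_n\mapsto f(D)p_n$ along locally uniformly convergent sequences. (Alternatively, one truncates $f$ at high order, applies the polynomial action, and controls the tail uniformly using the exponential-type estimate on the coefficients of $f$ together with the Cauchy estimates on $p_n-\varphi$ on an enlarged disk.)

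Having obtained $f(D)p_n\to f(D)\varphi$ locally uniformly, with each $f(D)p_n\in\SS_\delta(\RR)$, Hurwitz's theorem delivers the conclusion: either $f(D)\varphi\equiv 0$, or $f(D)\varphi$ is a nonzero real entire function whose zeros all lie in the closed strip $|\Im z|\le\delta$, i.e.\ $f(D)\varphi\in\SSS_\delta(\RR)$. Reality is preserved because each $p_n$ can be taken real (this is automatic from the factorization description of $\SSS_\mu(\RR)$) and $f(D)$ has real Taylor coefficients.

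The only place where real care is required is the passage from ``$f(D)$ on polynomials'' to ``$f(D)$ on $\varphi$'' in a way that allows Hurwitz; this is the main obstacle. What makes it go through is precisely the fact that $f$ has order at most one (so the series \eqref{diffop-series} converges for every $\varphi$ of finite order in $\PPP(H)$ without any restriction on $\varphi$'s exponential type), which is the reason highlighted in the remark preceding the theorem that no subclass of $\SSS_\mu(\RR)$ needs to be singled out.
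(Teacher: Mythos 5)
Your overall strategy matches the paper's: reduce to the polynomial case (Theorem \ref{integral-shrink}), use the convergence furnished by Theorem \ref{inf-diff-op} (via $f$ having order at most one, hence $\gamma_1=0$), and finish with Hurwitz. The paper's own treatment is quite terse and says essentially the same thing, so in spirit your write-up is a faithful expansion.

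There is, however, one incorrect claim that needs repair. You assert $\SSS_\mu(\RR)\subseteq\PPP(H)$ in order to apply Theorem \ref{inf-diff-op} directly to $\varphi$. This inclusion is false for $\mu>0$: for example $p(z)=z^2+\mu^2/4\in\SS_\mu(\RR)$ has a zero at $i\mu/2\in H$, so it is not in $\PP(H)$, and similarly $\SSS_\mu(\RR)\not\subseteq\PPP(H)$. Theorem \ref{inf-diff-op} is stated for $\varphi\in\PPP(H)$, so you cannot apply it verbatim. The fix is simple: since $f(D)$ commutes with translation, apply Theorem \ref{inf-diff-op} to $\varphi(\cdot+i\mu)\in\PPP(H)$ (this is in $\PPP(H)$ because its zeros lie in $\{-2\mu\le\Im z\le 0\}$ and the Hadamard data in \eqref{stripClosure} shift accordingly), obtain locally uniform convergence of $\sum_k a_k\varphi^{(k)}(z+i\mu)$, then translate back by $-i\mu$. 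With this adjustment your argument is complete and agrees with the paper's intended proof. (Your ``alternative'' tail-estimate sketch is also a viable route and sidesteps the issue entirely.)
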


We end by stating a necessary and sufficient condition for an entire function to have zeros only in a strip.  This inequality reduces to one already established for the real line in the case the strip width is set to zero \cite{CV}.

\begin{proposition}\label{ineq:impart}
Let $f$ be a real entire function.  Then $f\in\SSS_\mu(\RR)$ if and only if $f$ has order at most two, and either for all $\Im~z>\mu$ 
\[
\Im\{-f'(z)\overline{f(z)}\}> 0,
\]
 or $\Im\{-f'(z)\overline{f(z)}\}\equiv 0$.
\end{proposition}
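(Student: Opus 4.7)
The plan rests on the identity
\[
\frac{\partial}{\partial y}|f(z)|^2 \;=\; 2\,\Im\{-f'(z)\overline{f(z)}\},\qquad z=x+iy,
\]
which follows from $\partial_y(f\overline{f}) = if'(z)\overline{f(z)} - if(z)\overline{f'(z)} = -2\Im(f'(z)\overline{f(z)})$. Under this identification the proposition asserts that a real entire function of order at most two lies in $\SSS_\mu(\RR)$ if and only if $|f(z)|^2$ is strictly increasing in $y$ throughout the half-plane $\{\Im z>\mu\}$, or is independent of $y$ there.

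For the forward direction I would invoke the Hadamard-type factorization \eqref{stripClosure} of $f\in\SSS_\mu(\RR)$. Since $f$ is real, $c$, $a$, $\gamma\ge 0$ are real and the zeros $\{z_k\}$ come in complex-conjugate pairs together with any real zeros, all lying in $|\Im z_k|\le\mu$. A term-by-term computation gives
\[
\partial_y\log|f(z)|^2 \;=\; 4\gamma y + \frac{2my}{|z|^2} + \sum_k \frac{2(y-\Im z_k)}{|z-z_k|^2},
\]
the contributions $-2\,\Im(1/z_k)$ from the canonical exponential factors cancelling pairwise within each conjugate pair (and vanishing on real zeros). Every summand is nonnegative on $\{y>\mu\}$ since $y>\mu\ge|\Im z_k|$, and the sum is strictly positive except in the degenerate case $\gamma=0$, $m=0$, and no zeros, i.e.\ $f(z)=ce^{az}$ with $c,a\in\RR$, in which case $\Im\{-f'\overline f\}\equiv 0$ on all of $\CC$.

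For the reverse direction, the strict-inequality hypothesis forces $|f|^2$ to be strictly increasing in $y$ on $\{y>\mu\}$, hence positive there, so $f$ has no zeros with $\Im z>\mu$; realness of $f$ then rules out zeros with $\Im z<-\mu$. Combined with the order hypothesis, \eqref{stripClosure} yields $f\in\SSS_\mu(\RR)$, the constraint $-2\mu\gamma\le\Im a+\sum\Im z_k^{-1}\le 2\mu\gamma$ being automatic for real $f$ (both inner quantities vanish). In the degenerate case $\Im\{-f'\overline f\}\equiv 0$, $|f|^2$ is independent of $y$ on $\{y>\mu\}$; since $y\mapsto f(x+iy)f(x-iy)$ is entire in $y$ for each $x$, the identity theorem extends this to $f(z)f(\bar z)=f(\Re z)^2$ on all of $\CC$. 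This forbids any zeros (a complex zero $z_0$ would force $f(\Re z_0)=0$, and then $f(\Re z_0+iy)f(\Re z_0-iy)\equiv 0$ would place one of the holomorphic factors in $y$ identically at zero, giving $f\equiv 0$), so $f=e^g$ with $g$ a polynomial of degree at most two; substituting into $g(z)+g(\bar z)=2g(\Re z)$ and comparing coefficients forces the quadratic term to vanish and $g$ to have real coefficients, yielding $f(z)=Ae^{\beta z}$ with $A,\beta\in\RR$, which is trivially in $\SSS_\mu(\RR)$.

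The main technical point is the term-by-term differentiation of $\log|f|^2$ at order exactly two: uniform convergence of $\sum_k |y-\Im z_k|/|z-z_k|^2$ on compacta in $\{y>\mu\}$ is required, which follows from $\sum|z_k|^{-2}<\infty$ as encoded in \eqref{stripClosure}. After that the argument is bookkeeping; the other minor care is in the $\equiv 0$ branch, where the identity $f(z)f(\bar z)=f(\Re z)^2$ must be upgraded from the open set $\{y>\mu\}$ to all of $\CC$ via analyticity in $y$ before the no-zeros conclusion can be drawn.
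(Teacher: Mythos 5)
Your approach is genuinely different from the paper's. The paper routes through its operator-theoretic framework: it applies $(1+iD)$, reduces membership in $\SSS_\mu(\RR)$ to a statement about $f+if'$ in $\HDD$ via Theorem~\ref{constant-coefficients}, Theorem~\ref{inf-diff-op}, and Lemma~\ref{h-star-bar-conditions}, and then identifies $\Im\{-f'(z)\overline{f(z)}\}$ with the $\mu$-Wronskian $W_\lambda[f,f']$ via Lemma~\ref{wronskians}. You instead work directly from the Hadamard-type factorization \eqref{stripClosure} and the identity $\partial_y|f|^2 = 2\,\Im\{-f'\overline f\}$, which makes the geometric content transparent. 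Your forward direction is sound, including the uniform-convergence caveat you flag and the pairwise cancellation of the $-2\,\Im(1/z_k)$ contributions across conjugate zeros; the degenerate branch is also handled correctly.

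The converse direction, however, has a gap at the step ``Combined with the order hypothesis, \eqref{stripClosure} yields $f\in\SSS_\mu(\RR)$.'' The representation \eqref{stripClosure} is a \emph{characterization} of $\SSS_\mu$, not an automatic consequence of having order at most two and no zeros outside the closed strip, and you verify only one of its constraints (the one on $\Im a + \sum\Im z_k^{-1}$). The constraint $\gamma\ge 0$ is not automatic: $f(z)=e^{z^2}$ is real, of order two, and zero-free, yet its Hadamard parameter is $\gamma=-1$, so $f\notin\SSS_\mu$. Your criterion correctly excludes it, since $\Im\{-f'\overline f\}=-2y\,e^{2(x^2-y^2)}<0$ for $y>0$, but nothing in your reverse-direction argument uses the positivity hypothesis to force $\gamma\ge 0$ — the argument as written uses only the absence of zeros outside the strip. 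A similar concern applies to the genus-one canonical product in \eqref{stripClosure}, which presumes $\sum|z_k|^{-2}<\infty$; this too is not guaranteed at order exactly two. Both defects are plausibly repairable by running your forward computation on the unconstrained Hadamard factorization and extracting the sign of $\gamma$ (and ruling out genus two) from the positivity hypothesis, but as written the converse assumes the form it is meant to establish.
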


\begin{proof}
Since any function $f\in\SSS_\mu(\RR)$ has order at most two (see \eqref{stripClosure}), and this is a hypothesis on $f$ in the second clause, let us continue assuming that $f$ has order two or less. 
By Theorem \ref{constant-coefficients}, if $f\in\PPP_\mu$, then $(1+iD)f = f+if'$ is also in $\PPP_\mu$. If f is real, then $f\in\SSS_\mu(\RR)$ if and only if  $(1+iD)f=f+if'\in\PPP_\mu$ by Theorems \ref{constant-coefficients}, \ref{inf-diff-op}, and Proposition \ref{h-star-bar-conditions}. If $f+if'\in\PPP_\mu$, then either $f+if'\in\PPP_\mu\cap\HD$ or $f+if'\in\PPP_\mu\cap\HDD\setminus\HD$. By Lemma \ref{wronskians}, $f+if'\in\PPP_\mu\cap\HD$ if and only if
$W_\lambda[f',f](x)<0$ for all $\lambda>\mu$, $x\in\RR$, or equivalently 
\[
2 W_\lambda[f,f'] = \frac{1}{\lambda} \Im\{-f'(z)\overline{f(z)}\}> 0,
\]
where $z=x+i\lambda$.
Otherwise, by Lemma \ref{h-star-bar-conditions} (iii), $f+if'\in\PPP_\mu\cap\HDD\setminus\HD$ if and only if 
\begin{equation}\label{eq:Dboundary}
(a+ib)(f + if')=\varphi,
\end{equation}
where $\varphi\in\SSS_\mu(\RR)$, $a,b\in\RR$. The imaginary part of \eqref{eq:Dboundary} is $bf+af'=0$, which implies that $f$ (and also $\varphi$) is of the form $f=ce^{rz}$ for $c\in\CC$ and $r\in\RR$, whence $\Im\{-f'(z)\overline{f(z)}\}\equiv 0$. 
\end{proof}

\section{Fourier transforms with only real zeros}
\label{fourier-kernel}
N.~G.~de~Bruijn and L.~Ilieff independently proved that if $\phi(t)$ is an even entire function and $\phi'(t)\in\LP$, then the Fourier transform of $e^{\phi(it)}$ will have only real zeros, provided it exists \cite{D,I}.  We find a new proof for a result of de~Bruijn (Theorem \ref{fourier-polynomial-even}) by modifying Ilieff's method of proof, and extend it in Corollary \ref{fourier-polynomial}, and Proposition \ref{fourier-cor}.  Similar results have been obtained by H. Ki and Y.-O. Kim \cite{KKFT} by extending de~Bruijn's argument involving the saddle point method.  

\begin{lemma}
If $G\in\CC[z]$ with $\deg(G)=2d\neq 0$, $G(it)=-ct^{2d}+\cdots$, and $c>0$, then as $m\to\infty$, the roots of $G(it)=-m$, denoted $\{t_k\}_{k=1}^{2d}$, satisfy
\begin{equation}\label{unity-roots}
t_k=e^{i2\pi k/2d}\left(\frac{m}{c}\right)^{1/2d} + O\left(1\right) \qquad (m\to\infty),
\end{equation}
for $k= 1, 2, 3, \ldots, 2d$.  
\end{lemma}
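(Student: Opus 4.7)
The plan is to reduce the equation $H(it)=-m$ to a small perturbation of the equation $1-u^{2d}=0$ by a suitable rescaling, and then to apply Rouch\'e's theorem near each $2d$-th root of unity.

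Write $H(it)=-ct^{2d}+a_{2d-1}t^{2d-1}+\cdots+a_{0}$ with constants $a_{j}\in\CC$. Set $R=(m/c)^{1/2d}$ (choose any branch) and let $t=Ru$. Since $cR^{2d}=m$, dividing the equation $H(it)+m=0$ by $m$ yields
\[
\frac{H(iRu)+m}{m}=1-u^{2d}+E(u,R),
\qquad E(u,R)=\sum_{j=0}^{2d-1}\frac{a_{j}}{c}\,R^{\,j-2d}u^{j}.
\]
On any bounded set in the $u$-plane, $|E(u,R)|\le C_{0}/R$ uniformly in $u$, for $R$ large.

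Fix $\epsilon>0$ small enough that the closed disks $\overline{B(\omega_{k},\epsilon)}$, $\omega_{k}=e^{i2\pi k/2d}$, are pairwise disjoint. Since $1-u^{2d}$ has simple zeros at the $\omega_{k}$, there is a constant $c_{\epsilon}>0$ with $|1-u^{2d}|\ge c_{\epsilon}$ on every circle $|u-\omega_{k}|=\epsilon$. For $R$ large enough that $C_{0}/R<c_{\epsilon}$, Rouch\'e's theorem, applied on each of the $2d$ circles, shows that $1-u^{2d}+E(u,R)$ has exactly one zero $u_{k}$ inside $B(\omega_{k},\epsilon)$, accounting for all $2d$ roots.

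To sharpen the location, note that at $u=u_{k}$ one has $u_{k}^{2d}=1+E(u_{k},R)=1+O(1/R)$, and hence $u_{k}=\omega_{k}\bigl(1+O(1/R)\bigr)^{1/2d}=\omega_{k}+O(1/R)$ by a Taylor expansion of $(1+x)^{1/2d}$ around $x=0$ (valid since the $O(1/R)$ term is small). Transforming back to $t$ via $t_{k}=Ru_{k}$ gives
\[
t_{k}=R\omega_{k}+O(1)=e^{i2\pi k/2d}\!\left(\frac{m}{c}\right)^{\!1/2d}+O(1)\qquad(m\to\infty),
\]
which is the claimed asymptotic. The only delicate point is the uniformity of the bound on $E(u,R)$ on the disks used for Rouch\'e, but this is immediate since we only need it on a fixed bounded region of the $u$-plane.
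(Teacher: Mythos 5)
Your proof is correct, and it follows the paper's rescaling ($t = Ru$ with $R = (m/c)^{1/2d}$, reducing to a perturbation of $u^{2d}=1$) but diverges in how the quantitative $O(1/R)$ estimate is obtained. The paper invokes Hurwitz's theorem to get mere convergence of the rescaled roots to the $2d$-th roots of unity, and then cites Brillinger's result on local analyticity of polynomial roots as functions of the coefficients to upgrade this to the $O(1/\lambda)$ rate. You instead use Rouch\'e's theorem to isolate one root $u_k$ in a small disk around each $\omega_k$, and then bootstrap the relation $u_k^{2d}=1+E(u_k,R)$ directly to deduce $u_k=\omega_k\bigl(1+O(1/R)\bigr)^{1/2d}=\omega_k+O(1/R)$; the prior localization picks out the correct branch. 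Your approach is self-contained and avoids the external citation, making the rate estimate entirely elementary; the paper's is shorter given the reference. One small point worth making explicit is that the Rouch\'e step is really needed (not just to count roots but to determine which root of $1+E(u_k,R)$ the perturbed $u_k$ is near), and you do say this, so the argument is complete.
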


\begin{proof}
Let $m=c\lambda^{2d}$, $t=\lambda x$, and substitute these in the equation $G(it)=m$, yielding
\begin{equation}\label{uroots2}
x^{2d} + \frac{1}{\lambda} O(x^{2d-1})=1.
\end{equation} 
Thus, by Hurwtiz's theorem the roots of \eqref{uroots2} approach the roots of unity, and from the local analyticity of the roots of \eqref{uroots2} in $s=1/\lambda$ around $s=0$ (see \cite{Brillinger}), the roots are given by
\[
x=e^{i2\pi k/2d}+O(1/\lambda) \qquad (\lambda\to\infty),
\]
$k= 1, 2, 3, \ldots, 2d$.  Then, replacing $x=t/\lambda$ and $\lambda=(m/c)^{1/2d}$ yields \eqref{unity-roots}. 
\end{proof}

\begin{lemma}\label{jensen}
There is a real constant $A>1$ such that
\begin{equation}\label{exp-jensen-estimate}
\left|1+\frac{z}{n}\right|^n\le e^{\Re\;z/2},
\end{equation}
for all $z\in\CC$ with $-nA < \Re~z \le 0$ and $|\Im~z/\Re~z|<1/2$.
\end{lemma}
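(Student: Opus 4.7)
The plan is to reduce the claim to an elementary one-variable inequality and verify it by calculus. Setting $w=z/n$ and writing $w=u+iv$, the desired estimate $|1+z/n|^n\le e^{\Re z/2}$ is equivalent (after squaring and taking $n$-th roots) to
\[
|1+w|^2 \;\le\; e^{u},
\]
which must be shown for $-A<u\le 0$ and $|v|<|u|/2$. Since $|1+w|^2=(1+u)^2+v^2$ and the hypothesis $|v/u|<1/2$ gives $v^2<u^2/4$, it suffices to prove the single-variable inequality
\[
(1+u)^2+\tfrac14 u^2 \;\le\; e^u
\]
on some interval $-A\le u\le 0$ with $A>1$.

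To this end, I would consider $g(u):=e^u-(1+u)^2-\tfrac14 u^2$ and simply exhibit an interval on which $g\ge 0$. One computes $g(0)=0$ and $g'(u)=e^u-2(1+u)-u/2$, so $g'(0)=-1<0$, which shows $g>0$ immediately to the left of the origin. Moreover the explicit value
\[
g(-1)=\tfrac{1}{e}-\tfrac14\,>\,0
\]
together with continuity produces a number $A>1$ (for instance, any $A$ slightly larger than $1$, or the largest value for which $g$ remains nonnegative on $[-A,0]$) such that $g(u)\ge 0$ for all $u\in[-A,0]$. This gives the required inequality.

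Unwinding the substitution $u=\Re z/n$, $v=\Im z/n$, the condition $-nA<\Re z\le 0$ becomes $-A<u\le 0$, and $|\Im z/\Re z|<1/2$ becomes $|v/u|<1/2$. The inequality then yields $|1+z/n|^{2n}\le e^{\Re z}$, and taking square roots completes the proof. No step looks difficult: the only real work is the elementary calculus verification that $g(u)\ge 0$ on some interval extending past $u=-1$, which is immediate from the value $g(-1)=1/e-1/4>0$ combined with continuity.
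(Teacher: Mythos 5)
Your reduction to the one-variable inequality $g(u):=e^{u}-(1+u)^2-\tfrac14u^2\ge 0$ on an interval $[-A,0]$ with $A>1$ is exactly the paper's reduction, and your spot checks $g(0)=0$, $g'(0)=-1<0$, $g(-1)=\tfrac1e-\tfrac14>0$ are the same. However, these three facts together with continuity do \emph{not} by themselves yield $g\ge 0$ on all of $[-1,0]$: knowing that $g$ vanishes at $0$, is positive just to the left of $0$, and is positive at $-1$ leaves open (a priori) the possibility that $g$ dips below zero somewhere in the interior of $(-1,0)$, in which case the ``largest $A$ with $g\ge0$ on $[-A,0]$'' would be strictly less than $1$. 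So the step you describe as ``immediate from $g(-1)>0$ combined with continuity'' is the genuine gap in the proposal.

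The paper closes this gap with a short structural argument that you omit. Writing $g'(u)=e^{u}-\bigl(2+\tfrac52 u\bigr)$, the critical points of $g$ are exactly the intersections of the convex graph $e^{u}$ with the line $2+\tfrac52u$. A convex function meets a line in at most two points, and since $e^{0}=1<2$ while $e^{u}\to 0$ and $2+\tfrac52 u\to-\infty$ as $u\to-\infty$, there is precisely one negative critical point, call it $u_0<0$. Thus $g'<0$ on $(u_0,0)$ and $g'>0$ on $(-\infty,u_0)$, so $g$ is strictly decreasing on $(u_0,0)$; combined with $g(0)=0$ this forces $g>0$ on $(u_0,0)$, and the increasing-then-decreasing shape gives $g$ a unique zero $-A<u_0$ to the left, with $g\ge 0$ exactly on $[-A,0]$ among nonpositive $u$. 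Only now does $g(-1)>0$ yield $-1\in(-A,0)$, i.e.\ $A>1$. You should insert this one-critical-point observation; once it is in place, the rest of your write-up is correct and matches the paper's proof.
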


\begin{proof}
With $z=x+iy$, the inequality in \eqref{exp-jensen-estimate} can be rewritten as 
\begin{equation}\label{FXeps}
e^X - (1+X)^2 - Y^2 \ge 0,
\end{equation}
where $X=x/n$, and $Y=y/n$. 
For \eqref{FXeps} to hold when $|y/x| \le 1/2$, it is sufficient that 
\begin{equation}\label{FXeps2}
F(X) = e^X - (1+X)^2 - X^2/4 \ge 0.
\end{equation}
It follows from $F(0) = 0$, and
\[\frac{dF}{dX}(0) = -1,\]
that there is always an interval $-A<X\le0$ where \eqref{FXeps2} holds. The critical points of $F$ occur at the intersection points of the convex function $e^X$ and the line $5X/2 + 2$.  Since $F(-1)=1/e-1/4>0$, and $F$ has only one critical point for $X<0$, it follows that $-A<-1$.
\end{proof}

\begin{lemma}\label{Fnm}
Let $G$ be a real polynomial of the form $G(it)=-ct^{2d}+\cdots$, where $\deg(G)=2d\neq 0$, and $c>0$, such that $G'\in\LP$.  For $m,n\in\NN$, let
\begin{equation}\label{fm-approx-integral}
F_{n,m}(z) = \int_{-\bar{a}_m}^{a_m} (1+G(it)/m)^ne^{itz}~dt,
\end{equation}
where $G(ia_m)=-m=\overline{G(ia_m)}=G(-i\bar{a}_m)$.  Then $F_{m,n}\in\LP$ for all $n,m\in\NN$.
\end{lemma}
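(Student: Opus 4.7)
The plan is to first trivialize the contour in the integral by a change of variable, then use an inductive argument that expresses $F_{n,m}$ as a differential operator applied to a canonical $\LP$ function, and finally check the delicate step where the operator has complex zeros.

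First I would substitute $t = s+i\beta$ with $a_m = \alpha + i\beta$, which moves the integration contour onto the real interval $[-\alpha,\alpha]$ and pulls out a nonvanishing factor $e^{-\beta z}$, giving
\[
F_{n,m}(z) = e^{-\beta z}\int_{-\alpha}^{\alpha} p(s)^n e^{isz}\,ds,
\qquad p(s) := 1 + \tfrac{1}{m}H(is - \beta).
\]
The polynomial $p(s)$ has $p(\pm\alpha)=0$ by the very definition of $a_m$, and $\overline{p(s)} = p(-s)$ on $\RR$ because $H$ has real coefficients. The latter symmetry, followed by the change of variable $s\mapsto -s$, yields $\overline{F_{n,m}(\bar z)}=F_{n,m}(z)$, so $F_{n,m}$ is a real entire function; the task is to show its zeros are real.

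Next I would set up an induction on $n$. Differentiating under the integral, together with the tautology $H(it) = m\bigl[(1+H(it)/m)-1\bigr]$, gives the operator recursion
\[
F_{n+1,m}(z) = \bigl(1+\tfrac{1}{m}H(D)\bigr)F_{n,m}(z),
\]
so that $F_{n,m} = (1+m^{-1}H(D))^n F_{0,m}$ with base case $F_{0,m}(z) = 2e^{-\beta z}\sin(\alpha z)/z \in \LP$. Using the conjugation identity $f(D)[e^{-\beta z}\phi] = e^{-\beta z}f(D-\beta)\phi$, it is enough to verify that $\bigl(1+m^{-1}H(D-\beta)\bigr)^n$ sends $2\sin(\alpha z)/z$ into $\LP$. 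Factoring $1+H(w-\beta)/m = -(c/m)\prod_k (w - w_k)$, the construction puts $w = \pm i\alpha$ among the roots (these correspond to $u = -\beta \pm i\alpha$, i.e.\ $\pm i a_m$ and $\mp i\bar a_m$), so the operator carries an explicit factor of $(D^{2}+\alpha^{2})$, while the symmetries $w\mapsto -w$ and $w\mapsto\bar w$ (from $H\in\RR[w]$ and $-m\in\RR$) organize the remaining $2d-2$ roots into orbits giving a real polynomial factor $\widetilde Q(D)$ of degree $2d-2$.

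A direct computation shows $(D^{2}+\alpha^{2})[2\sin(\alpha z)/z] = (4/z^{3})\bigl(\sin\alpha z - \alpha z\cos\alpha z\bigr)$, whose zeros satisfy the classical real-root equation $\tan\alpha z = \alpha z$; iterating yields the spherical-Bessel-type functions $z^{-n-1/2}J_{n+1/2}(\alpha z)\in\LP$. So the factor $(D^{2}+\alpha^{2})^{n}$ behaves perfectly on $F_{0,m}$. The genuine obstacle is the leftover operator $\widetilde Q(D)^{n}$: although $\widetilde Q$ has real coefficients, its roots $w_k$ (for $k\ge 3$) are in general non-real and come in four-orbits $\{w, -w, \bar w, -\bar w\}$, so by Corollary~\ref{real-strip-constant-coeffs} the operator is \emph{not} an $\LP$-preserver in general. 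To finish, I would argue orbit-by-orbit: each four-orbit contributes a real quartic factor of the form $((D-\beta')^{2}+\alpha'^{2})((D+\beta')^{2}+\alpha'^{2})$ (degenerating to $(D^{2}+\gamma^{2})$ or to a pair of real-root factors in the boundary cases where the orbit lies on the real or imaginary axis), and the plan is to show that each such quartic, thanks to the specific bilateral exponential shift produced by the conjugation identity, preserves membership in the subclass generated by the Bessel-type function already in hand. This extension of the $\tan\alpha z = \alpha z$ identity, which runs parallel to Theorem~\ref{debruijn-jensen} but applied to the factorization of $\widetilde Q$, is where the heaviest computation lies; once it is carried out, combining with the real-root factors (which preserve $\LP$ automatically) and with the base case yields $F_{n,m}\in\LP$ for all $n,m$.
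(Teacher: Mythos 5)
Your operator recursion is correct but leads you into a dead end, and the paper's proof uses a different—and much slicker—recursion that you appear not to have found.

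You correctly verify that $F_{n,m} = (1+m^{-1}H(D))^n F_{0,m}$ with $F_{0,m}(z) = 2e^{-\beta z}\sin(\alpha z)/z \in \LP$, and you correctly diagnose that $1+m^{-1}H(w)$ is not generally in $\LP$, so that the operator $(1+m^{-1}H(D))$ does not preserve $\LP$. Your proposed repair—factor the operator, peel off the $(D^2+\alpha^2)^n$ part against $\sin(\alpha z)/z$ to land in Bessel-type functions, and then argue ``orbit-by-orbit'' that the remaining real quartic factors $\widetilde Q(D)^n$ stay inside some hand-crafted subclass—is not carried out, and I do not see how it could be. The crucial difficulty is that $\widetilde Q(D)$ is not an $\LP$-preserver, and the conclusion $F_{n,m}\in\LP$ depends delicately on cancellations between $\widetilde Q(D)$ and the specific function it is applied to. More fundamentally, your argument makes no use anywhere of the hypothesis (inherited from Theorem~\ref{fourier-polynomial-even}, and explicitly invoked in the paper's proof) that $H'\in\LP$; a proof of this lemma that does not use $H'\in\LP$ would prove a false statement, so the hypothesis must enter somewhere, and in your framework there is no natural place for it.

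The paper avoids all of this with a single integration by parts. Because $(1+H(it)/m)$ vanishes at both endpoints $t=a_m$ and $t=-\bar a_m$, integrating $F_{n,m}$ by parts produces no boundary term and yields the recursion $F_{n,m}(z) = (\text{const})\cdot z^{-1}H'(D)F_{n-1,m}(z)$ rather than your $(1+m^{-1}H(D))F_{n-1,m}$. Now $H'(D)$ is an $\LP$-preserver precisely because $H'\in\LP$, and dividing by $z$ is harmless since $F_{n,m}$ is entire. Induction on $n$ from the same base case $F_{0,m}$ finishes the proof. In short: your recursion is algebraically true but structurally unhelpful, whereas the paper's integration by parts converts the factor $(1+H(it)/m)$ into the factor $H'(D)/z$, and it is $H'$ rather than $1+H/m$ that is real-rooted. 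Without that step your argument has a genuine gap.
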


\begin{proof}
Let $n\ge 1$.  Integration of $F_{n,m}$ by parts yields 
\begin{align*}
\int_{-\bar{a}_m}^{a_m} (1+G(it)/m)^ne^{itz}~dt &= \left. \frac{(1+G(it)/m)^ne^{itz}}{iz}\right|_{-\bar{a}_m}^{a_m}\\
&\qquad - \int_{-\bar{a}_m}^{a_m} \frac{n}{z}G'(it)(1+G(it)/m)^{n-1}e^{itz}~dt\\
 &= \frac{n}{z} G'(D)\int_{-\bar{a}_m}^{a_m} (1+G(it)/m)^{n-1}e^{itz}~dt.
\end{align*}
Since $G'\in\LP$, the operator $G'(D)$ preserves reality of zeros by the Hermite-Poulain Theorem.  It follows by induction on $n$ that $F_{n,m}\in\LP$ for all $m,n\in\NN$, once it is established that the base case has only real zeros for any $m\in\NN$.  Indeed,
\[
F_{0,m} = \int_{-\bar{a}_m}^{a_m} e^{itz}~dt = \frac{2 e^{-a_y z}\sin(a_x z)}{z}\in\LP,
\]
for any $a_m=a_x+i a_y\in\CC$ ($a_x, a_y\in\RR$).
\end{proof}

\begin{theorem}[{\cite[Theorem 20]{D}}]\label{fourier-polynomial-even}
Let $F(z)$ be an entire function possessing the representation
\begin{equation}\label{f-trans}
F(z) = \int_{-\infty}^\infty e^{G(it)}e^{itz}~dt,
\end{equation}
where $G'(x)\in\RR[z]\cap\LP$, and $G(it)=-ct^{2d}+\cdots$, where $\deg(G)=2d\neq 0$, and $c>0$. Then $F(z)\in\LP$.    
\end{theorem}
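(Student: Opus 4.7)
The plan is to realize $F$ as a locally uniform limit on compact subsets of $\CC$ of the entire functions $F_{m,m}$ furnished by Lemma \ref{Fnm}, each of which already lies in $\LP$. Since $\LP$ is closed under locally uniform limits, and since Fourier inversion forces $F\not\equiv 0$, Hurwitz's theorem will give $F\in\LP$. By the root-location lemma preceding Lemma \ref{jensen}, one may choose the root $a_m$ of $H(it)=-m$ so that $a_m=(m/c)^{1/2d}+O(1)$ as $m\to\infty$; after a harmless Cauchy deformation, the contour from $-\bar a_m$ to $a_m$ defining $F_{m,m}$ becomes the real segment $[-\Re a_m, \Re a_m]$ plus two vertical pieces of bounded length, and this exhausts the real line in the limit.

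The central task is to prove $F_{m,m}\to F$ uniformly on each compact $K\subset\CC$. Setting $R=\sup_{z\in K}|z|$, I would split
\[
F(z)-F_{m,m}(z)=\int_{|t|>\Re a_m}\!\!e^{H(it)}e^{itz}\,dt+\int_{-\Re a_m}^{\Re a_m}\!\bigl(e^{H(it)}-(1+H(it)/m)^m\bigr)e^{itz}\,dt+E_m(z),
\]
where $E_m(z)$ collects the two vertical corrections. The tail and $E_m$ are both controlled by the super-exponential decay $|e^{H(it)}|\le e^{-ct^{2d}/2}$ valid for $|t|$ large, weighed against the at-most-exponential factor $|e^{itz}|\le e^{|t|R}$; both vanish as $m\to\infty$. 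The main interior integral is a classical candidate for dominated convergence, with pointwise limit $(1+H(it)/m)^m\to e^{H(it)}$.

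Producing a uniform integrable dominator for the interior integral is the decisive step, and the main obstacle I expect. Choose $T_0$ so large that for $|t|\ge T_0$ one has $\Re H(it)\le -ct^{2d}/2\le 0$ and $|\Im H(it)/\Re H(it)|<1/2$; both follow from $H(it)=-ct^{2d}+O(t^{2d-1})$. Since $A>1$ in Lemma \ref{jensen}, for $T_0\le|t|\le\Re a_m\sim(m/c)^{1/2d}$ one has $-mA<\Re H(it)\le 0$ for $m$ large enough, so Lemma \ref{jensen} applies with $z=H(it)$ and $n=m$, yielding
\[
|1+H(it)/m|^m\le e^{\Re H(it)/2}\le e^{-ct^{2d}/4}.
\]
On the compact range $|t|\le T_0$, the crude bound $|1+H(it)/m|^m\le(1+|H(it)|/m)^m\le e^{|H(it)|}$ supplies a constant majorant. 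Patching these two bounds yields an $m$-independent integrable dominator; dominated convergence then delivers the required uniform convergence on $K$, and Hurwitz's theorem concludes the proof.
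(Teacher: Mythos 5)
Your proposal is correct and follows essentially the same route as the paper's own proof: both approximate $F$ by the $\LP$ functions $F_{m,m}$ of Lemma \ref{Fnm}, choose $a_m$ to be the root of $H(it)=-m$ near the positive real axis (so $\Im a_m=O(1)$), split the contour into a long real segment and two short vertical pieces, use Lemma \ref{jensen} to obtain the key integrable majorant $|1+H(it)/m|^m\le e^{\Re H(it)/2}\le e^{-ct^{2d}/4}$, and conclude by Hurwitz's theorem. The one place where your phrasing is imprecise is the claim that the vertical corrections $E_m$ are controlled by the decay of $|e^{H(it)}|$: the integrand on the vertical pieces is $(1+H(it)/m)^m$, not $e^{H(it)}$, so that estimate does not apply directly; instead one should either observe (as the paper does via the Taylor expansion of $H(it)+m$ about $t=a_m$) that $H(it)+m=O(m^{(2d-1)/2d})$ there, giving $(1+H(it)/m)^m=O(m^{-m/2d})$ times a constant, or note that Lemma \ref{jensen} also applies on the vertical pieces since $\Re H(it)\approx -m$ and $|\Im H(it)/\Re H(it)|=O(m^{-1/2d})$ there. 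Either fix is routine, so the argument is sound; the only structural difference from the paper is your direct use of dominated convergence in place of the paper's combination of pointwise convergence, locally uniform boundedness, and the normal-families result cited from \cite{S}.
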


\begin{proof}
Let $G$ be a real polynomial as described in the theorem, suppose that $c=1$, and $G(ia_m)=-m=\overline{G(ia_m)}=G(-i\bar{a}_m)$, and $a_m=t_{2d}$ as described in \eqref{unity-roots}. Expanding around $t=a_m$, 
\begin{align*}
G(it)+m &= 0 + iG'(ia_m)(t-a_m) + \cdots \\
& \qquad \cdots + i^{2d-1}\frac{G^{(2d-1)}(ia_m)}{(2d-1)!}(t-a_m)^{2d-1} - (t-a_m)^{2d}
\end{align*}

Let $a_m=x_m+iy_m$, where $x_m,y_m\in\RR$.  By \eqref{unity-roots}, $|t-a_m|=O(1)$ as $m\to\infty$ for values of $t$ in the line segment $L_m=\{x_m+is : 0\le s\le y_m\}$, and using $G^{(k)}(ia_m)=O((a_m)^{2d-k})=O(m^{(2d-k)/2d})$ one obtains
\begin{equation}\label{G-estimate}
G(it)+m = O(m^{\frac{2d-1}{2d}}),\qquad (m\to\infty),
\end{equation}
for $t\in L_m$, and similarly for $t\in-L^*_m$. 
Then, 
\[
F_{n,m}(z) = \int_{-L^*_m} I_{n,m}(t,z) dt +  \int_{-x_m}^{x_m} I_{n,m}(t,z) dt + \int_{L_m} I_{n,m}(t,z) dt,
\]
where $I_{n,m}(t,z) = (1+G(it)/m)^ne^{itz}$.

Choosing $n=m$ and using the estimates \eqref{G-estimate} and \eqref{unity-roots} yields 
\[
(1+G(it)/m)^m = (Cm^{-\frac{1}{2d}}+O(m^{-\frac{2}{2d}}))^m = O(C^mm^{-\frac{m}{2d}})
\]
 for $t\in L_m\cup(-L^*_m)$, some $C\in\CC$.  Therefore,
\begin{equation} \label{Fmm-estimate}
F_{m,m}(z) = \int_{-x_m}^{x_m} (1+G(it)/m)^me^{itz}dt + O(C^mm^{-\frac{m}{2d}}), \qquad (m\to\infty).
\end{equation}
Let $A$ be the constant in Lemma \ref{jensen}.  Since the interval where \eqref{exp-jensen-estimate} holds grows more quickly with $n=m$ than $|G(ix_m)|$ by \eqref{G-estimate}, there is an $M\in\NN$ such that that $Am> \Re~G(ix_m)$ for all $m\ge M$.
By Lemma \ref{exp-jensen-estimate}, 
\begin{equation}
\left|\int_{-x_m}^{-R} (1+G(it)/m)^me^{itz}dt\right| \le \int_{-\infty}^{-R} e^{\Re\;G(it)}e^{-t\:\Im z}dt, \text{ and} \label{tail-bound1}\\
\end{equation}
for real $R$ sufficiently large that $|\Im\;G(it)/\Re\;G(it)|<1/2$ for all $|t|>R$, and for $m>M$ such that $x_m>R$.  Inequality \eqref{tail-bound1} holds {\em mutatis mutandis} when the limits of integration on the left-hand side are changed to $R$ and $x_m$, and on the right to $R$ and $\infty$, repectively.  The integral on the right-hand side of \eqref{tail-bound1} converges and provides a uniform bound over compact subsets of $\CC$ for the integral on the left hand side (likewise for the integral on $[R,x_m]$).  Furthermore, by choosing $R$ sufficiently large for fixed $z\in\CC$, the integral on the right hand side of \eqref{tail-bound1} (and also for the integral on $[R,\infty]$) can be made arbitrarily small, hence $F_{m,m}\to F$ pointwise.  Since $(1+G(it)/m)^m$ converges uniformly on $[-R,R]$ as $m\to\infty$, 
\[
\int_{-R}^{R} (1+G(it)/m)^me^{itz}dt
\]
is locally uniformly bounded on $\CC$.  With \eqref{Fmm-estimate} and \eqref{tail-bound1}, we may conclude that the sequence $F_{m,m}$ is locally uniformly bounded and therefore $F_{m,m}\to F$ uniformly on compact sets as $m\to\infty$ \cite[p. 34]{S}. 
  Since each $F_{m,m}\in\LP$ by Lemma \ref{Fnm}, by Hurwitz's Theorem $F(z)\in\LP$. 
\end{proof}

\begin{corollary}\label{fourier-polynomial}
Let $F(z)$ be an entire function possessing the representation \eqref{f-trans}
where $\Re~G(it)\to -\infty$ as $t\to \pm\infty$, and $G'(x)\in\RR[z]\cap\LP$. Then $F(z)\in\LP$.
\end{corollary}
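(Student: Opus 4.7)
The plan is to reduce to Theorem~\ref{fourier-polynomial-even} via a small perturbation of $H$. The only case not already covered by Theorem~\ref{fourier-polynomial-even} is the odd-degree case: if $\deg H=2d$, then $\Re H(it)=\sum_j a_{2j}(-1)^{j}t^{2j}$, and the hypothesis $\Re H(it)\to-\infty$ forces the leading even coefficient to be negative, so $H(it)=-ct^{2d}+\cdots$ with $c>0$ and the theorem applies directly. Also $\deg H\le 1$ is excluded (the real part would be constant). So it remains to handle $\deg H=2d+1$ with $d\ge 1$, which I will approximate by even-degree polynomials satisfying the hypotheses of Theorem~\ref{fourier-polynomial-even}.

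The key construction is a \emph{multiplicative} perturbation of $H'$, chosen so that membership in $\LP$ is automatic. For small real $\beta$, set
\[
H_\beta'(x):=H'(x)(1+\beta x), \qquad H_\beta(0):=H(0).
\]
Then $H_\beta$ is a real polynomial of degree $2d+2$, and $H_\beta'\in\LP$ since $\LP$ is closed under multiplication and $1+\beta x\in\LP$. A direct computation shows that the leading coefficient of $H_\beta(it)$, viewed as a polynomial in $t$, is $\beta a_{2d+1}\tfrac{2d+1}{2d+2}(-1)^{d+1}$; by taking the sign of $\beta$ opposite to that of $a_{2d+1}(-1)^{d+1}$, this coefficient becomes $-c_\beta$ with $c_\beta>0$. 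Hence $H_\beta$ satisfies the hypotheses of Theorem~\ref{fourier-polynomial-even}, giving
\[
F_\beta(z):=\int_{-\infty}^{\infty}e^{H_\beta(it)}e^{itz}\,dt\in\LP
\]
for each such $\beta$.

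The last step is to show $F_\beta\to F$ locally uniformly in $z$ as $\beta\to 0$ through values of the chosen sign, after which Hurwitz's theorem yields $F\in\LP$. The crucial tail estimate is that $\Re H_\beta(it)-\Re H(it)=\beta Q(t)$, where $Q$ is a fixed real polynomial whose leading coefficient has been arranged so that $\beta Q(t)\to-\infty$ as $|t|\to\infty$. Thus $\beta Q(t)$ is uniformly bounded above on $\RR$ by some $M_\beta$ with $M_\beta\to 0$, and
\[
|e^{H_\beta(it)}e^{itz}|\le e^{M_\beta}e^{\Re H(it)+|z||t|},
\]
which is integrable uniformly in $\beta$ and uniform for $z$ in any compact set, since $\Re H(it)$ decays at least like $-c|t|^{2j_{\max}}$ with $j_{\max}\ge 1$. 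Dominated convergence then delivers the required locally uniform convergence. The main obstacle I anticipate is the design of the perturbation itself: the naive additive perturbation $H+\varepsilon z^{n+1}$ generally breaks $\LP$-ness of $H'$ when $H'$ has multiple zeros of the ``wrong'' type, and the multiplicative trick on $H'$ is precisely the clean way to preserve $\LP$ while keeping the integrand uniformly dominated.
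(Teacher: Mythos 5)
Your proof is correct, and it takes a genuinely different route from the paper's. For the odd-degree case the paper uses \emph{two} perturbations followed by a double Hurwitz limit: first the coefficients of $H$ are perturbed to an $H_\varepsilon$ for which $H_\varepsilon'$ has only simple real zeros, and then the monomial $-\delta t^{2k}$ is inserted additively into the exponent so that the new exponent $\widetilde{H}$ has even degree with $\widetilde{H}(it)=-\delta t^{2k}+\cdots$; the limits $\delta\to 0$ and then $\varepsilon\to 0$ are taken in succession. The $\varepsilon$-step is needed precisely because adding $-2k\delta(-1)^k x^{2k-1}$ to $H'$ can destroy real-rootedness when $H'$ has multiple zeros, a point the paper leaves implicit in the phrase ``$\delta$ sufficiently small.'' Your single multiplicative perturbation $H_\beta' = H'\cdot(1+\beta x)$ sidesteps this entirely: $\LP$-membership of $H_\beta'$ is automatic because $\LP$ is closed under multiplication, the degree of $H_\beta$ becomes even, and the choice of the sign of $\beta$ guarantees the correct sign of the leading coefficient of $H_\beta(it)$. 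Your observation that $\Re H_\beta(it)=\Re H(it)+\beta Q(t)$ with $\sup_{t\in\RR}\beta Q(t)=M_\beta\to 0$ gives exactly the uniform domination needed for the locally uniform convergence $F_\beta\to F$ feeding into Hurwitz's theorem, with only a single limit. The net effect is a shorter argument with $\LP$-preservation built in and no hidden ``small additive perturbation preserves simple real roots'' step, so your version is cleaner than the one in the paper.
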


\begin{proof}
Given any $G(t)\in\RR[t]$ with $\Re~G(it)\to -\infty$ as $t\to \pm\infty$ the integral \eqref{f-trans} converges.  If the leading term of $G$ has even degree the conclusion holds by \ref{fourier-polynomial-even}.  Suppose the leading term of $G$ has odd degree $2k-1$, $k\in\NN$.  We can perturb the coefficients of $G$ such that $G'$ has only simple real zeros, separated by some distance $\varepsilon>0$; call this perturbation $G_\varepsilon$.   The convergence of the integral in \eqref{f-trans} and the fact that $G_\varepsilon$ is a polynomial, imply that there exist $B,C>0$ such that $|e^{G_\varepsilon(it)}|<|Ce^{-Bt^2}|$ for all $t\in\RR$, and $0\le \varepsilon \le\varepsilon_0$, provided $\varepsilon_0$ is sufficiently small.  Then for $\delta,\varepsilon>0$, with $\delta$ sufficiently small and $\varepsilon\le\varepsilon_0$, 
\[
F_{\varepsilon,\delta}(z) = \int_{-\infty}^\infty e^{-\delta t^{2k} + G_\varepsilon(it)}e^{itz}~dt \in \LP 
\] 
by Theorem \ref{fourier-polynomial-even}.
Local uniform boundedness, given by
$$|F_{\varepsilon, \delta}(z)|\le  \int_{-\infty}^\infty |e^{G_\varepsilon(it)}|e^{-t\; \Im~z}~dt \le \int_{-\infty}^\infty |Ce^{-Bt^2}|e^{-t\; \Im~z}~dt,$$
 and pointwise convergence to $F_{\varepsilon,0}$ as $\delta\to 0$, imply that $F_{\varepsilon,\delta}(z) \to F_{\varepsilon,0}(z)$ locally uniformly for $0<\varepsilon\le\varepsilon_0$.  By Hurwitz's theorem, $F_{\varepsilon,0}(z)\in\LP$ for $0<\varepsilon\le\varepsilon_0$.  Letting $\varepsilon\to 0$ and applying Hurwitz's theorem again yields $F_{0,0}(z)=F(z)\in\LP$.
\end{proof}

We now extend a Theroem of Ilieff and de~Bruijn, who proved that for any even entire function $G$ with in $G'\in\LP$, the Fourier transform $F$ in \eqref{f-trans} has only real zeros.  Proposition \ref{fourier-cor} below removes the requirement to be even, and requires only that the coefficients in the real part of $G(it)$ are eventually negative.

\begin{proposition}\label{fourier-cor}
Suppose $G(t)=\sum_{k=0}^\infty \gamma_k x^k/k!$ is a real transcendental entire function, $G'\in\LP$, and there exists a $K\in\NN$ such that $(-1)^K\gamma_{2K}< 0$ and $(-1)^k\gamma_{2k}\le 0$ for $k>K$.  Then
\[
F(z)=\int_{-\infty}^\infty e^{G(it)}e^{izt}dt \in \LP.
\]
\end{proposition}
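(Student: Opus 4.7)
The plan is to reduce the transcendental case to Corollary \ref{fourier-polynomial} by constructing real polynomial approximations $P_n$ of $H$ that simultaneously preserve $P_n'\in\LP$ and the asymptotic $\Re P_n(it)\to -\infty$, apply the corollary to conclude $F_n\in\LP$, and then pass to the limit by dominated convergence and Hurwitz's theorem.

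\textbf{Polynomial approximation via Jensen polynomials.} Since $\psi:=H'$ lies in $\LP$, its Jensen polynomials $g_n(t):=\sum_{k=0}^n\binom{n}{k}\gamma_{k+1}t^k$ belong to $\RR[z]\cap\LP$ by the classical Jensen--P\'olya theorem. Setting $p_n(t):=g_n(t/n)$, each $p_n\in\RR[z]\cap\LP$ (scaling preserves $\LP$), and $p_n\to\psi$ locally uniformly on $\CC$ because $\binom{n}{k}/n^k\to 1/k!$. Define $P_n(t):=\gamma_0+\int_0^t p_n(s)\,ds$, so $P_n\in\RR[z]$, $P_n'=p_n\in\LP$, and $P_n\to H$ locally uniformly. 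A direct expansion gives
\[
P_n(t)=\gamma_0+\sum_{j=1}^{n+1}\frac{\gamma_j}{j!}\,e_{j,n}\,t^j,\qquad e_{j,n}:=\prod_{i=0}^{j-2}\Bigl(1-\frac{i}{n}\Bigr)\in(0,1],
\]
with $e_{j,n}\to 1$ for every fixed $j$.

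\textbf{Sign preservation and a uniform envelope.} Taking real parts,
\[
\Re P_n(it)=\gamma_0+\sum_{k\ge 1}(-1)^k\gamma_{2k}\,\frac{e_{2k,n}}{(2k)!}\,t^{2k}.
\]
By the hypothesis, $(-1)^k\gamma_{2k}\le 0$ for $k\ge K$ with strict inequality at $k=K$, while $e_{j,n}\in(0,1]$. For all $n$ large enough that $2K\le n+1$ and $e_{2K,n}\ge 1/2$, crudely estimating the contributions with $k<K$ by their absolute values and dropping the non-positive contributions with $k>K$ yields the $n$-uniform upper bound
\[
\Re P_n(it)\;\le\;Q(t)\;:=\;\gamma_0+\sum_{k=1}^{K-1}\frac{|\gamma_{2k}|}{(2k)!}\,t^{2k}+\frac{(-1)^K\gamma_{2K}}{2\,(2K)!}\,t^{2K},
\]
where $Q$ is a fixed real polynomial of degree $2K$ whose leading coefficient is strictly negative, so $Q(t)\to -\infty$ as $|t|\to\infty$. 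In particular $\Re P_n(it)\to -\infty$ for each such $n$, and Corollary \ref{fourier-polynomial} applies to $P_n$, giving $F_n(z):=\int_{-\infty}^\infty e^{P_n(it)}e^{itz}\,dt\in\LP$.

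\textbf{Passage to the limit and main obstacle.} For $z$ in any compact subset of $\CC$, $\bigl|e^{P_n(it)+itz}\bigr|\le e^{Q(t)+t|\Im z|}$, which is integrable in $t$ with the bound independent of $n$ (for $n$ large). Together with the pointwise convergence $P_n(it)\to H(it)$, dominated convergence gives $F_n\to F$ uniformly on compact subsets of $\CC$. Since each $F_n\in\LP$ and $\LP=\PPP(H\cup H^*)$ is closed under locally uniform limits, Hurwitz's theorem yields $F\in\LP$. The genuine difficulty is concentrated in the previous step: one must simultaneously preserve $p_n\in\LP$, maintain $P_n\to H$, and arrange that the sign structure powering the hypothesis survives in each $\Re P_n(it)$ strongly enough to extract an $n$-independent integrable dominator. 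The Jensen polynomial scheme is tailored precisely for this, since the combinatorial weights $e_{j,n}$ are positive and uniformly bounded, so the strict negativity of $(-1)^K\gamma_{2K}$ passes unweakened to the dominant $t^{2K}$ term of every $\Re P_n(it)$ with $n\ge 2K$.
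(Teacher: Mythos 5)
Your argument matches the paper's almost exactly: both approximate $H$ by scaled Jensen polynomials (you integrate the Jensen polynomials of $H'$ to get $P_n$, while the paper takes Jensen polynomials of $H$ itself and differentiates to check the derivative is in $\LP$), both extract an $n$-uniform integrable envelope whose leading $t^{2K}$ coefficient is strictly negative, both invoke the polynomial case (Corollary \ref{fourier-polynomial}), and both close with Hurwitz's theorem. One small wording point: dominated convergence by itself gives only pointwise convergence of the $F_n$; the locally uniform convergence needed for Hurwitz follows from the $n$-uniform local bound plus normality of locally bounded holomorphic families (which is precisely how the paper justifies it), and your envelope supplies exactly that bound.
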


\begin{proof}
Let $g_n(t)=\sum_{k=0}^n\binom{n}{k}\gamma_k (t/n)^k$ and define
\[
M(t) = \sum_{k=0}^{K-1} \frac{|\gamma_{2k}|}{(2k)!}t^{2k} + (-1)^K\gamma_{2K}\left(\frac{t}{2K}\right)^{2K}.
\]
If $G'(t)\in\LP$, then $g'_n(t)=\sum_{k=0}^{n-1}\binom{n-1}{k}\gamma_{k+1} (t/n)^k\in\LP$ for each $n\in\NN$ \cite{CC}.
By virtue of the inequality $\binom{n}{k}/n^k \le 1/k!$, 
\[
|e^{g_n(it)}|\le e^{M(t)} \text{ for all } t\in\RR, n\in\NN,
\]
and since the leading coefficient of $M(t)$ is negative and of degree at least two, $e^{M(t)} \le e^{-\varepsilon t^2}$ for a fixed $\varepsilon>0$ and $|t|>R_\varepsilon$ sufficiently large.  Thus,
\[
\left|\int_{-\infty}^\infty e^{g_n(it)}e^{izt} dt\right| < \int_{-R_\varepsilon}^{R_\varepsilon} e^{M(t)}e^{-t\:\Im z} dt + \int_{-\infty}^{-R_\varepsilon} e^{-\varepsilon t^2} dt + \int_{R_\varepsilon}^\infty  e^{-\varepsilon t^2} dt,
\]
and the left hand side is uniformly bounded on compact subsets of $\CC$.  Thus,
\[
\int_{-\infty}^\infty e^{g_n(it)}e^{izt} dt \to F(z)
\]
locally uniformly on $\CC$ as $n\to\infty$ \cite[p. 34]{S}, whence $F(z)\in\LP$ by Theorem \ref{fourier-polynomial} and Hurwitz's Theorem.
\end{proof}

\begin{remark}
Let $\phi(t)=\sum_{k=0}^\infty a_k t^k$ be a real entire function.  Then for any real polynomial $p(t)=\sum_{k=0}^N b_kt^k$, the $k$-th coefficient of $p(t)\phi(t)=\sum_{k=0}^\infty c_k t^k$, is 
\[
c_k = b_0a_k + b_1a_{k-1} + \cdots + b_Na_{k-N} \qquad (k\ge N).
\]  
Since $\phi$ is entire, $\limsup_{k\to\infty} |a_{k+1}/a_k|= 0$, and consequently for $k$ sufficiently large, ${\rm sgn}(c_k) = {\rm sgn}(b_Na_{k-N})$.
Thus for $p\in\RR[z]$, such that $\Re~p(it)$, $t\in\RR$, has positive leading coefficient, and $G$ as described in Proposition \ref{fourier-cor} with $G'\in\LP$ replaced by $(pG)'\in\LP$, the function $G_2(t)=p(t)G(t)$ again satisfies the hypotheses for $G$ in Proposition \ref{fourier-cor}.
\end{remark}

\section{Closing remarks}

Below we identify several avenues one might explore as a continuation of the work here.  As mentioned earlier, D.~Cardon has shown that an operator with property \eqref{su1} of strong universal factors must have order at least one and either mean or maximal type \cite{cardon}.  With this in mind, we list some directions for further investigation.
\begin{enumerate}\itemsep 2pt
\item
Obtain a characterization of real strip preservers which either uses properties of the operator symbol, or some other readily verifiable criteria. 
\item
Determine if operators arising from Theorem \ref{stab-strip} which possess property \eqref{su1} of a strong universal factors also satisfy property \eqref{su2}.
\item 
Given $f\in\LP$ of order one and mean type, determine a partner $g\in\LP$ such that $(f+ig)(D)$ has the form of $T$ in Theorem \ref{stab-strip} (note the zeros of $f$ and $g$ must interlace).  Do all operators with order strictly less than two, and property \eqref{su1} of strong universal factors, have this form?
\item
Prove the following conjecture,
\begin{conjecture}
Suppose $G(t)=\sum_{k=0}^\infty \gamma_k x^k/k!$ is a real transcendental entire function, $G'\in\LP$, and $G(it)\to -\infty$ as $t\to\pm\infty$.  Then
\[
F(z)=\int_{-\infty}^\infty e^{G(it)}e^{izt}dt \in \LP.
\]
\end{conjecture}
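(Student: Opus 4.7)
The plan is to adapt the proof of Theorem \ref{fourier-polynomial-even} from the polynomial case to transcendental $H$. For each sufficiently large positive integer $m$, select $a_m=x_m+iy_m\in\CC$ with $H(ia_m)=-m$ (such a solution exists by continuity and the hypothesis $\Re H(it)\to-\infty$), and define
$$
F_{n,m}(z):=\int_{-\bar a_m}^{a_m}(1+H(it)/m)^n e^{itz}\,dt.
$$
As in the proof of Lemma \ref{Fnm}, the vanishing of the boundary terms $(1+H(ia_m)/m)=(1+H(-i\bar a_m)/m)=0$ permits integration by parts to yield $F_{n,m}(z)=-(n/(mz))H'(D)F_{n-1,m}(z)$. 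Now each $F_{n,m}$ is a real entire function of exponential type (and hence of genus at most one); since $H'\in\LP\subseteq\PPP(H)$, Theorem \ref{inf-diff-op} applies to give $H'(D)F_{n-1,m}\in\PPP(H)\cap\RR[[z]]=\LP$. The forced vanishing of $H'(D)F_{n-1,m}$ at $z=0$ (which follows by the same integration-by-parts calculation applied to $F_{n,m}(0)$) cancels the factor $1/z$, so by induction from the base case $F_{0,m}\in\LP$ one obtains $F_{n,m}\in\LP$ for every $n,m$.

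By Hurwitz's theorem it then suffices to prove $F_{m,m}\to F$ locally uniformly on $\CC$ as $m\to\infty$. This convergence requires three ingredients in parallel with the polynomial proof: quantitative control of $|a_m|$, a transcendental analogue of the bound $|(1+H(it)/m)^m|\le e^{\Re H(it)/2}$ from Lemma \ref{jensen} valid in the region where $|\Im H(it)/\Re H(it)|<1/2$, and a uniform $L^1$ majorant for the tails so that dominated convergence transports $(1+H(it)/m)^m\to e^{H(it)}$ through the integral.

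The principal obstacle is producing these quantitative estimates in the transcendental setting. In the polynomial case, \eqref{unity-roots} gives the root-of-unity asymptotics $a_m\sim e^{i\pi k/d}(m/c)^{1/2d}$ and \eqref{H-estimate} supplies the Taylor expansion of $H$ near $a_m$, both genuinely polynomial in nature. For transcendental $H$ with $H'\in\LP$, the equation $H(ia)=-m$ has infinitely many solutions; a suitable selection $a_m$ and its growth must be extracted from the Hadamard factorization of $H'$ together with the hypothesis $\Re H(it)\to-\infty$, and the local expansion of $H$ around $a_m$ requires a saddle-point analysis that accounts for the full zero set of $H'$. Assembling these into a uniform $L^1$ dominator on the real line, under nothing but the assumption $\Re H(it)\to-\infty$, is the crux of the argument; once in place, the induction giving $F_{n,m}\in\LP$ and the final Hurwitz argument carry over from the polynomial proof without essential change.
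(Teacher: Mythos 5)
This statement is an \emph{open conjecture} in the paper, not a proved theorem: it appears in the Closing remarks as item (4) in a list of problems for further investigation, and the authors give no proof. There is therefore no proof in the paper to compare your attempt against. The relevant question is whether your sketch actually settles the conjecture, and it does not --- you yourself flag the step that is missing, and that step is genuinely the whole difficulty.

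Your reduction is sound as far as it goes. The extension of Lemma \ref{Fnm} to transcendental $H$ works: the integration-by-parts identity $zF_{n,m}(z)=-(n/m)H'(D)F_{n-1,m}(z)$ holds provided $H(ia_m)=-m=H(-i\bar a_m)$, and since $F_{n-1,m}$ is an entire function of exponential type and $H'\in\LP\subset\PPP(H)$, Theorem \ref{inf-diff-op} applies (with $\gamma_2=0$) to make $H'(D)F_{n-1,m}$ a legitimate real entire function in $\LP$; the vanishing at $z=0$ needed to absorb the $1/z$ is forced by the identity itself because $F_{n,m}$ is entire by construction. With a contour symmetric under $t\mapsto -\bar t$, the reality of $F_{n,m}$ follows from $\overline{H(w)}=H(\bar w)$, so the induction giving $F_{n,m}\in\LP$ carries over essentially verbatim. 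What does not carry over is everything needed for $F_{m,m}\to F$ locally uniformly. In the polynomial case of Theorem \ref{fourier-polynomial-even}, the equation $H(it)=-m$ is an algebraic equation of fixed degree $2d$, so \eqref{unity-roots} gives $a_m\sim (m/c)^{1/2d}$ and \eqref{H-estimate} gives $H(it)+m=O(m^{(2d-1)/2d})$ on the vertical legs $L_m$; both estimates exploit that every derivative of $H$ at $ia_m$ is again a polynomial of known degree. For transcendental $H$ with $H'\in\LP$, the solution set of $H(ia)=-m$ is infinite, a correct selection of $a_m$ is not canonical, and the hypothesis $\Re~H(it)\to-\infty$ provides no rate, no control of $\Im~H(it)/\Re~H(it)$ (which governs where Lemma \ref{jensen} is usable), and no a priori $L^1$ majorant. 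The paper's Proposition \ref{fourier-cor} bypasses exactly this by adding the sign hypothesis $(-1)^k\gamma_{2k}\le 0$ eventually, which manufactures the majorant $e^{M(t)}$; dropping that hypothesis --- which is what the conjecture does --- is precisely what makes the problem open. So your proposal is an honest program, not a proof, and the conjecture remains unresolved.
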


\end{enumerate}

\newpage

\appendix
\section{Linear preservers and strip regions under M\"obius transformations} \label{appendix1}
Recall the general form of a M\"obius transformation, given by 
\begin{equation}\label{mobius}
\Phi(z) = \frac{az+b}{cz+d},\qquad a,b,c,d\in\CC,\qquad ad-bc\neq 0,
\end{equation}
with inverse $\Phi^{-1}(z) = (dz-b)/(-cz+a)$.
The images of a strip under M\"obius transformations, which are not of the form $z\to az+b$, take three basic geometries, which are depicted below.  
\begin{center}
\parbox{1 in}{
\begin{tikzpicture}[scale=1/3]
\draw[pattern=horizontal lines, pattern color=black] (0,1.5) circle (1.5);
\draw[pattern=vertical lines, pattern color=black] (0,-1) circle (1);
\draw[->] (-4,0)--(4,0);
\draw[->] (0,-3)--(0,4); 
\end{tikzpicture}
\begin{center}(i)\end{center}
}
\hspace{1 cm}
\parbox{1 in}{
\begin{tikzpicture}[scale=1/3]
\draw[pattern=horizontal lines, pattern color=black] (0,1) circle (1);
\draw[->](0,-3)--(0,4);
\draw[->](-4,0)--(4,0);
\clip (-4,-3) rectangle (4,0);
\draw[pattern=vertical lines, pattern color=black] (-4.1,-3.1) rectangle (4.1,0);
\end{tikzpicture}
\begin{center}(ii)\end{center}
}
\hspace{1 cm}
\parbox{1 in}{
\begin{tikzpicture}[scale=1/3]
\clip (-2,-3) rectangle (6,4);
\draw[pattern=vertical lines, pattern color=black] (-2.1,-3.1) rectangle (6.1,4.1);
\fill[white] (2,0) circle (3);
\draw (2,0) circle (3);
\draw[pattern=horizontal lines, pattern color=black] (0,0) circle (1);
\draw[->](0,-3)--(0,4);
\draw[->](-2,0)--(6,0);
\end{tikzpicture}
\begin{center}(iii)\end{center}
}
\end{center}

Let the horizontal lines and the vertical lines mark the images of the open half-plane above the strip $S_{\mu=1}$, and the open half-plane below $S_{\mu=1}$ (resp.), then the examples of images above are given by
\begin{enumerate}\renewcommand\theenumi{\roman{enumi}}
\item
$\Phi(z)=-1/z$ (the exterior of two tangent disks), 

\item
$\Phi(z)=-1/(z+i)$ (the exterior of a disk tangent to a plane),

\item 
$\Phi(z)=(4-iz)/(2+iz)$ (a disk with a tangent interior disk removed).

\end{enumerate}

The set of polynomials of exact degree $n$ whose zero set is some bounded region $\Omega\subset\CC$ is not dense in $\PP_n(\Omega^c)$ (see \cite[p. 473]{BB1}).  Thus a characterization for operators preserving stability on regions which include case (iii) can only be algebraic.  For the proofs of Corollary \ref{cor-mobius} and \ref{cor-mobius-n} below we require the following lemma.

\begin{lemma}[{\cite[Lemma 6]{BB1}, modified}]\label{lem-mobius-image}
Let $T:\CC_n[z]\to\CC_m[z]$ and let $m$ be minimal, i.e., $m=\max\{\deg(T(p)) : p\in\CC_n[z]\}$.  Let $C_j=\Phi^{-1}(H_j)$, for $j=1,2$, be open circular domains, where $\Phi$ is a M\"obius transformation as in \eqref{mobius} and $H_1$ and $H_2$ are parallel open half-planes as in Theorem \ref{strip}.  Let $S$ be the linear operator defined by $\phi^{-1}_m T\phi_n$, where $\phi_k(p)=(cz+d)^kp(\Phi(z))$.  The following are equivalent:
\renewcommand\theenumi{\roman{enumi}}
\begin{enumerate}
\item $T(p)$ is $C_1\cup C_2$-stable or zero whenever $p$ is of degree $n$ and $C_1\cup C_2$-stable.
\item $S(p)$ is $H_1\cup H_2$-stable or zero whenever $p$ is of degree $n$ and $H_1\cup H_2$-stable.
\item $S(p)$ is $H_1\cup H_2$-stable or zero whenever $p$ is of degree at most $n$ and $H_1\cup H_2$-stable.
\end{enumerate}
\end{lemma}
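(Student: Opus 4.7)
The equivalence (i)$\Leftrightarrow$(ii) is the original content of Lemma 6 of \cite{BB1}; the only genuinely new content is the equivalence (ii)$\Leftrightarrow$(iii). I would first recall the classical argument briefly, then focus on the density step needed to pass between the two stability conditions.

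For (i)$\Leftrightarrow$(ii), the idea is that $\phi_k$ is a linear automorphism of $\CC_k[z]$: a direct calculation shows that if $\psi_k$ is the analogous operator built from the Möbius inverse $\Phi^{-1}(z)=(dz-b)/(-cz+a)$, then $\psi_k\circ\phi_k=(ad-bc)^k\cdot\mathrm{Id}$. Moreover, for $p\in\CC_n[z]$ of exact degree $n$ with zeros $w_1,\ldots,w_n\neq a/c$,
\[
\phi_n(p)(z)=a_n\prod_{i=1}^n\bigl((a-cw_i)z-(dw_i-b)\bigr),
\]
so the zeros of $\phi_n(p)$ are precisely $\Phi^{-1}(w_i)\in\Phi^{-1}(H_1\cup H_2)=C_1\cup C_2$. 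Hence $\phi_n$ and $\phi_m$ transport $C_j$-stability on one side to $H_j$-stability on the other, which gives (i)$\Leftrightarrow$(ii) after composing with $T$ and $S=\phi_m^{-1}T\phi_n$.

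The implication (iii)$\Rightarrow$(ii) is immediate, since degree-$n$ polynomials form a subset of $\CC_n[z]$. For (ii)$\Rightarrow$(iii), the plan is an approximation argument. Given $p\in\CC_n[z]\cap\PP(H_1\cup H_2)$ of degree $d<n$, pick a point $z_0$ in the interior of the closed strip $\CC\setminus(H_1\cup H_2)$ lying on a horizontal line contained in that strip, and set
\[
p_\epsilon(z)=p(z)\bigl(1+\epsilon(z-z_0)\bigr)^{n-d}, \qquad \epsilon>0.
\]
Each $p_\epsilon$ has degree exactly $n$; its additional zero $z_0-1/\epsilon$ stays on the chosen horizontal line, hence inside the strip, so $p_\epsilon\in\PP(H_1\cup H_2)$. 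As $\epsilon\to 0^+$ we have $p_\epsilon\to p$ locally uniformly, and since $S$ is linear on a finite-dimensional space it is continuous, so $S(p_\epsilon)\to S(p)$ locally uniformly. Hypothesis (ii) puts each $S(p_\epsilon)$ in $\PP(H_1\cup H_2)\cup\{0\}$, and Hurwitz's theorem then forces the limit $S(p)$ into $\PP(H_1\cup H_2)\cup\{0\}$ as well.

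The main subtlety, and the reason for the hypothesis that $m=\max\{\deg T(p):p\in\CC_n[z]\}$, is the degree-dropping phenomenon in the Möbius correspondence: $\phi_k(p)$ loses one degree for every zero of $p$ at the image-of-infinity point $a/c=\Phi(\infty)$, so $\phi_m^{-1}$ must be applied to polynomials that genuinely sit in $\CC_m[z]$. The minimality of $m$ is exactly what guarantees that $S=\phi_m^{-1}\circ T\circ\phi_n$ is well-defined as a map between matching polynomial spaces. For the new implication (ii)$\Rightarrow$(iii), by contrast, no serious obstacle arises; the only delicate point is verifying that the added zeros of $p_\epsilon$ may be chosen in the strip, which is possible precisely because $\CC\setminus(H_1\cup H_2)$ has nonempty interior and is unbounded in the direction parallel to its bounding lines.
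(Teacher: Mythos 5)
Your outline of (ii)$\Leftrightarrow$(iii), via appending zeros $z_0-1/\epsilon$ along a line inside the strip and invoking Hurwitz, is correct and is precisely the genuinely new clause of the modified lemma. The algebraic identity $\psi_k\circ\phi_k=(ad-bc)^k\,\mathrm{Id}$ and the factorization of $\phi_n(p)$ are also right (note one small slip: if the zeros $w_i$ of $p$ lie \emph{outside} $H_1\cup H_2$, then the zeros $\Phi^{-1}(w_i)$ of $\phi_n(p)$ lie \emph{outside} $C_1\cup C_2$; as written your sentence asserts the opposite inclusion).

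The diagnosis of the degree-dropping subtlety, however, misses the actual difficulty that the paper explicitly flags. Well-definedness of $S=\phi_m^{-1}T\phi_n$ is never in doubt: $\phi_m$ is a bijection of $\CC_m[z]$ regardless of whether its argument has exact degree $m$, so $S$ is defined for all $p$. The real issue is a stability issue. From $\phi_m^{-1}(q)(z)=(ad-bc)^{-m}(a-cz)^m q(\Phi^{-1}(z))$ one sees that if $q=T(\phi_n(p))$ has degree $d<m$, then $S(p)$ carries the extra factor $(a-cz)^{m-d}$, i.e., a zero of order $m-d$ at $a/c=\Phi(\infty)$. This point lies in $H_1\cup H_2$ exactly when $\infty\in C_1\cup C_2$ on the Riemann sphere, i.e., when one of $C_1,C_2$ is the exterior of a circle — precisely the case singled out in the paper's remark following the lemma, where it writes $S(p)=(-cz+a)^{r(p)}S_0(p)$ with $a/c\in H_1\cup H_2$. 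In that case $C_1\cup C_2$-stability of $T(\phi_n(p))$ does \emph{not} by itself transport to $H_1\cup H_2$-stability of $S(p)$, and the minimality of $m$ alone does not forbid $r(p)>0$ for particular stable $p$. One must still argue, by a continuity/perturbation argument as in the proof of Lemma~6 of \cite{BB1}, that $r(p)=0$ for every stable $p$ with $T(\phi_n(p))\not\equiv 0$: e.g.\ the top coefficient of $T(\phi_n(\cdot))$ vanishes on a proper subspace, so one approximates $p$ by nearby stable polynomials $\tilde p$ with $\deg T(\phi_n(\tilde p))=m$, concludes $S(\tilde p)$ is $H_1\cup H_2$-stable, and passes to the limit by Hurwitz. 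Your write-up stops short of this step, replacing it with a claim about well-definedness that is not the obstruction.
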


The proof of Lemma \ref{lem-mobius-image} is nearly identical to that for \cite[Lemma 6]{BB1} and is therefore omitted.  The only scenario where the equivalence is not straight forward is when either $C_1$ or $C_2$ is the exterior of a circle, and thus $S(p)=(-cz+a)^{r(p)}S_0(p)$, where $S_0(p)$ is $H_1\cup H_2$-stable and $a/c\in H_1\cup H_2$.  In this case, the minimality of $m$ and a continuity argument (as in the proof of \cite[Lemma 6]{BB1}) establish that $r(p)=0$ for all $p\in\PP(H_1\cup H_2)$.

Lemma \ref{lem-mobius-image} with Theorem \ref{strip} yields a condition for preservers of polynomials of fixed degree $n$ with zeros which lie in a M\"obius image of the strip.

\begin{corollary}\label{cor-mobius}
Let $n\in\NN$, $T:\CC_n[z]\to\CC[z]$ be a linear operator.  Let $\Phi(z)$ be a M\"obius transformation, and $C_1,C_2\subset \CC$ open circular domains such that $\Phi^{-1}(\CC\setminus(C_1\cup C_2))$ is a closed strip in the complex plane.  Then $T:\PP_n(C_1\cup C_2)\setminus\PP_{n-1}(C_1\cup C_2)\to\PP(C_1\cup C_2)\cup\{0\}$ if and only if either 
\renewcommand\theenumi{\roman{enumi}}
\begin{enumerate}
\item
$T$ has range of dimension at most one and is of the form
\[
T[f]=\alpha(f)p \text{ for } f\in\CC_n[z]\setminus \CC_{n-1}[z],
\]
where $\alpha:\CC_n[z]\setminus \CC_{n-1}[z]\to\CC$ is a linear functional and $p\in\PP(C_1\cup C_2)$; or
\item
$T$ satisfies \textup{(}a\textup{)} and \textup{(}b\textup{)} below.
\begin{enumerate}  
\item $T : \PP_n(C_1)\setminus\PP_{n-1}(C_1) \rightarrow \PP(C_1)\cup\{0\}$\\
 or $T : \PP_n(C_2)\setminus\PP_{n-1}(C_2) \rightarrow \PP(C_1)\cup\{0\}$;
\item $T : \PP_n(C_1)\setminus\PP_{n-1}(C_1) \rightarrow \PP(C_2)\cup\{0\}$ \\
or $T : \PP_n(C_2)\setminus\PP_{n-1}(C_2) \rightarrow \PP(C_2)\cup\{0\}$.
\end{enumerate}
\end{enumerate}
Additionally, $T:\PP_n(\overline{C_1}\cup\overline{C_2})\setminus\PP_{n-1}(\overline{C_1}\cup \overline{C_2})\to\PP(\overline{C_1}\cup \overline{C_2})$ if and only if $T:\PP_n(C_1\cup C_2)\setminus\PP_{n-1}(C_1\cup C_2)\to\PP(C_1\cup C_2)$ and $T[z^k]\in\PP(\overline{C_1}\cup \overline{C_2})$, where $k = \min\{ m :T[z^m]\not\equiv 0\}.$
\end{corollary}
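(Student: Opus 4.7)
The plan is to derive the corollary from Theorem \ref{strip} (for the main equivalence) and from Melamud's Theorem \ref{open-set} (for the closed-set extension) by conjugating $T$ through the M\"obius transformation $\Phi$ so as to reduce the claim to statements already established on the strip.

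Let $H_1, H_2$ denote the two open half-planes whose closed complement equals the strip $\Phi^{-1}(\CC \setminus (C_1 \cup C_2))$, labelled so that $\Phi(H_j) = C_{\sigma(j)}$ for some permutation $\sigma$ of $\{1,2\}$. Set $\phi_k(p)(z) := (cz+d)^k p(\Phi(z))$, let $m := \max\{\deg T(p) : p \in \CC_n[z]\}$, and define $S := \phi_m^{-1} \circ T \circ \phi_n$. By Lemma \ref{lem-mobius-image}, the hypothesis on $T$ in the corollary is equivalent to $S : \PP_n(H_1 \cup H_2) \to \PP(H_1 \cup H_2) \cup \{0\}$. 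Applying Theorem \ref{strip} to $S$ produces either the rank-one alternative, in which case the choices $p := \phi_m(q)$ and $\alpha(f) := \beta(\phi_n^{-1}(f))$ (writing $S[\cdot] = \beta(\cdot)\, q$) witness clause (i) of the corollary, or conditions (a) and (b) on $S$ with respect to $H_1, H_2$, which translate via $\Phi(H_j) = C_{\sigma(j)}$ into the four subcases of clause (ii). The converse direction simply reverses these translations, invoking Lemma \ref{lem-mobius-image} in the opposite direction to transfer the half-plane stability back to $(C_1 \cup C_2)$-stability.

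For the closed-set extension, Melamud's Theorem \ref{open-set} applied to the open set $\Omega := C_1 \cup C_2$ (with closure $\overline{C_1} \cup \overline{C_2}$) gives exactly the stated side-condition $T[z^k] \in \PP(\overline{C_1} \cup \overline{C_2})$; the proof of that theorem adapts routinely to operators $T : \CC_n[z] \to \CC[z]$ with bounded input degree, since the perturbation argument in Hurwitz's theorem is local in degree. The main technical obstacle lies in the degree bookkeeping in case (iii), where both $C_1$ and $C_2$ are bounded and $\Phi(\infty) = a/c$ lies inside $C_1 \cup C_2$: a $(C_1 \cup C_2)$-stable polynomial of exact degree $n$ may, under $\phi_n^{-1}$, become a polynomial of lower degree, and lifting by $\phi_m$ can introduce or remove factors of $(-cz+a)$. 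The restriction to $\PP_n \setminus \PP_{n-1}$, the minimality of $m$, and the continuity argument used in the proof of \cite[Lemma 6]{BB1} together ensure that no spurious factor $(-cz+a)^r$ appears in the range of $T$ and that the dichotomies of Theorem \ref{strip} transfer cleanly through the M\"obius conjugation.
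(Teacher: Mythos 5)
Your proof is correct and follows essentially the same route as the paper's: conjugate $T$ to $S = \phi_m^{-1} T \phi_n$ via Lemma~\ref{lem-mobius-image}, reduce to the strip case and apply Theorem~\ref{strip} to $S$, translate the dichotomies back through $\Phi$, and obtain the closed-domain statement from Theorem~\ref{open-set}. One small inaccuracy in your discussion of the degree bookkeeping: the case in picture~(iii) is the one in which $\CC\setminus(C_1\cup C_2)$ is \emph{bounded}, which forces exactly one of $C_1,C_2$ to be the exterior of a circle (so one of them is unbounded, not both bounded as you wrote), and $\Phi(\infty)=a/c$ lies on the common boundary of $C_1$ and $C_2$, not in their interior. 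The paper's remark after Lemma~\ref{lem-mobius-image} identifies the delicate scenario precisely as ``either $C_1$ or $C_2$ is the exterior of a circle,'' in which case $S(p)=(-cz+a)^{r(p)}S_0(p)$ might a priori acquire spurious factors; your invocation of the minimality of $m$, the restriction to exact degree $n$, and the continuity argument from \cite[Lemma 6]{BB1} is the right way to rule that out, so the conclusion stands despite the mislabeled case.
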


\begin{proof}
Sufficiency of the conditions (a) and (b) is immediate, as is necessity of (a) when $T$ has a rank at most one.  To show necessity, we may equivalently show that $S$, as defined in Lemma \ref{lem-mobius-image}, preserves $H_1\cup H_2$-stability for polynomials of degree $n$ or less.  By Theorem \ref{strip}, the conditions necessary for $S$ to preserve $H_1\cup H_2$-stability correspond to those given when Lemma \ref{lem-mobius-image} part (ii) is applied to obtain the equivalent conditions for $T$.  The conditions to preserve $\overline{C_1}\cup\overline{C_2}$-stability for polynomials of precise degree $n$ now follow from Theorem \ref{open-set}.
\end{proof}

In Corollary \ref{cor-mobius}, let $\Phi$ have the form \eqref{mobius} with $\Phi^{-1}(\CC\setminus(C_1\cup C_2))=S_\mu$, and without loss of generality assume $\Phi^{-1}(C_1)$ is the open half-plane above $S_\mu$. One may then obtain the following explicit algebraic versions of the conditions in Corollary \ref{cor-mobius} for operators of rank one or more. 

\begin{enumerate}  
\item $T[((az+b)+(cz+d)(w-i\mu))^n]\in\PP(C_1\times H)$\\
or $T[((az+b)+(cz+d)(w+i\mu)^n]\in\PP(C_1\times H^*)$; 
\item  $T[((az+b)+(cz+d)(w-i\mu))^n]\in\PP(C_2\times H)$\\
 or $T[((az+b)+(cz+d)(w+i\mu)^n]\in\PP(C_2\times H^*)$. 
\end{enumerate}

\noindent
The algebraic characterization of preservers of polynomials with zeros in M\"obius images of the strip now follows from Corollary \ref{cor-mobius}, which is used to verify the conditions (a) and (b) in case (ii).
\begin{corollary}\label{cor-mobius-n}
Let $n\in\NN$, $T:\CC_n[z]\to\CC[z]$ be a linear operator.  Let $\Phi(z)$ be a M\"obius transformation, and $C_1,C_2\subset \CC$ open circular domains such that $\Phi^{-1}(\CC\setminus(C_1\cup C_2))$ is a closed strip in the complex plane.  Then $T:\PP_n(C_1\cup C_2)\to\PP(C_1\cup C_2)\cup\{0\}$ if and only if either
\renewcommand\theenumi{\roman{enumi}}
\begin{enumerate}
\item
$T$ has range of dimension at most one and is of the form
\[
T[f]=\alpha(f)p \text{ for } f\in\CC_n[z]
\]
where $\alpha:\CC_n[z]\to\CC$ is a linear functional and $p\in\PP(C_1\cup C_2)$; or
\item
$T$ satisfies \textup{(}a\textup{)} and \textup{(}b\textup{)} below for each $m\le n$.
 \begin{enumerate}  
\item $T : \PP_m(C_1) \rightarrow \PP(C_1)\cup\{0\}$
 or $T : \PP_m(C_2) \rightarrow \PP(C_1)\cup\{0\}$;
\item $T : \PP_m(C_1) \rightarrow \PP(C_2)\cup\{0\}$ 
or $T : \PP_m(C_2) \rightarrow \PP(C_2)\cup\{0\}$.
\end{enumerate}
\end{enumerate}
Additionally, $T:\PP_n(\overline{C_1}\cup\overline{C_2})\to\PP(\overline{C_1}\cup \overline{C_2})$ if and only if $T:\PP_n(C_1\cup C_2)\to\PP(C_1\cup C_2)$ and $T[z^k]\in\PP(\overline{C_1}\cup \overline{C_2})$, where $k = \min\{ m :T[z^m]\not\equiv 0\}.$
\end{corollary}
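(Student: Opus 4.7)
The strategy is to reduce the corollary to the already-established strip case via conjugation by the M\"obius transformation $\Phi$, apply Theorem \ref{strip}, and then invoke Theorem \ref{open-set} for the closed-region statement.

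First I would observe that (i) is handled trivially by the same argument used in Theorem \ref{strip}, so assume $T$ has rank at least two. Let $\phi_k : \CC_k[z] \to \CC_k[z]$ be the invertible linear map $\phi_k(p)(z) = (cz+d)^k p(\Phi(z))$ coming from $\Phi(z)=(az+b)/(cz+d)$, and set $S := \phi_m^{-1} T \phi_n$ where $m$ is minimal as in Lemma \ref{lem-mobius-image}. Then $\phi_n$ and $\phi_m^{-1}$ realize bijections between $\PP_k$ on the M\"obius-image regions and $\PP_k$ on the corresponding half-planes (with a caveat when one of $C_1,C_2$ is the exterior of a disk, handled as in the remarks after Lemma \ref{lem-mobius-image}). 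Hence $T : \PP_n(C_1 \cup C_2) \to \PP(C_1 \cup C_2) \cup \{0\}$ if and only if $S : \PP_n(H_1 \cup H_2) \to \PP(H_1 \cup H_2) \cup \{0\}$, where $H_1 \cup H_2$ is the complement of the closed strip $\Phi^{-1}(\CC \setminus (C_1 \cup C_2))$.

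Next I would apply Theorem \ref{strip} to $S$. Since $S$ has rank at least two (the bijections $\phi_k$ preserve rank), Theorem \ref{strip} part (ii) yields conditions (a) and (b) for $S$ in terms of the half-planes $H_1$, $H_2$, holding on all of $\PP_n(H_j)$. Note that $\PP_n(H_j)$ already includes polynomials of every degree $m \leq n$, so "for each $m \leq n$" in the statement is automatic once we have the condition at degree $n$. Pushing these conditions back through $\phi_k^{-1}$ translates each alternative "$S : \PP_n(H_i) \to \PP(H_j) \cup \{0\}$" into the corresponding "$T : \PP_n(C_i) \to \PP(C_j) \cup \{0\}$", establishing the equivalence stated in (ii). Sufficiency runs in reverse by the same transfer, since conditions (a) and (b) together force $T(\PP_n(C_1 \cup C_2)) \subseteq \PP(C_1 \cup C_2) \cup \{0\}$ via the same four-case analysis used in the sufficiency direction of Theorem \ref{strip}.

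Finally, for the closed-region statement, I would appeal directly to Theorem \ref{open-set} of Melamud, with $\Omega = C_1 \cup C_2$ an open set. Once $T$ is known to preserve $\PP_n(C_1 \cup C_2)$, the additional condition $T[z^k] \in \PP(\overline{C_1} \cup \overline{C_2})$ for $k = \min\{m : T[z^m] \not\equiv 0\}$ is exactly Melamud's criterion guaranteeing that $T$ pushes into the closed-region stability class. The main potential obstacle is the case where one of $C_1, C_2$ is the exterior of a disk and the pull-back operator $S$ could a priori acquire an extra factor of $(-cz+a)^{r(p)}$; as noted after Lemma \ref{lem-mobius-image}, minimality of $m$ together with a continuity argument forces $r(p)=0$ for all $p \in \PP(H_1 \cup H_2)$, so the transfer goes through cleanly.
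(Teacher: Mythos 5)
Your reduction hinges on the claim that ``$T : \PP_n(C_1 \cup C_2) \to \PP(C_1 \cup C_2) \cup \{0\}$ if and only if $S : \PP_n(H_1 \cup H_2) \to \PP(H_1 \cup H_2) \cup \{0\}$,'' justified by saying $\phi_n$ and $\phi_m^{-1}$ ``realize bijections between $\PP_k$ on the M\"obius-image regions and $\PP_k$ on the corresponding half-planes.'' This bijection claim fails precisely in the case the paper flags as delicate, namely when one of $C_1, C_2$ is the exterior of a disk (the ``lens'' picture (iii)). In that case $a/c=\Phi(\infty)\in H_1\cup H_2$, so any $p$ that is $H_1\cup H_2$-stable has $p(a/c)\ne 0$, which forces $\phi_n(p)$ to have degree exactly $n$. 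Thus $\phi_n$ carries $\PP_n(H_1\cup H_2)$ bijectively onto $\PP_n(C_1\cup C_2)\setminus\PP_{n-1}(C_1\cup C_2)$, \emph{not} onto $\PP_n(C_1\cup C_2)$. The $C_1 \cup C_2$-stable polynomials of degree strictly less than $n$ are simply never reached, so ``$S$ preserves'' does not tell you anything about $T$ acting on them; and one cannot approximate those lower-degree polynomials by degree-$n$ ones, since (as the paper notes before Lemma \ref{lem-mobius-image}) polynomials of exact degree $n$ with zeros in a bounded region are not dense in $\PP_n$ of the complement. This is exactly why Lemma \ref{lem-mobius-image} and Corollary \ref{cor-mobius} are stated for exact degree $n$ on the $T$-side, and why the hypothesis in Corollary \ref{cor-mobius-n} reads ``for each $m\le n$.''

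The remark after Lemma \ref{lem-mobius-image} that you invoke addresses a different issue: it concerns the factor $(-cz+a)^{r(p)}$ that $\phi_m^{-1}$ could introduce on the \emph{output} side, and shows via minimality of $m$ and continuity that $r\equiv 0$. It does nothing to recover the missing input polynomials of degree $<n$. Consequently your statement ``$\PP_n(H_j)$ already includes polynomials of every degree $m\le n$, so `for each $m\le n$' is automatic'' is where the argument goes wrong: the condition at each $m$ must be extracted by applying Corollary \ref{cor-mobius} (equivalently, Lemma \ref{lem-mobius-image} with the conjugating map $\phi_m$, not $\phi_n$) separately for each degree $m\le n$. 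The paper's intended proof does exactly this: it decomposes $\PP_n(C_1\cup C_2)$ into exact-degree pieces $\PP_m\setminus\PP_{m-1}$, applies Corollary \ref{cor-mobius} to each piece, and assembles the resulting conditions. The closed-region clause via Theorem \ref{open-set} is fine in your write-up. To repair the argument, replace the single appeal to $S=\phi_m^{-1}T\phi_n$ with a degree-by-degree application of Corollary \ref{cor-mobius}.
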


\end{document}